\documentclass[pdflatex,10pt]{article}
\usepackage[utf8]{inputenc}

\usepackage{amsmath,color}
\usepackage{amssymb,amsthm}
\usepackage{mathtools}
\usepackage[permil]{overpic}
\usepackage[multiple]{footmisc}
\usepackage{xcolor}
\usepackage{pict2e} 
\usepackage{xr}
\usepackage[left=1.6cm,right=1.6cm,top=2.50cm,bottom=2.50cm]{geometry}
\usepackage{graphicx}
\usepackage[font=small,labelfont=bf,
   justification=justified,
   format=plain,labelsep=space]{caption}
\usepackage{subcaption}
\usepackage{float}
\usepackage[english]{babel}
\usepackage[font=small,labelfont=bf,justification=centering]{caption}
\usepackage[T1]{fontenc}
\usepackage{lastpage}
\usepackage{enumerate}
\usepackage{enumitem}
\usepackage{lmodern}
\usepackage{array}
\usepackage{bm}
\usepackage{multirow}
\usepackage{dsfont}
\usepackage{tensor}
\usepackage{fancyhdr}
\usepackage{listings}
\usepackage{dsfont}
\usepackage{siunitx}
\usepackage{comment}
\usepackage{titling}
\usepackage{lipsum}
\usepackage{tabularx}
\usepackage{verbatim}
\usepackage{authblk}
\usepackage{csquotes}
\usepackage{bbm} 
\usepackage{hyperref} 
\usepackage{ctable} 
\usepackage{multirow} 
\usepackage{titlesec} 
\usepackage{marvosym}
\usepackage{upgreek} 
\usepackage{relsize,exscale} 
\usepackage[
backend=biber,
style=numeric,
giveninits=true,
sorting=anyt
]{biblatex}


\allowdisplaybreaks

\addbibresource{Ref.bib}

\graphicspath{{Fig/}}

\addto\captionsenglish{}

\makeatletter
\newcommand{\thickhline}{%
    \noalign {\ifnum 0=`}\fi \hrule height 1pt
    \futurelet \reserved@a \@xhline
}
\newcolumntype{"}{@{\hskip\tabcolsep\vrule width 1pt\hskip\tabcolsep}}
\makeatother

\usepackage{adjustbox}

\mathtoolsset{showonlyrefs}


\makeatletter
\newsavebox{\@brx}
\newcommand{\llangle}[1][]{\savebox{\@brx}{\(\m@th{#1\langle}\)}%
  \mathopen{\copy\@brx\kern-0.5\wd\@brx\usebox{\@brx}}}
\newcommand{\rrangle}[1][]{\savebox{\@brx}{\(\m@th{#1\rangle}\)}%
  \mathclose{\copy\@brx\kern-0.5\wd\@brx\usebox{\@brx}}}
\makeatother


\relpenalty   = 10000
\binoppenalty = 10000

\captionsetup{justification=justified,font=scriptsize,singlelinecheck=false}
\captionsetup[subfigure]{justification=centering}

\sisetup{output-exponent-marker=\textsc{e}, bracket-negative-numbers, open-bracket={\text{-}}, close-bracket={}}
\lstset{
  basicstyle=\ttfamily,
  columns=fullflexible,
  frame=single,
  breaklines=true,
  postbreak=\mbox{\textcolor{red}{$\hookrightarrow$}\space},
}

\setlength\parindent{0pt}

\newtheorem{deff}{Definition}[section]
\newtheorem{prop}[deff]{Proposition}
\newtheorem{thm}[deff]{Theorem}
\newtheorem{lm}[deff]{Lemma}
\newtheorem{cor}[deff]{Corollary}
\theoremstyle{definition}
\newtheorem{example}[deff]{Example}
\newtheorem{remark}[deff]{Remark}

\title{Warning Signs for Boundary Noise\\ and their Application to an Ocean Boussinesq Model}

\author[1,$\dag$]{P. Bernuzzi}
\author[2]{H. A. Dijkstra}
\author[1,3]{C. Kuehn}
\affil[1]{\footnotesize{Technical University of Munich, School of Computation Information and Technology, Department of
Mathematics, Boltzmannstraße 3, 85748 Garching, Germany}}
\affil[2]{\footnotesize{Institute for Marine and Atmospheric research Utrecht, Department of Physics Utrecht University, Princetonplein 5, 3584 CC Utrecht, The Netherlands}}
\affil[3]{\footnotesize{Technical University of Munich, Munich Data Science Institute, Walther-von-Dyck-Straße 10, 85748 Garching, Germany}}
\affil[$\dag$]{Author to whom any correspondence should be addressed. Email: paolo.bernuzzi@ma.tum.de}

\date{\today}

\begin{document}

\maketitle{}

\begin{abstract}
In this paper, we construct and discuss early-warning signs of the approach of a parameter to a deterministic bifurcation on a stochastic partial differential equation (SPDE) model with Gaussian white-noise on the boundary. We specifically focus on the qualitative behaviour of the time-asymptotic autocovariance and autocorrelation of the solutions of the linearized system. We also discuss the reliability of the tools from an analytic perspective and through various examples. Among those, the application of the early-warning signs for an ocean Boussinesq model is explored through numerical simulations. The analytic results obtained expand on previous work and present valuable early-warning signs for various applications.
\end{abstract}

\small{\textbf{Keywords:} Early-warning signs, SPDEs, scaling law, boundary noise, atlantic meridional overturning circulation, Boussinesq.}

\small{\textbf{MSC codes:} 35R60, 37H20, 60H15.}

\small{\textbf{Acknowledgments:} This project has received funding from the European Union’s Horizon 2020 research and innovation programme under Grant Agreement 956170. The authors are thankful to Sven Baars, Gianmarco Del Sarto and Jelle Soons for their appreciated help.}

\pagestyle{fancy}
\fancyhead{}
\renewcommand{\headrulewidth}{0pt}
\fancyhead[C]{\textit{Warning Signs for Boundary Noise\\ and their Application to an Ocean Boussinesq Model}}

\section{Introduction}

The prediction of sudden events holds great importance in climate science due to the impact of the studied phenomena. The dynamics that describe various events can be modeled through fast-slow stochastic partial differential equations (SPDEs) perturbed by boundary noise. Such choice can be justified in applications as the weak effect of an external component in the model. Much analytic theory for SPDEs with boundary noise has already been developed, with a focus on the existence and structure of the solutions \cite{da1993evolution,da1996ergodicity,sowers1994multidimensional}. Yet, the study of observables that predict sudden qualitative changes induced by crossing a bifurcation threshold has not been considered and we aim to contribute in closing this significant gap.\medskip

The employment of asymptotic variance and autocorrelation as warning signals in climate science is an example of the use of such tools. The assumption of the ergodicity of the system can justify the use of such abstract signals in real-life applications and can be found on data sets associated with the end of the Younger Dryas in the tropical Atlantic \cite{lenton2012early}, the collapse of the Atlantic Meridional Overturning Circulation \cite{boers2021observation,ditlevsen2023warning}, the melting of
the Western Greenland Ice Sheet \cite{boers2021critical} and the impact of habitat destruction on different species \cite{drake2010early}. The corresponding models display abrupt transitions, or hysteresis, which underline the importance of the reliability of the signals. \medskip

The study of fast-slow differential equations with critical transitions is a well-known field in mathematics~\cite{kuehn2015multiple}. The typical dynamics of multiple time scale systems display slow motion for long times interspersed by fast abrupt changes \cite{KU4,KU}. Such behaviours are often associated with the crossing of a bifurcation point of the fast subsystem. The sudden nature of such an event justifies the interest in constructing observables that predict critical transitions. Such signals are referred to as early-warning signs \cite{KU2,kuehn2015early}. An important tool for constructing warning signs from data is critical slowing down~\cite{Wiesenfeld1}, induced by decreased dissipation in the drift term. Such events can be observed in real-life data through small perturbations induced by stochastic components \cite{hasselman1976stochastic}. Inspection of observables associated with such term can present valuable tools to predict critical transitions \cite{dakos2008slowing,ditlevsen2010tipping,kubo1966fluctuation,wissel1984universal}. Following \cite{bernuzzi2023bifurcations,ditlevsen2010tipping,KU2,KU}, we discuss the properties of the time-asymptotic autocovariance and autocorrelation of the solution of the linearized model along different directions in the space of square-integrable space.\medskip

More specifically, we consider differential equations of the form
\begin{align} \label{eq:fastslow}
\left\{\begin{alignedat}{2}
    &\text{d}u(x,t)=\left(F_1(p(t))\; u(x,t)+F_2(u(x,t),p(t))\right)\, \text{d}t &&\qquad \text{for $x\in\mathcal{X}$}, \\ 
    &\Gamma(u (x,t),p(t)) = \sigma_0 B \dot{W} (x,t) &&\qquad \text{for $x\in\partial\mathcal{X}$},\\
    &\text{d}p(t)= \epsilon G(u(\cdot,t),p(t))\, \text{d}t &&\qquad,
\end{alignedat}\right.
\end{align}
with $(x,t) \in \mathcal{X} \times [0, \infty)$ for a connected set $\mathcal{X}\subset\mathbb{R}^N$ with nonempty interior and $N\in\mathbb{N}_{>0}:=\left\{1,2,3,\ldots\right\}$. We suppose that $F_1(p(t))$ is a linear operator for any $t$ and that $F_2$ and $G$ are sufficiently smooth nonlinear maps. We consider $\sigma_0>0$, $0<\epsilon\ll 1$ and with the notation $\dot{W}$ we refer to a Gaussian white-noise process on the boundary of $\mathcal{X}$, whose precise properties are described further below. The operator $\Gamma$ defines the nature of the boundary conditions and $B$ the effect of the noise on such region of space. The solutions $u:\mathcal{X} \times [0, \infty)\to \mathbb{R}$ and $p: [0, \infty)\to \mathbb{R}$ exist almost surely in the Sobolev space of degree $\alpha$, $\mathcal{H}^\alpha(\mathcal{X})$. From now, we denote $u(x,t)$ by $u$ and $p(t)$ as $p$. \medskip

We assume that, for any $p$, there exists the steady state $u_{\ast}$ of the deterministic partial differential equation (PDE) corresponding to \eqref{eq:fastslow} with $\sigma_0=0$. In order to determine the deterministic local stability of $u_{\ast}$, we study the linear operator
\begin{align*}
    A(p) := F_1(p) + \text{D}_{u_{\ast}} F_2(u_{\ast},p) \, ,
\end{align*}
associated to the linear boundary operator $\gamma(p)$, for $\text{D}_u$ that indicates the Fréchet derivative of $u$ on a suitable Banach space described in detail below.\medskip

Suppose that for $p <\lambda$ the spectrum of $A(p)$ restricted on the space $\left\{f\in L^2(\mathcal{X}) \;\big| \;\gamma(p) f \equiv 0\right\}$ is contained in $\left\{ z: \text{Re}(z) < 0\right\}$, whereas for $p=\lambda$ the spectrum has elements in $\left\{ z: \text{Re}(z) = 0\right\}$. Therefore, the fast subsystem, observed under the assumption $\epsilon=0$, has a bifurcation point at $p=\lambda$. In the fast-slow system~\eqref{eq:fastslow}, with $\epsilon>0$, the slow dynamics $\partial_t p = \epsilon G(0,p)$ can drive the solution of the system to the bifurcation point at $p=\lambda$ and induce a critical transition. Setting $\epsilon=0$, the variable $p$ becomes a parameter on which the motion of $u$ depends. Moreover, shifting $p$, it can cross the bifurcation threshold $p=\lambda$ associated to the fast SPDE dynamics described by the first two equations in \eqref{eq:fastslow}.\medskip

For the rest of this work, we consider $\sigma_0=1$, for simplicity, the case $\epsilon = 0$ and assume that $p$ is a parameter. This is the natural starting point as has been shown for many other, less complex, questions for early-warning signs in stochastic systems~\cite{berglund2006noise,kuehn2013mathematical}. Particular focus in this work is on the linearized leading-order fast system
\begin{equation} \label{eq:fast_linear}
\left\{\begin{alignedat}{2}
    &\text{d} u(x,t)= A(p) \; u(x,t) \text{d}t &&\qquad \text{for $x\in\mathcal{X}$},\\
    &\gamma(p)\; u (x,t)= B \dot{W} (x,t) &&\qquad \text{for $x\in\partial\mathcal{X}$} \;.
\end{alignedat}\right.
\end{equation}
We consider the initial conditions of \eqref{eq:fast_linear}, labeled $u_0$, close to the null function and the initial conditions of \eqref{eq:fastslow} equal to $u_0+u_\ast$. The solutions of \eqref{eq:fastslow} and \eqref{eq:fast_linear} present similar qualitative behaviour for large times if $p<\lambda$ and $A(p)$ restricted on $\left\{f\in L^2(\mathcal{X}) \;\big| \;\gamma(p) f \equiv 0\right\}$ has eigenvalues with real part distant from $0$. In particular, we shall always assume the dissipativity of $A(p)$ for $p<\lambda$. Then the linear term of $F_1(p)+F_2(u_\ast,p)$, constrains the solution in a neighbourhood of the initial condition for long times. Therefore, early-warning signs, observing the solution of each system, predict the approach of $p$ to $\lambda$ in a similar manner, up to proximity to the bifurcation threshold~\cite{bernuzzi2023bifurcations,KU2,horsthemke1984noise,kuehn2013mathematical}.\medskip

The introduction of boundary noise extends the use of the early-warning signs to models perturbed stochastically by external forces on the surface of the studied domain. Such effects are associated with wind and salinity-fluxes in ocean models \cite{baars2017continuation,hasselman1976stochastic}. They have also been studied in the heat equation, as additive noise, and Burgers equation, as multiplicative noise \cite{munteanu2019boundary}. The assumption of boundary noise presents several technical difficulties compared to the space-time white noise respective \cite{KU2}. As a result, the time-asymptotic autocovariance is observed in a subset of the square-integrable function space, yet large. The construction of the boundary condition has a twofold role: it affects the restrictions on the assumptions on the noise to ensure existence of the solution of the system, such as in the case of the heat equation with noise on Neumann boundary conditions on the interval \cite{da1993evolution,da1996ergodicity}; the interplay between the linear operator $A$ associated to the drift component, the operator $\gamma$ that defines the linearized boundary conditions, $B$ that indicates the effect of the noise and their dependence on $p$ can affect or silence the variance as an early-warning sign, in the sense that it can change rate of divergence of the observable as $p$ approaches $\lambda$. The silencing of such early-warning signs can also be observed under the assumption of white noise in space and time, yet the role of $\gamma$ and $B$ is not as relevant as in the case discussed \cite{bernuzzi2023bifurcations,KU2}.
\medskip

The paper is structured as follows: Section \ref{sec:Preliminaries} describes the assumptions of our study. The analytic construction of the early-warning signs is presented in Section \ref{sec:EWS}. Examples of their application to specific models, along with numerical cross-validation of the results, are discussed in Section \ref{sec:examples}. 

\section{Preliminaries} \label{sec:Preliminaries}

Consider \eqref{eq:fast_linear} for $u(\cdot,0)=u_0(\cdot)$ and $\lambda_0\leq p\leq\lambda$. We label as $\mathcal{D}$ the domain of an operator; we also denote as $\mathcal{H}:=L^2(\mathcal{X})$ the complex Hilbert space of square-integrable functions with domain $\mathcal{X}$ and whose inner product is labeled as $\left\langle\cdot,\cdot\right\rangle$. We define the linear operator, dependent on the parameter $p$,
\begin{equation*}
    A(p):\mathcal{D}(A(p))\subseteq \mathcal{H} \mapsto \mathcal{H} \;.
\end{equation*} 
We introduce the cylindrical Wiener process following the definition in \cite{DP}. We set the time ${t_\text{end}}>0$ and the probability space $(\Omega, \mathcal{F}, \mathbb{P})$ for $\Omega$ the space of paths $\left\{ P(t) \right\}_{t\in[0,{t_\text{end}}]}$ such that $P(t)\in L^2(\partial\mathcal{X})$ for all $t\in[0,{t_\text{end}}]$, the Borel $\sigma$-algebra $\mathcal{F}$ on $\Omega$ and a Wiener measure $\mathbb{P}$ induced by a Wiener process on $[0,{t_\text{end}}]$. We associate to such probability space the natural filtration $\mathcal{F}_t$ of $\mathcal{F}$ with respect to a Wiener process and $t\in[0,{t_\text{end}}]$. The cylindrical Wiener process is defined as
\begin{equation} \label{cylindrical_WP}
    W(t)=\sum_{i=1}^\infty b_i \beta_i (t)
\end{equation}
for an orthonormal basis $\left\{b_i\right\}_{i\in\mathbb{N}_{>0}}$ of $L^2(\partial\mathcal{X})$ and $\left\{\beta_i\right\}_{i\in\mathbb{N}_{>0}}$ a family of independent, identically distributed and $\mathcal{F}_t\text{-adapted}$ Brownian motions. There exists hence (\cite[Proposition 4.11]{DP}) a larger space $\tilde{H}$ on which $L^2(\partial \mathcal{X})$ embeds through the Hilbert-Schmidt embedding and for which the series in \eqref{cylindrical_WP} converges in $L^2(\Omega,\mathcal{F},\mathbb{P};\tilde{H})$ for any $t\in[0,{t_\text{end}}]$. \\
The Gaussian white-noise process is labeled as $\dot{W}(x,t)$ and satisfies 
\begin{align*}
    &\mathbb{E}[\dot{W}(x,t)]=0\;,\\
    &\mathbb{E}[\dot{W}(x_1,t_1)\dot{W}(x_2,t_2)]=\delta(x_1-x_2,t_1-t_2)\;,
\end{align*}
for any $x,x_1,x_2\in\partial \mathcal{X}$, any $t,t_1,t_2\geq 0$, for $\mathbb{E}$ the mean value induced by $\mathbb{P}$; $\delta$ denotes the Kronecker delta. The initial condition $u_0\in \mathcal{D}(A(p))$ is assumed to be $\mathcal{F}_0$-measurable. The linear operator
\begin{equation*}
    \gamma(p):\mathcal{D}(\gamma(p))\subseteq \mathcal{H}\mapsto L^2(\partial\mathcal{X})
\end{equation*}
defines the boundary conditions on $\partial\mathcal{X}$ and the bounded linear operator
\begin{equation*}
    B:\mathcal{D}(B)\subseteq L^2(\partial\mathcal{X})\mapsto L^2(\partial\mathcal{X})
\end{equation*}
describes the effect of the noise on the boundary. We define $B^*$ as the adjoint operator of $B$ with respect to the scalar product on the complex Hilbert space $L^2(\partial\mathcal{X})$. We assume the linear operator $A_0$ such that
\begin{equation*}
    A_0(p) v= A(p) v \quad \text{for any } v\in \mathcal{D}(A_0(p))=\mathcal{D}(A(p))\cap \mathcal{D}(\gamma(p))\cap\left\{\gamma(p)\; v(x)=0 \text{ for } x\in\partial
    \mathcal{X}\right\}\;,
\end{equation*}
to have all eigenvalues with negative real part for $p<\lambda\in\mathbb{R}$ and to admit an eigenvalue with zero real part for $p=\lambda$. The operator $A_0(p)$ generates a $C_0-$semigroup on $\mathcal{H}$, $S(t)=e^{A_0(p)\, t}$ for $t\geq0$ and $p<\lambda$. We assume that there exists a constant $c(p)\in\mathbb{R}$ such that $\left(A_0(p)+c(p)\right)^{-1}$, for $p\leq\lambda$, is compact. This entails that the spectrum of $A_0(p)$ is discrete and labeled as $\left\{\lambda_i^{(p)}\right\}_{i\in\mathbb{N}_{>0}}$. For simplicity we assume that $\lambda_1^{(p)}$ is the unique eigenvalue of $A_0(p)$ such that $\operatorname{Re}\left(\lambda_1^{(p)}\right)\to 0^-$ as $p\to\lambda^-$, with $\operatorname{Re}$ that indicates the real part operator. For any $p\leq\lambda$, we refer to $A_0(p)^*$ as the adjoint operator of $A_0(p)$ in respect to the scalar product on the Hilbert space $\mathcal{H}$. We also indicate as $\overline{z}$ the conjugate of the scalar $z\in\mathbb{C}$. The eigenvalues of $A_0(p)^*$ are, hence, $\left\{\overline{\lambda_i^{(p)}}\right\}_{i\in\mathbb{N}_{>0}}$. The operators $A_0(p)$ and $A_0(p)^*$ are assumed to be closed and densely defined in $\mathcal{H}$. We use $m_a$ to indicates the algebraic multiplicity of an eigenvalue. For any $i\in\mathbb{N}_{>0}$, $p\leq\lambda$ and $k\in\left\{1,\dots,m_a\left(\lambda_i^{(p)}\right)\right\}$, the generalized eigenfunctions of $A_0(p)$ and $A_0(p)^*$  corresponding respectively to $\lambda_i^{(p)}$ and $\overline{\lambda_i^{(p)}}$ are labeled as $e_{i,k}^{(p)}$ and ${e_{i,k}^{(p)}}^*$ and satisfy
\begin{align*}
\begin{alignedat}{3}
    & A_0(p) e_{i,k}^{(p)} = \lambda_i^{(p)} e_{i,k}^{(p)},
    && \quad A_0(p)^* {e_{i,k}^{(p)}}^* = \overline{\lambda_k^{(p)}} {e_{i,1}^{(p)}}^*,
    &&\qquad \text{for $k=1$},\\
    & A_0(p) e_{i,k}^{(p)} = \lambda_i^{(p)} e_{i,k}^{(p)} + e_{i,k-1}^{(p)},
    && \quad A_0(p)^* {e_{i,k}^{(p)}}^* = \overline{\lambda_i^{(p)}} {e_{i,k}^{(p)}}^* + {e_{i,k-1}^{(p)}}^*,
    &&\qquad \text{for $k \neq 1$}.
\end{alignedat}
\end{align*}
We assume that such functions are continuous in $\mathcal{H}$ with regard to $p$. The deterministically invariant subspaces generated by the generalized eigenfunctions of $A_0(p)$ and $A_0(p)^*$ associated to the eigenvalue $\lambda_i^{(p)}$ and $\overline{\lambda_i^{(p)}}$ are denoted respectively as $E_i(p)$ and $E_i(p)^*$. Their dimension is labeled as $M_i=m_a\left(\lambda_i^{(p)}\right)$ and is assumed to be independent of $p$. For each $i\in\mathbb{N}_{>0}$, the sets $\left\{e_{i,k}^{(p)}\right\}_{k\in\left\{1,\dots,M_i\right\}}$ and $\left\{{e_{i,M_i-k+1}^{(p)}}^*\right\}_{k\in\left\{1,\dots,M_i\right\}}$ can be chosen to form a biorthogonal system. It follows that, for any $p\leq\lambda$ and $v\in E_i(p)^*$, it holds
\begin{equation} \label{eq:coeff}
    v=\sum_{k=1}^{M_i} \left\langle v, e_{i,M_i-k+1}^{(p)} \right\rangle {e_{i,k}^{(p)}}^*
\end{equation}
and
\begin{equation} \label{eq:coeff_2}
    A_0(p)^* v = \overline{\lambda_i^{(p)}} \left\langle v, e_{i,M_i}^{(p)} \right\rangle {e_{i,1}^{(p)}}^* + \sum_{k=2}^{M_i} \left\langle v, e_{i,M_i-k+1}^{(p)} \right\rangle \left( \overline{\lambda_i^{(p)}} {e_{i,k}^{(p)}}^* + {e_{i,k-1}^{(p)}}^* \right) \quad.
\end{equation}
We also assume that, for fixed $p\leq\lambda$, there exists a continuous $q=q(p)\in\mathbb{R}$ such that for any boundary value problem
\begin{equation*}
    (A(p)-q)w=0 \quad , \quad \gamma(p)\; w=v\;,
\end{equation*}
with $v\in L^2(\partial\mathcal{X})$, there exists a unique solution $w=D(p)v\in\mathcal{D}(A(p))\subseteq \mathcal{H}$. For any $p\leq\lambda$, we label $D(p)^*$ as the adjoint operator of $D(p)$ in respect to the scalar products on the complex Hilbert spaces $\mathcal{H}$ and $L^2(\partial\mathcal{X})$. We assume the operator $D(p)^*$ to be uniformly bounded in $L^2(\mathcal{X};L^2(\partial \mathcal{X}))$ for any $p\leq\lambda$. Therefore, there exists $C^+>0$ such that
\begin{equation} \label{eq:request3}
    \left\lvert\left\langle v, D(p) B B^* D(p)^* w \right\rangle\right\rvert < C^+
\end{equation}
for any $v,w\in \mathcal{H}$. From \cite{balakrishnan2012applied,da1993evolution,ichikawa1985semigroup} it follows that the solution of \eqref{eq:fast_linear} is
\begin{equation} \label{eq:solution}
    u(x,t)=e^{A_0(p) t} u_0(x) + (q-A_0(p))\int_0^t e^{A_0(p) (t-s)} D(p) B \text{d}W_s \in \mathcal{H}
\end{equation}
for $t\in[0,{t_\text{end}}]$ if and only if
\begin{equation*}
    \int_0^t \left\lvert B^* D(p)^* e^{A_0(p)^* s} (A_0(p)^*-q) \right\rvert_{L^2(\mathcal{X};L^2(\partial\mathcal{X}))}^2 \text{d}s<+\infty\;,
\end{equation*}
or equivalently if and only if
\begin{equation} \label{eq:condition}
    \int_0^t \left\lvert (A_0(p)-q) e^{A_0(p) s} D(p) B \right\rvert_{L^2(\partial \mathcal{X};\mathcal{H})}^2 \text{d}s<+\infty\;.
\end{equation}
Under condition \eqref{eq:condition} for any ${t_\text{end}}>0$ and through Ito isometry, the covariance operator of $u$, at time $t\in[0,{t_\text{end}}]$, is therefore
\begin{equation*}
    V_t:=\int_0^t (A_0(p)-q) e^{A_0(p) s} D(p) B B^* D(p)^* e^{A_0(p)^* s} (A_0(p)^*-q) \text{d}s\;.
\end{equation*}
Lastly, we label the time-asymptotic covariance operator as $V_\infty=\underset{t\to\infty}{\lim} V_t$.

\medskip

Section \ref{sec:EWS} below describes the construction of the early-warning signs, as the parameter $p$ approaches a bifurcation threshold, in the form of the rate of divergence of the time-asymptotic autocovariance, of specific observables, and also as the quantitative behaviour of the time-asymptotic autocorrelation along specific directions in $\mathcal{H}$. In order to describe the scaling law of the observables, we employ the big theta notation \cite{knuth1976big}, $\Theta$, i.e., $b_1(p)=\Theta\left(b_2(p)\right)$ if $\underset{p\to \lambda^-}{\lim} \frac{b_1(p)}{b_2(p)}>0$ and $\underset{p\to \lambda^-}{\lim} \frac{b_2(p)}{b_1(p)}>0$ for $b_1,b_2$ locally positive functions.
Section \ref{sec:examples} provides different examples of models on which the early-warning signs predict a bifurcation event.

\section{Construction of the early-warning signs} \label{sec:EWS}

In this section we assume that \eqref{eq:condition} holds for any ${t_\text{end}}>0$ and $p<\lambda$. We construct early-warning signs able to predict the approach of $p$ to $\lambda$ and the corresponding changes ensuing in \eqref{eq:fast_linear}.

\begin{prop} \label{prop:auto_cov}
For any $p<\lambda$, the time-asymptotic autocovariance operator of lag time $\tau\geq0$, defined by the solution \eqref{eq:solution} of \eqref{eq:fast_linear},
\begin{align*}
    & \underset{t\to\infty}{\lim} V_t^\tau \;,
\end{align*}
and such that $V_t^\tau$ satisfies
\begin{align*}
    & \left\langle v, V_t^\tau w\right\rangle= \text{Cov}(\left\langle u(\cdot,t+\tau), v \right\rangle , \left\langle u(\cdot,t), w \right\rangle)
\end{align*}
for any $v,w\in\mathcal{D}\left(A_0(p)^*\right)$ and $t\geq0$, is the operator
\begin{align} \label{eq:tool_for_autocorrelation}
    & V_\infty^\tau= e^{A_0(p) \tau} V_\infty: \mathcal{D}(A_0(p))\to \mathcal{H} \;.
\end{align}
\end{prop}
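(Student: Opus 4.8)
The plan is to reduce the lag-$\tau$ autocovariance to the operator $V_t$ already computed in the preliminaries, exploiting the semigroup law together with the martingale structure of the stochastic convolution in \eqref{eq:solution}. Write $u(\cdot,t)=e^{A_0(p)t}u_0+Z(t)$ with $Z(t):=(q-A_0(p))\int_0^t e^{A_0(p)(t-s)}D(p)B\,\text{d}W_s$, the second summand being the only stochastic contribution; it has mean zero and is $\mathcal{F}_t$-measurable. Since $u_0$ is $\mathcal{F}_0$-measurable and uncorrelated with the increments of $W$, the deterministic term drops out of every covariance, so that $\langle v,V_t^\tau w\rangle=\text{Cov}(\langle Z(t+\tau),v\rangle,\langle Z(t),w\rangle)$ for $v,w\in\mathcal{D}(A_0(p)^*)$.

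Next I would split the integral defining $Z(t+\tau)$ at time $t$, writing $\int_0^{t+\tau}=\int_0^t+\int_t^{t+\tau}$. On $[0,t]$ the semigroup identity gives $e^{A_0(p)(t+\tau-s)}=e^{A_0(p)\tau}e^{A_0(p)(t-s)}$; since $e^{A_0(p)\tau}$ is bounded and commutes with $q-A_0(p)$ on $\mathcal{D}(A_0(p))$, while the integrand $(A_0(p)-q)e^{A_0(p)(t-s)}D(p)B$ is, by \eqref{eq:condition}, already a well-defined Hilbert-Schmidt operator for almost every $s<t$, the $[0,t]$-part of $Z(t+\tau)$ equals $e^{A_0(p)\tau}Z(t)$ after pulling the fixed bounded operator $e^{A_0(p)\tau}$ out of the stochastic integral. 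The remaining part $R:=(q-A_0(p))\int_t^{t+\tau}e^{A_0(p)(t+\tau-s)}D(p)B\,\text{d}W_s$ is measurable with respect to the increments of $W$ on $[t,t+\tau]$, hence independent of $\mathcal{F}_t$ and so of $Z(t)$, and has mean zero; therefore $\text{Cov}(\langle R,v\rangle,\langle Z(t),w\rangle)=0$, leaving $\langle v,V_t^\tau w\rangle=\text{Cov}(\langle e^{A_0(p)\tau}Z(t),v\rangle,\langle Z(t),w\rangle)$.

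It then remains to identify this residual term with $V_t$. Moving $e^{A_0(p)\tau}$ to the other slot of the inner product, $\langle e^{A_0(p)\tau}Z(t),v\rangle=\langle Z(t),e^{A_0(p)^*\tau}v\rangle$, and since $e^{A_0(p)^*\tau}$ leaves $\mathcal{D}(A_0(p)^*)$ invariant, the characterization of $V_t$ as the covariance operator of $Z(t)$ (the Ito-isometry identity in the preliminaries) applied to the test pair $(e^{A_0(p)^*\tau}v,w)$ gives $\text{Cov}(\langle Z(t),e^{A_0(p)^*\tau}v\rangle,\langle Z(t),w\rangle)=\langle e^{A_0(p)^*\tau}v,V_t w\rangle=\langle v,e^{A_0(p)\tau}V_t w\rangle$. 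Hence $V_t^\tau=e^{A_0(p)\tau}V_t$ in the weak sense tested on $\mathcal{D}(A_0(p)^*)$, and letting $t\to\infty$ — legitimate since $e^{A_0(p)\tau}$ is bounded and $V_\infty=\lim_{t\to\infty}V_t$ exists by the dissipativity of $A_0(p)$ for $p<\lambda$ — yields $V_\infty^\tau=e^{A_0(p)\tau}V_\infty$ on $\mathcal{D}(A_0(p))$.

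I expect the main obstacle to be the operator-theoretic bookkeeping around the unbounded factor $q-A_0(p)$: one must verify that the factorization $e^{A_0(p)(t+\tau-s)}=e^{A_0(p)\tau}e^{A_0(p)(t-s)}$ survives being multiplied by $q-A_0(p)$ and integrated, without leaving the space in which \eqref{eq:condition} guarantees the stochastic integral exists, and that $e^{A_0(p)^*\tau}$ genuinely preserves $\mathcal{D}(A_0(p)^*)$ so that testing against $v$ is admissible. By contrast, the probabilistic steps — independence of the future increments from $\mathcal{F}_t$, the Ito isometry, and extracting the bounded operator $e^{A_0(p)\tau}$ from the stochastic integral — are routine once this functional-analytic framework is in place.
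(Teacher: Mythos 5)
Your proposal is correct and follows essentially the same route as the paper's proof: drop the deterministic term, split the stochastic convolution at time $t$ so the increment over $[t,t+\tau]$ decorrelates, use the semigroup law to transfer $e^{A_0(p)\tau}$ onto $v$ as $e^{A_0(p)^*\tau}$, apply the Ito isometry to recover $\langle v, e^{A_0(p)\tau}V_t w\rangle$, and let $t\to\infty$. You merely make explicit (independence of future increments, invariance of $\mathcal{D}(A_0(p)^*)$ under $e^{A_0(p)^*\tau}$) what the paper leaves implicit in passing between its displayed lines.
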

\begin{proof}
Set $v,w\in\mathcal{D}\left(A_0(p)^*\right)$, then
\begin{align*}
    & \text{Cov}\left(\left\langle u(\cdot,t+\tau), v \right\rangle , \left\langle u(\cdot,t), w \right\rangle\right) \\
    & = \mathbb{E}\left( \left\langle \int_0^{t+\tau} (q-A_0(p)) e^{A_0(p) (t+\tau-s)} D(p) B \text{d}W_s, v \right\rangle 
    \overline{\left\langle \int_0^{t} (q-A_0(p)) e^{A_0(p) (t-s)} D(p) B \text{d}W_s, w \right\rangle} \right) \\
    & = \mathbb{E}\left( \left\langle \int_0^{t} (q-A_0(p)) e^{A_0(p) (t-s)} D(p) B \text{d}W_s, e^{A_0(p)^* \tau} v \right\rangle 
    \overline{\left\langle \int_0^{t} (q-A_0(p)) e^{A_0(p) (t-s)} D(p) B \text{d}W_s, w \right\rangle} \right) \\
    & = \int_0^{t} \left\langle B^* D(p)^* e^{A_0(p)^* (t-s)} (A_0(p)^*-q) e^{A_0(p)^* \tau} v, B^* D(p)^* e^{A_0(p)^* (t-s)} (A_0(p)^*-q) w \right\rangle_{L^2(\partial \mathcal{X})} \text{d}s
    = \left\langle v, e^{A_0(p) \tau} V_{t} w \right\rangle \;.
\end{align*}
Through the limit $t\to\infty$ the proof is concluded.
\end{proof}

Having constructed the time-asymptotic autocovariance operator, we prove that it satisfies a generalized Lyapunov equation.

\begin{lm} \label{lm:lyap}
We set $\tau\geq0$. For any $p<\lambda$, the time-asymptotic autocovariance operator $V_\infty^\tau$ is a solution in $\mathcal{L}_b(\mathcal{H})$, the space of bounded linear operators in $\mathcal{H}$, of the generalized Lyapunov equation
\begin{equation} \label{eq:lyap}
    \left\langle v, A_0(p) V_\infty^\tau w \right\rangle + \left\langle v, V_\infty^\tau A_0(p)^* w \right\rangle = - \left\langle v, e^{A_0(p) \tau} (A_0(p)-q) D(p) B B^* D(p)^* (A_0(p)^*-q) w \right\rangle\;,
\end{equation}
with $v,w\in \mathcal{D}(A_0(p)^*)$.
\end{lm}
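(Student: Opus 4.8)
The plan is to start from the explicit formula $V_\infty^\tau = e^{A_0(p)\tau} V_\infty$ established in Proposition~\ref{prop:auto_cov}, and to reduce the claimed generalized Lyapunov equation for $V_\infty^\tau$ to the corresponding ($\tau=0$) statement for $V_\infty$, which in turn follows by differentiating the finite-time covariance $V_t$. Concretely, for fixed $v,w\in\mathcal{D}(A_0(p)^*)$ I would first write
\begin{align*}
    \langle v, A_0(p) V_t w\rangle + \langle v, V_t A_0(p)^* w\rangle
    &= \langle A_0(p)^* v, V_t w\rangle + \langle V_t^* v, A_0(p)^* w\rangle,
\end{align*}
using that $V_t$ is self-adjoint and bounded and that $v,w$ lie in the domain of $A_0(p)^*$, so that $A_0(p)^*v$ and $A_0(p)^*w$ are legitimate arguments. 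Then I would compute $\frac{\text{d}}{\text{d}s}\langle v, e^{A_0(p)s}(A_0(p)-q)D(p)BB^*D(p)^* e^{A_0(p)^*s}(A_0(p)^*-q)w\rangle$ and recognize, via the integrand defining $V_t$, that
\begin{align*}
    \langle A_0(p)^* v, V_t w\rangle + \langle v, V_t A_0(p)^* w\rangle
    = \langle v, (A_0(p)-q) e^{A_0(p)t} D(p)BB^*D(p)^* e^{A_0(p)^*t}(A_0(p)^*-q) w\rangle
    - \langle v, (A_0(p)-q)D(p)BB^*D(p)^*(A_0(p)^*-q) w\rangle,
\end{align*}
i.e. the fundamental theorem of calculus applied to the integral in $V_t$. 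Since $A_0(p)$ generates an exponentially stable $C_0$-semigroup (all eigenvalues of $A_0(p)$ have negative real part for $p<\lambda$, and the resolvent is compact so the spectrum is discrete), the boundary term at $t$ vanishes as $t\to\infty$, leaving the $\tau=0$ Lyapunov identity $\langle v, A_0(p)V_\infty w\rangle + \langle v, V_\infty A_0(p)^* w\rangle = -\langle v, (A_0(p)-q)D(p)BB^*D(p)^*(A_0(p)^*-q)w\rangle$.

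To pass to general $\tau\ge 0$, I would insert $V_\infty^\tau = e^{A_0(p)\tau}V_\infty$ and use that $e^{A_0(p)\tau}$ commutes with $A_0(p)$ on $\mathcal{D}(A_0(p))$, so that $A_0(p)V_\infty^\tau w = e^{A_0(p)\tau} A_0(p) V_\infty w$ and, taking adjoints against $v$, $\langle v, V_\infty^\tau A_0(p)^* w\rangle = \langle e^{A_0(p)^*\tau} v, V_\infty A_0(p)^* w\rangle$. Applying the $\tau=0$ identity with $v$ replaced by $e^{A_0(p)^*\tau}v$ and then moving $e^{A_0(p)^*\tau}$ back onto the right-hand side as $e^{A_0(p)\tau}$ yields exactly \eqref{eq:lyap}. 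Finally, boundedness of $V_\infty^\tau$ in $\mathcal{L}_b(\mathcal{H})$ follows from the uniform bound \eqref{eq:request3} together with exponential stability of the semigroup, which makes $\int_0^\infty \lvert (A_0(p)-q)e^{A_0(p)s} D(p) B\rvert^2\,\text{d}s$ finite and hence $V_\infty$, and thus $e^{A_0(p)\tau}V_\infty$, bounded.

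The main obstacle I anticipate is the domain bookkeeping: $A_0(p)$ and $q-A_0(p)$ are unbounded, so the manipulations "$\langle v, A_0(p) V_t w\rangle = \langle A_0(p)^* v, V_t w\rangle$" and the differentiation under the integral sign defining $V_t$ must be justified — one needs that $V_t$ maps into $\mathcal{D}(A_0(p))$, that $(q-A_0(p))e^{A_0(p)s}$ is well-defined on the relevant range (which is where the factor $(A_0(p)-q)$ and the solvability assumption for $(A(p)-q)w=0,\ \gamma(p)w=v$ enter), and that the resulting integrand is integrable so Leibniz's rule applies. The integrability is precisely condition \eqref{eq:condition}, assumed throughout Section~\ref{sec:EWS}, and the regularizing semigroup $e^{A_0(p)s}$ for $s>0$ handles the smoothing; I would state these as the points requiring care and otherwise treat the algebra as routine.
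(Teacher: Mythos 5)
Your proposal is correct and is essentially the paper's argument in a slightly reorganized form: the paper performs the same computation as your product-rule/fundamental-theorem-of-calculus step, but phrased as an integration by parts in $s$ applied directly to the integral representation of $V_\infty^\tau$ with the factor $e^{A_0(p)\tau}$ carried along, the boundary term at $s=0$ producing the right-hand side of \eqref{eq:lyap} and the decay of the semigroup killing the term at infinity. Your additional reduction to the $\tau=0$ case via $V_\infty^\tau=e^{A_0(p)\tau}V_\infty$ and the substitution $v\mapsto e^{A_0(p)^*\tau}v$ is harmless (the semigroup preserves $\mathcal{D}(A_0(p)^*)$) but not needed, and the domain-bookkeeping caveats you flag are present to the same extent in the paper's own proof.
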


\begin{proof}
 The proof follows the method described in \cite[Lemma 2.45]{da2004kolmogorov}. We fix $p<\lambda$. We prove that $V_\infty^\tau$ solves the generalized Lyapunov equation for any $v,w\in\mathcal{D}(A_0(p)^*)$. Through integration by parts and the fact that $A_0(p)^*$ and $A_0(p)^*-q$ commute for any $q\in\mathbb{R}$ we obtain
\begin{align*}
    \left\langle v, A_0(p) V_\infty^\tau w \right\rangle &= \int_0^\infty \left\langle A_0(p)^* v, e^{A_0(p) \tau} (A_0(p)-q) e^{A_0(p) s} D(p) B B^* D(p)^* e^{A_0(p)^* s} (A_0(p)^*-q) w \right\rangle \text{d}s\\
    &= \int_0^\infty \left\langle \frac{\text{d}}{\text{d}s} e^{A_0(p)^* s} v, e^{A_0(p) \tau} (A_0(p)-q) D(p) B B^* D(p)^* e^{A_0(p)^* s} (A_0(p)^*-q) w \right\rangle \text{d}s\\
    &= \left\langle e^{A_0(p)^* s} v, e^{A_0(p) \tau} (A_0(p)-q) D(p) B B^* D(p)^* e^{A_0(p)^* s} (A_0(p)^*-q) w \right\rangle \Bigg\rvert_0^\infty\\
    &- \int_0^\infty \left\langle e^{A_0(p)^* s} v, e^{A_0(p) \tau} (A_0(p)-q) D(p) B B^* D(p)^* e^{A_0(p)^* s} A_0(p)^* (A_0(p)^*-q) w \right\rangle \text{d}s\\
    &= - \left\langle v, e^{A_0(p) \tau} (A_0(p)-q) D(p) B B^* D(p)^* (A_0(p)^*-q) w \right\rangle - \left\langle v, V_\infty^\tau A_0(p)^* w \right\rangle \;.
\end{align*}
We have therefore proven that $V_\infty^\tau$ is a solution of \eqref{eq:lyap}.
\end{proof}

The following theorem introduces an early-warning sign for $p\to\lambda^-$, and therefore for the change of sign of some eigenvalues of $A_0(p)$, as the divergence of $V_\infty^\tau$ along certain directions in $\mathcal{H}$. The proof follows methods described in \cite{bernuzzi2023early,bernuzzi2023bifurcations,KU2,KU}.

\begin{thm} \label{thm:general}
We set $\tau\geq0$.
\begin{itemize}
\item[a)]
For any $i,j\in\mathbb{N}_{>0}$ it holds
\begin{equation} \label{eq:general_EWS}
    \left\langle {e_{i,1}^{(p)}}^*, V_\infty^\tau {e_{j,1}^{(p)}}^* \right\rangle =-\frac{\left(\overline{\lambda_i^{(p)}}-q\right)\left(\lambda_j^{(p)}-q\right)}{
    \left(\overline{\lambda_i^{(p)}}+\lambda_j^{(p)}\right)} e^{\overline{\lambda_i^{(p)}}\tau}
    \left\langle {e_{i,1}^{(p)}}^*,  D(p) B B^* D(p)^* {e_{j,1}^{(p)}}^* \right\rangle 
    = e^{\overline{\lambda_i^{(p)}}\tau} \left\langle {e_{i,1}^{(p)}}^*, V_\infty {e_{j,1}^{(p)}}^* \right\rangle  \;.
\end{equation}

\item[b)]
For $i,j\in\mathbb{N}_{>0}$, we assume that there exist $C^->0$ and $q=q(p)$ that satisfy
\begin{equation} \label{eq:request1}
    \left\lvert \lambda_i -q \right\rvert > C^- \; , \; \left\lvert \lambda_j -q \right\rvert > C^-
\end{equation}
and
\begin{equation} \label{eq:request2}
    \left\lvert\left\langle {e_{i,1}^{(p)}}^*,  D(p) B B^* D(p)^* {e_{j,1}^{(p)}}^* \right\rangle\right\rvert > C^-
\end{equation}
for any $p\leq \lambda$. Then, for $i=j=1$, it holds
\begin{equation*} 
    \underset{p\to\lambda^-}{\lim} \left\lvert\left\langle {e_{1,1}^{(p)}}^*, V_\infty^\tau {e_{1,1}^{(p)}}^* \right\rangle \right\rvert = +\infty \;,
\end{equation*}
and for $(i,j)\in\mathbb{N}_{>0}\times\mathbb{N}_{>0}\setminus(1,1)$, it follows
\begin{equation} \label{eq:thm1_b}
    \left\lvert\left\langle {e_{i,1}^{(p)}}^*, V_\infty^\tau {e_{j,1}^{(p)}}^* \right\rangle \right\rvert
    = \Theta\left(
    1 \right) \;,
\end{equation}
as $p\to\lambda^-$.

\item[c)]

Denoting $e_{i,0}=e_{j,0}\equiv0$ and $k_1\in\left\{1,\dots,M_i\right\},k_2\in\left\{1,\dots,M_j\right\}$, it holds
\begin{align} \label{eq:thm1_1}
    &\left\langle {e_{i,k_1}^{(p)}}^*, V_\infty^\tau {e_{j,k_2}^{(p)}}^* \right\rangle
    = - \frac{ \left\langle {e_{i,k_1}^{(p)}}^*, V_\infty^\tau {e_{j,k_2-1}^{(p)}}^* \right\rangle
    + \left\langle {e_{i,k_1-1}^{(p)}}^*, V_\infty^\tau {e_{j,k_2}^{(p)}}^* \right\rangle}
    {\left(\overline{\lambda_i^{(p)}}+\lambda_j^{(p)}\right)} \nonumber \\
    & - \frac{e^{\overline{\lambda_i^{(p)}} \tau} \underset{k=1}{\overset{k_1}{\sum}} \frac{\tau^{k_1-k}}{(k_1-k)!} \left\langle \left((\overline{\lambda_i^{(p)}}-q)  {e_{i,k}^{(p)}}^*+ {e_{i,k-1}^{(p)}}^*\right), D(p) B B^* D(p)^* \left((\overline{\lambda_j^{(p)}}-q) {e_{j,k_2}^{(p)}}^*+{e_{j,k_2-1}^{(p)}}^*\right) \right\rangle}
    {\left(\overline{\lambda_i^{(p)}}+\lambda_j^{(p)}\right)}  \nonumber 
\end{align}
for any $i,j\in\mathbb{N}_{>0}$.

\item[d)]
We fix $i,j\in\mathbb{N}_{>0}$, $k_1\in\left\{1,\dots,M_i\right\}$ and $k_2\in\left\{1,\dots,M_j\right\}$. We assume that there exists $C^->0$ and $q=q(p)$ that satisfy \eqref{eq:request1} and \eqref{eq:request2} for any $p\leq \lambda$. Then
\begin{equation} \label{eq:thm1_d}
    \left\lvert \left\langle {e_{i,k_1}^{(p)}}^*, V_\infty^\tau {e_{j,k_2}^{(p)}}^*\right\rangle \right\rvert 
    = \Theta\left(
    \left\lvert \left(\overline{\lambda_i^{(p)}}+\lambda_j^{(p)}\right)^{-k_1-k_2+1}\right\rvert\right)
\end{equation}
as $p\to\lambda^-$.
\end{itemize}
\end{thm}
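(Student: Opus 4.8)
Items (a) and (c) are exact identities; I would obtain them by inserting the generalized eigenfunctions of $A_0(p)^*$ into the generalized Lyapunov equation \eqref{eq:lyap} of Lemma \ref{lm:lyap}, reading $\langle v,V_\infty^\tau w\rangle$ for $v,w\in\mathcal{D}(A_0(p)^*)$ as the sesquilinear form of Proposition \ref{prop:auto_cov} so that Lemma \ref{lm:lyap} applies verbatim. Items (b) and (d) are then asymptotic consequences: one takes moduli in the formulas of (a) and (c) and uses the standing bounds \eqref{eq:request1}, \eqref{eq:request2}, \eqref{eq:request3} together with continuity of the eigenvalues, eigenfunctions and of $q(p)$ on the compact interval $[\lambda_0,\lambda]$ (which makes these quantities uniformly bounded and bounds $|e^{\overline{\lambda_i^{(p)}}\tau}|=e^{\operatorname{Re}(\lambda_i^{(p)})\tau}$ between a positive constant and $1$), observing that $\operatorname{Re}(\overline{\lambda_i^{(p)}}+\lambda_j^{(p)})\to 0$ as $p\to\lambda^-$ exactly when $i=j=1$, since $\lambda_1^{(p)}$ is the only eigenvalue approaching the imaginary axis.

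\textbf{Parts (a) and (b).} I would test \eqref{eq:lyap} with $v={e_{i,1}^{(p)}}^*$ and $w={e_{j,1}^{(p)}}^*$. Being genuine eigenfunctions, $A_0(p)^*v=\overline{\lambda_i^{(p)}}v$ and $A_0(p)^*w=\overline{\lambda_j^{(p)}}w$; moving $A_0(p)$ onto $v$ via the adjoint in the first summand, and noting $A_0(p)^*w=\overline{\lambda_j^{(p)}}w$ contributes $\lambda_j^{(p)}$ in the second summand, collapses the left side of \eqref{eq:lyap} to $(\overline{\lambda_i^{(p)}}+\lambda_j^{(p)})\langle v,V_\infty^\tau w\rangle$. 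On the right side, $(A_0(p)^*-q)w$ supplies a factor $\lambda_j^{(p)}-q$ (conjugation from the second argument, $q$ real), and transferring $e^{A_0(p)\tau}(A_0(p)-q)$ onto $v$ through its adjoint $(A_0(p)^*-q)e^{A_0(p)^*\tau}$, with $e^{A_0(p)^*\tau}v=e^{\overline{\lambda_i^{(p)}}\tau}v$, supplies a factor $e^{\overline{\lambda_i^{(p)}}\tau}(\overline{\lambda_i^{(p)}}-q)$. Division by $\overline{\lambda_i^{(p)}}+\lambda_j^{(p)}\neq 0$ (its real part is negative for $p<\lambda$) gives the first equality in \eqref{eq:general_EWS}, and $\tau=0$ together with that identity gives the second. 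For (b) I pass to moduli: for $i=j=1$ the numerator is bounded below by $(C^-)^3$ times a positive constant while the denominator equals $2|\operatorname{Re}\lambda_1^{(p)}|\to 0$, yielding blow-up; for $(i,j)\neq(1,1)$ numerator and denominator are both bounded above and below by positive constants, yielding \eqref{eq:thm1_b}.

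\textbf{Parts (c) and (d).} For (c) I repeat the computation with $v={e_{i,k_1}^{(p)}}^*$, $w={e_{j,k_2}^{(p)}}^*$, now using the Jordan relations $A_0(p)^*{e_{j,k_2}^{(p)}}^*=\overline{\lambda_j^{(p)}}{e_{j,k_2}^{(p)}}^*+{e_{j,k_2-1}^{(p)}}^*$ and $A_0(p)^*{e_{i,k_1}^{(p)}}^*=\overline{\lambda_i^{(p)}}{e_{i,k_1}^{(p)}}^*+{e_{i,k_1-1}^{(p)}}^*$, and the nilpotent-exponential identity $e^{A_0(p)^*\tau}{e_{i,k_1}^{(p)}}^*=e^{\overline{\lambda_i^{(p)}}\tau}\sum_{k=1}^{k_1}\tfrac{\tau^{k_1-k}}{(k_1-k)!}{e_{i,k}^{(p)}}^*$, which follows from $A_0(p)^*=\overline{\lambda_i^{(p)}}I+N$ on $E_i(p)^*$ with $N{e_{i,k}^{(p)}}^*={e_{i,k-1}^{(p)}}^*$. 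The left side of \eqref{eq:lyap} now contributes, besides $(\overline{\lambda_i^{(p)}}+\lambda_j^{(p)})\langle v,V_\infty^\tau w\rangle$, the two cross terms $\langle{e_{i,k_1-1}^{(p)}}^*,V_\infty^\tau{e_{j,k_2}^{(p)}}^*\rangle$ and $\langle{e_{i,k_1}^{(p)}}^*,V_\infty^\tau{e_{j,k_2-1}^{(p)}}^*\rangle$, while the right side becomes the displayed finite sum over $k$; solving for $\langle{e_{i,k_1}^{(p)}}^*,V_\infty^\tau{e_{j,k_2}^{(p)}}^*\rangle$ gives \eqref{eq:thm1_1}. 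For (d) I would induct on $k_1+k_2$ via \eqref{eq:thm1_1}: the base case $k_1=k_2=1$ is (b)/(a), giving $\Theta(|(\overline{\lambda_i^{(p)}}+\lambda_j^{(p)})^{-1}|)$; in the inductive step the two cross terms are each $\Theta(|(\overline{\lambda_i^{(p)}}+\lambda_j^{(p)})^{-k_1-k_2+2}|)$ by the hypothesis, the remaining sum in \eqref{eq:thm1_1} is $O(1)$ by \eqref{eq:request1}, \eqref{eq:request2}, \eqref{eq:request3} and continuity, and division by $\overline{\lambda_i^{(p)}}+\lambda_j^{(p)}$ lowers the exponent by one, producing \eqref{eq:thm1_d}.

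\textbf{Main obstacle.} The delicate point in (d) is that \eqref{eq:thm1_1} writes the quantity of interest as a \emph{sum}, so cancellation of the leading-order contribution as $p\to\lambda^-$ must be excluded. I would handle this by unwinding the recursion \eqref{eq:thm1_1}: the most singular power $(\overline{\lambda_i^{(p)}}+\lambda_j^{(p)})^{-(k_1+k_2-1)}$ is carried with coefficient $(-1)^{k_1+k_2-1}\binom{k_1+k_2-2}{k_1-1}$ times the base source term $e^{\overline{\lambda_i^{(p)}}\tau}(\overline{\lambda_i^{(p)}}-q)(\lambda_j^{(p)}-q)\langle{e_{i,1}^{(p)}}^*,D(p)BB^*D(p)^*{e_{j,1}^{(p)}}^*\rangle$, the binomial coefficient being assembled through Pascal's rule at each step. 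In the case $i=j=1$, the only one in which this term diverges, $\overline{\lambda_1^{(p)}}+\lambda_1^{(p)}=2\lambda_1^{(p)}$ is real and negative and the source term equals $e^{\lambda_1^{(p)}\tau}\,|\lambda_1^{(p)}-q|^2\,\| B^*D(p)^*{e_{1,1}^{(p)}}^* \|^2>0$, so the leading contribution is real, of fixed sign, and $\Theta(1)$ by \eqref{eq:request1}, \eqref{eq:request2}; it therefore dominates the $O(|(\overline{\lambda_i^{(p)}}+\lambda_j^{(p)})^{-(k_1+k_2-2)}|)$ remainder and no cancellation occurs. For $(i,j)\neq(1,1)$ all denominators stay bounded above and away from zero, so every term is $\Theta(1)$ and \eqref{eq:thm1_d} holds with the exponent inconsequential, the only remaining check being that the limiting value at $p=\lambda$ is nonzero, again guaranteed by \eqref{eq:request1}, \eqref{eq:request2}, \eqref{eq:request3}. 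A minor bookkeeping point is to justify all uniform bounds from the assumed continuity in $p$ on $[\lambda_0,\lambda]$.
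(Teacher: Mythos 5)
Your proposal is correct and follows essentially the same route as the paper: test the generalized Lyapunov equation of Lemma \ref{lm:lyap} with the (generalized) eigenfunctions of $A_0(p)^*$, use the Jordan relations and the nilpotent exponential to get the exact identities (a), (c), and deduce the scaling laws (b), (d) by induction, with the most singular contribution carried by the coefficient $\binom{k_1+k_2-2}{k_1-1}$ exactly as the paper asserts. Your explicit Pascal's-rule argument excluding cancellation of the leading term is a slightly more detailed justification of the step the paper states without proof (only note that for complex $\lambda_1^{(p)}$ the leading coefficient need not be real, but its modulus is bounded below, which is all the $\Theta$-estimate requires).
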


\begin{proof}
\begin{itemize}

\item[a)] We fix the pair of indexes $i,j\in\mathbb{N}_{>0}$. Equation \eqref{eq:lyap} implies that
\begin{align} \label{eq:to_split}
    &\left\langle {e_{i,1}^{(p)}}^*, A_0(p) V_\infty^\tau {e_{j,1}^{(p)}} \right\rangle + \left\langle {e_{i,1}^{(p)}}^*, V_\infty^\tau A_0(p)^* {e_{j,1}^{(p)}}^* \right\rangle\\
    &= - \left\langle {e_{i,1}^{(p)}}^*, e^{A_0(p) \tau} (A_0(p)-q) D(p) B B^* D(p)^* (A_0(p)^*-q)  {e_{j,1}^{(p)}}^* \right\rangle\;. \nonumber
\end{align}
The left-hand side and the right-hand side of \eqref{eq:to_split} satisfy
\begin{align*}
    &\left\langle {e_{i,1}^{(p)}}^*, A_0(p) V_\infty^\tau {e_{j,1}^{(p)}}^* \right\rangle + \left\langle {e_{i,1}^{(p)}}^*, V_\infty^\tau A_0(p)^* {e_{j,1}^{(p)}}^* \right\rangle 
    = \left(\overline{\lambda_i^{(p)}}+\lambda_j^{(p)}\right)\left\langle {e_{i,1}^{(p)}}^*, V_\infty^\tau {e_{j,1}^{(p)}}^* \right\rangle
    \end{align*}
and 
\begin{align*}
    &- \left\langle {e_{i,1}^{(p)}}^*, e^{A_0(p) \tau} (A_0(p)-q) D(p) B B^* D(p)^* (A_0(p)^*-q)  {e_{j,1}^{(p)}}^* \right\rangle \\
    &= - \left\langle (A_0(p)^*-q) e^{A_0(p)^* \tau} {e_{i,1}^{(p)}}^*, D(p) B B^* D(p)^* (A_0(p)^*-q)  {e_{j,1}^{(p)}}^* \right\rangle \\
    &= - \left(\overline{\lambda_i^{(p)}}-q\right) \left(\overline{\lambda_j^{(p)}}-q\right) e^{\overline{\lambda_i^{(p)}}\tau}
    \left\langle {e_{i,1}^{(p)}}^*,  D(p) B B^* D(p)^* {e_{j,1}^{(p)}}^* \right\rangle \;,
\end{align*}
respectively. Hence, equation \eqref{eq:general_EWS} is proven.

\item[b)]

The rate in \eqref{eq:thm1_b} follows from \eqref{eq:request3}, \eqref{eq:lyap}, \eqref{eq:request1} and the fact that $A_0(p)$ has discrete spectrum for any $p\leq\lambda$. In the case $i=j=1$, the divergence is implied by \eqref{eq:request2}.

\item[c)]

The construction of generalized eigenfunctions and \eqref{eq:coeff_2} imply that
\begin{align*}
    &\left\langle {e_{i,k_1}^{(p)}}^*, A_0(p) V_\infty^\tau {e_{j,k_2}^{(p)}}^* \right\rangle + \left\langle {e_{i,k_1}^{(p)}}^*, V_\infty^\tau A_0(p)^* {e_{j,k_2}^{(p)}}^* \right\rangle\\
    &= \left(\overline{\lambda_i^{(p)}}+\lambda_j^{(p)}\right)\left\langle {e_{i,k_1}^{(p)}}^*, V_\infty^\tau {e_{j,k_2}^{(p)}}^* \right\rangle
    + \left\langle {e_{i,k_1}^{(p)}}^*, V_\infty^\tau {e_{j,k_2-1}^{(p)}}^* \right\rangle
    + \left\langle {e_{i,k_1-1}^{(p)}}^*, V_\infty^\tau {e_{j,k_2}^{(p)}}^* \right\rangle \;.
\end{align*}
It also holds
\begin{align*}
    &- \left\langle {e_{i,k_1}^{(p)}}^*, e^{A_0(p) \tau} (A_0(p)-q) D(p) B B^* D(p)^* (A_0(p)^*-q)  {e_{j,k_2}^{(p)}}^* \right\rangle \\
    &= - \left\langle e^{A_0(p)^* \tau} (A_0(p)^*-q) {e_{i,k_1}^{(p)}}^*, D(p) B B^* D(p)^* (A_0(p)^*-q)  {e_{j,k_2}^{(p)}}^* \right\rangle \\
    &= - \left\langle e^{A_0(p)^* \tau} \left(\left(\overline{\lambda_i^{(p)}}-q\right) {e_{i,k_1}^{(p)}}^*+{e_{i,k_1-1}^{(p)}}^*\right), D(p) B B^* D(p)^* \left(\left(\overline{\lambda_j^{(p)}}-q\right) {e_{j,k_2}^{(p)}}^*+{e_{j,k_2-1}^{(p)}}^*\right) \right\rangle \\
    &= - \left\langle e^{\overline{\lambda_i^{(p)}} \tau} \sum_{k=1}^{k_1} \left(\left(\overline{\lambda_i^{(p)}}-q\right) \frac{\tau^{k_1-k}}{(k_1-k)!} {e_{i,k}^{(p)}}^*+\frac{\tau^{k_1-k}}{(k_1-k)!} {e_{i,k-1}^{(p)}}^*\right), D(p) B B^* D(p)^* \left(\left(\overline{\lambda_j^{(p)}}-q\right) {e_{j,k_2}^{(p)}}^*+{e_{j,k_2-1}^{(p)}}^*\right) \right\rangle \\
    &= -  e^{\overline{\lambda_i^{(p)}} \tau} \sum_{k=1}^{k_1} \frac{\tau^{k_1-k}}{(k_1-k)!} \left\langle \left(\left(\overline{\lambda_i^{(p)}}-q\right)  {e_{i,k}^{(p)}}^*+ {e_{i,k-1}^{(p)}}^*\right), D(p) B B^* D(p)^* \left(\left(\overline{\lambda_j^{(p)}}-q\right) {e_{j,k_2}^{(p)}}^*+{e_{j,k_2-1}^{(p)}}^*\right) \right\rangle \;.
\end{align*}
The equality follows from \eqref{eq:lyap}.

\item[d)]
The rate can be proven by induction. For $k_1=k_2=1$, the equality \eqref{eq:general_EWS} and assumptions \eqref{eq:request3}, \eqref{eq:request1} and \eqref{eq:request2} imply that
\begin{align*}
    \left\lvert\left\langle {e_{i,1}^{(p)}}^*, V_\infty^\tau {e_{j,1}^{(p)}}^*\right\rangle \right\rvert 
    = \Theta\left(
    \left\lvert \left(\overline{\lambda_i^{(p)}}+\lambda_j^{(p)}\right)^{-1}\right\rvert\right)\quad.
\end{align*}
The induction step follows directly from \eqref{eq:request3}. From induction, the scaling law is given by the fact that the unique term with leading absolute value in
\begin{equation*}
    \left\langle {e_{i,k_1}^{(p)}}^*, V_\infty^\tau {e_{j,k_2}^{(p)}}^*\right\rangle
\end{equation*}
is
\begin{equation*}
    (-1)^{k_1+k_2} \binom{k_1+k_2-2}{k_1-1} \left\langle {e_{i,1}^{(p)}}^*, V_\infty^\tau {e_{j,1}^{(p)}}^* \right\rangle
    \left(\overline{\lambda_i^{(p)}}+\lambda_j^{(p)}\right)^{-k_1-k_2+2}.
\end{equation*}
\end{itemize}
This finishes the proof.
\end{proof}

Theorem \ref{thm:general} provides an early-warning sign in the form of the qualitative behaviour of the time-asymptotic autocovariance of the solution of system \eqref{eq:fast_linear} along certain directions in $\mathcal{H}$. Of particular importance is the case $i=j=1$ under assumptions \eqref{eq:request1} and \eqref{eq:request2}, which provides divergence of the early-warning sign, whose scaling law can be observed under \eqref{eq:request3}. The restriction on the choice of space on which the autocovariance is studied limits the use of such theory in practical applications. In the following theorem, the use of the time-asymptotic autocovariance operator as an early-warning sign is extended on a bigger subspace of functions in $\mathcal{H}$. To prove such generalization, we introduce the projection operators $\Pi_{1,k}(p)^*$, $\Pi_{1}(p)^*$ and $\Pi_{-1}(p)^*$ in $\mathcal{H}$ as
\begin{align*}
    \Pi_{1,k}(p)^* v= \left\langle v, e_{1,M_1-k+1}^{(p)} \right\rangle {e_{1,k}^{(p)}}^*
    \quad \text{,} \quad
    \Pi_{1}(p)^* v=
    \sum_{k=1}^{M_1} \Pi_{1,k}(p)^* v
    \qquad \text{and} \qquad
    \Pi_{-1}(p)^* v= v-\Pi_{1}(p)^* v
    \quad ,
\end{align*}
for any $v\in \mathcal{H}$ and $k\in\left\{1,\dots,M_1\right\}$.

\begin{thm} \label{thm:second}
We set $\tau\geq0$ and $M\in\mathbb{N}_{>0}$. We define $O_M(p)^*:=\overset{M}{\underset{i=1}{\bigoplus}} E_i(p)^*$.

\begin{itemize}
\item[a)]
For any $f_1,f_2\in O_M(p)^*$ it holds

\begin{equation} \label{eq:second_EWS}
    \left\langle f_1, V_\infty^\tau f_2 \right\rangle =
    \underset{i,j\in\left\{1,\dots,M\right\}}{\mathlarger{\mathlarger\sum}}
    \underset{k_1\in\left\{1,\dots,M_i\right\}}{\mathlarger{\mathlarger\sum}}
    \underset{k_2\in\left\{1,\dots,M_j\right\}}{\mathlarger{\mathlarger\sum}}
    \left\langle f_1, e_{i,M_i-k_1+1}^{(p)} \right\rangle
    \left\langle {e_{i,k_1}^{(p)}}^*, V_\infty^\tau {e_{j,k_2}^{(p)}}^* \right\rangle
    \overline{\left\langle f_2, e_{j,M_j-k_2+1}^{(p)} \right\rangle}\;.
\end{equation}

\item[b)]
We assume the existence of $C^->0$ and $q=q(p)$ that satisfy \eqref{eq:request1} for all $i,j\in\{1,\dots,M\}$ and \eqref{eq:request2} for $i,j=1$. We set the sequences $\left\{f_1^{(p)}\right\},\left\{f_2^{(p)}\right\}$ continuous in $\mathcal{H}$, for $p\leq\lambda$, such that $\Pi_{1}(\lambda)^* f_1^{(\lambda)}\not\equiv0\not\equiv\Pi_{1}(\lambda)^* f_2^{(\lambda)}$ and $f_1^{(p)},f_2^{(p)}\in O_M(p)^*$ for any $p\leq\lambda$. Then it holds
\begin{equation*}
    \underset{p\to\lambda^-}{\lim}\left\lvert\left\langle f_1^{(p)}, V_\infty^\tau f_2^{(p)}\right\rangle \right\rvert
    = +\infty \;.
\end{equation*}
\item[c)]
Under the assumption of the previous item and for fixed $k_1,k_2\in\left\{1,\dots,M_1\right\}$ such that 
\begin{equation} \label{eq:request_4}
    k_m=
    \underset{i\in\left\{1,\dots,M_1\right\}}{\text{argmax}}\left\{\Pi_{1,j}(\lambda)^* f_m^{(\lambda)}\not\equiv0 \right\}\;,
\end{equation}
    for $m\in\{1,2\}$, it holds
\begin{equation} \label{eq:main_order_cor}
    \left\lvert\left\langle f_1^{(p)}, V_\infty^\tau f_2^{(p)}\right\rangle \right\rvert
    = \Theta\left(\operatorname{Re}\left( -\lambda_1^{(p)}\right)^{-k_1-k_2+1} \right) \;.
\end{equation}
\end{itemize}

\end{thm}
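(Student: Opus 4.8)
The plan is to reduce Theorem \ref{thm:second} to the already-established Theorem \ref{thm:general} by expanding arbitrary elements of $O_M(p)^*$ in the biorthogonal generalized-eigenfunction basis. For item a), I would take $f_1, f_2 \in O_M(p)^* = \bigoplus_{i=1}^M E_i(p)^*$ and write each as a finite sum over its components in the invariant subspaces $E_i(p)^*$, using the expansion \eqref{eq:coeff}: $f_m = \sum_{i=1}^M \sum_{k=1}^{M_i} \langle f_m, e_{i,M_i-k+1}^{(p)} \rangle\, {e_{i,k}^{(p)}}^*$. Substituting both expansions into $\langle f_1, V_\infty^\tau f_2 \rangle$ and using sesquilinearity of the inner product — conjugate-linear in the second slot, hence the conjugate on the $f_2$-coefficients — immediately gives \eqref{eq:second_EWS}. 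The only thing to check is that $V_\infty^\tau$ is well-defined on these finite linear combinations of generalized eigenfunctions; since each ${e_{j,k}^{(p)}}^* \in \mathcal{D}(A_0(p)^*)$, and $V_\infty^\tau \in \mathcal{L}_b(\mathcal{H})$ by Lemma \ref{lm:lyap}, this is immediate.

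For item b), I would combine the finite-sum formula \eqref{eq:second_EWS} with the scaling laws of Theorem \ref{thm:general}. Only the term with $i=j=1$ and $k_1 = k_2 = M_1$ (or more precisely $k_1 = k_2$ equal to the top index with nonzero projection) carries the highest-order divergence: by item d) of Theorem \ref{thm:general}, $|\langle {e_{1,k_1}^{(p)}}^*, V_\infty^\tau {e_{1,k_2}^{(p)}}^*\rangle| = \Theta(|(\overline{\lambda_1^{(p)}} + \lambda_1^{(p)})^{-k_1-k_2+1}|) = \Theta(\operatorname{Re}(-\lambda_1^{(p)})^{-k_1-k_2+1})$, which blows up as $p \to \lambda^-$, whereas every term with $(i,j) \neq (1,1)$ stays $\Theta(1)$ by item b). The coefficients $\langle f_m^{(p)}, e_{i,M_i-k+1}^{(p)}\rangle$ are continuous in $p$ by the assumed continuity of the sequences $\{f_m^{(p)}\}$ and of the generalized eigenfunctions, so they converge; the hypothesis $\Pi_1(\lambda)^* f_m^{(\lambda)} \not\equiv 0$ guarantees that at least one coefficient in the $E_1$-block is nonzero in the limit, so the divergent contribution does not get killed by a vanishing prefactor. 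The main subtlety is the possibility of cancellation among the divergent terms: a priori, the terms $\langle {e_{1,k_1}^{(p)}}^*, V_\infty^\tau {e_{1,k_2}^{(p)}}^*\rangle$ for various $(k_1,k_2)$ with $k_1+k_2$ maximal could, when weighted by the coefficients, sum to something of lower order. I would handle this by identifying the leading term precisely using the explicit formula at the end of the proof of Theorem \ref{thm:general} d): the leading part of $\langle {e_{1,k_1}^{(p)}}^*, V_\infty^\tau {e_{1,k_2}^{(p)}}^*\rangle$ is $(-1)^{k_1+k_2}\binom{k_1+k_2-2}{k_1-1}\langle {e_{1,1}^{(p)}}^*, V_\infty^\tau {e_{1,1}^{(p)}}^*\rangle (\overline{\lambda_1^{(p)}} + \lambda_1^{(p)})^{-k_1-k_2+2}$, so all the top-order contributions share the common factor $\langle {e_{1,1}^{(p)}}^*, V_\infty^\tau {e_{1,1}^{(p)}}^*\rangle (\overline{\lambda_1^{(p)}}+\lambda_1^{(p)})^{-k_1-k_2+2}$ with $k_1+k_2$ maximal, and the residual combinatorial sum of binomials against the (limiting, nonzero) coefficients must be shown to be nonzero — which is where the precise definition \eqref{eq:request_4} of $k_m$ as the \emph{argmax} of nonvanishing projections enters.

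For item c), I would run the same expansion but now track the exact order rather than just divergence. With $k_1, k_2$ defined by \eqref{eq:request_4}, the dominant block in \eqref{eq:second_EWS} consists of the terms with $i=j=1$ and first index $\le k_1$, second index $\le k_2$; among these the maximal value of (first index)$+$(second index) is $k_1 + k_2$, achieved only by the single term with indices exactly $(k_1, k_2)$, whose coefficient pair $\langle f_1^{(\lambda)}, e_{1,M_1-k_1+1}^{(\lambda)}\rangle \overline{\langle f_2^{(\lambda)}, e_{1,M_1-k_2+1}^{(\lambda)}\rangle}$ is nonzero in the limit by the argmax property. Hence $|\langle f_1^{(p)}, V_\infty^\tau f_2^{(p)}\rangle| = \Theta(|(\overline{\lambda_1^{(p)}}+\lambda_1^{(p)})^{-k_1-k_2+1}|) = \Theta(\operatorname{Re}(-\lambda_1^{(p)})^{-k_1-k_2+1})$, using $\overline{\lambda_1^{(p)}} + \lambda_1^{(p)} = 2\operatorname{Re}(\lambda_1^{(p)})$ and that this is negative and tends to $0$. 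I expect the main obstacle to be precisely the non-cancellation argument described above: showing that after summing the highest-order pieces against the limiting coefficients, the result is genuinely nonzero, so that the $\Theta$-bound is tight rather than merely an upper bound; once the leading term is isolated as a single $(k_1,k_2)$-contribution via \eqref{eq:request_4}, this reduces to the nonvanishing of one product of limiting coefficients times $\langle {e_{1,1}^{(\lambda^-)}}^*, V_\infty V_\infty\cdots\rangle$-type quantities controlled by \eqref{eq:request2}, which is exactly what the hypotheses are designed to supply.
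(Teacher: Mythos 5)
Your proposal is correct and follows essentially the same route as the paper: expand $f_1,f_2$ in the biorthogonal generalized-eigenfunction system via \eqref{eq:coeff} to get \eqref{eq:second_EWS}, then feed in the rates of Theorem \ref{thm:general} and isolate the single dominant $E_1$-block term selected by the argmax condition \eqref{eq:request_4}, noting (as you do, correctly resolving your own cancellation worry) that any competing pair $(k_1',k_2')$ with both limiting coefficients nonzero satisfies $k_1'+k_2'<k_1+k_2$ unless it equals $(k_1,k_2)$. The only cosmetic difference is that in item b) the paper bounds the contributions outside the $E_1$-block through the projections $\Pi_{1}(p)^*,\Pi_{-1}(p)^*$ and the integral estimates \eqref{eq:condition2}--\eqref{eq:easy_3}, whereas you bound them directly from the finite expansion of part a) together with Theorem \ref{thm:general}; for $f_1^{(p)},f_2^{(p)}\in O_M(p)^*$ these are equivalent.
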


\begin{proof}
\begin{itemize}
    \item[a)] 
    The construction of $O_M(p)^*$ implies that $f_1$ and $f_2$ are linear combinations of the generalized eigenfunctions of $A_0(p)^*$ associated to $\overline{\lambda_i^{(p)}}$ for $i\in\left\{1,\dots,M\right\}$. The coefficients in such sum follow from \eqref{eq:coeff}. It therefore holds
    \begin{align*}
        \left\langle f_1, V_\infty^\tau f_2 \right\rangle 
        &=
        \underset{i,j\in\left\{1,\dots,M\right\}}{\mathlarger{\mathlarger\sum}}
        \left\langle \underset{k_1\in\left\{1,\dots,M_i\right\}}{\mathlarger{\mathlarger\sum}} \left\langle f_1, e_{i,M_i-k_1+1}^{(p)} \right\rangle {e_{i,k_1}^{(p)}}^*, V_\infty^\tau
        \underset{k_2\in\left\{1,\dots,M_j\right\}}{\mathlarger{\mathlarger\sum}} \left\langle f_2,  e_{j,M_j-k_2+1}^{(p)} \right\rangle {e_{j,k_2}^{(p)}}^* \right\rangle \\
         &=
        \underset{i,j\in\left\{1,\dots,M\right\}}{\mathlarger{\mathlarger\sum}}
        \underset{k_1\in\left\{1,\dots,M_i\right\}}{\mathlarger{\mathlarger\sum}}
        \underset{k_2\in\left\{1,\dots,M_j\right\}}{\mathlarger{\mathlarger\sum}}
        \left\langle f_1, e_{i,M_i-k_1+1}^{(p)} \right\rangle
        \left\langle {e_{i,k_1}^{(p)}}^*, V_\infty^\tau {e_{j,k_2}^{(p)}}^* \right\rangle
        \overline{\left\langle f_2, e_{j,M_j-k_2+1}^{(p)} \right\rangle}\;.
    \end{align*}
    
    \item[b)] 
    Assumption \eqref{eq:request3} implies that, for any $p\leq\lambda$, it holds
    \begin{equation} \label{eq:condition2}
        \int_0^\infty \left\lvert\left\lvert B^* D(p)^* e^{A_0(p)^* s} \left(A_0(p)^*-q\right)  \Pi_{-1}(p)^* f \right\rvert\right\rvert_{L^2(\partial \mathcal{X})}^r \text{d}s\;<+\infty\;,
    \end{equation}
    for any $f\in O_M(p)^*$ and $r>0$.\\
    Since $f_1^{(p)},f_2^{(p)}\in \mathcal{H}$, it holds for any $p\leq\lambda$
    \begin{align*}
        \left\langle f_1^{(p)}, V_\infty^\tau f_2^{(p)} \right\rangle 
        &= \left\langle (\Pi_{-1}(p)^*+\Pi_1(p)^*) f_1^{(p)}, V_\infty^\tau (\Pi_{-1}(p)^*+\Pi_1(p)^*) f_2^{(p)} \right\rangle \\
        &= \left\langle \Pi_{-1}(p)^* f_1^{(p)}, V_\infty^\tau \Pi_{-1}(p)^* f_2^{(p)} \right\rangle 
        + \left\langle \Pi_{-1}(p)^* f_1^{(p)}, V_\infty^\tau \Pi_1(p)^* f_2^{(p)} \right\rangle \\
        &+ \left\langle \Pi_1(p)^* f_1^{(p)}, V_\infty^\tau \Pi_{-1}(p)^* f_2^{(p)} \right\rangle 
        + \left\langle \Pi_1(p)^* f_1^{(p)}, V_\infty^\tau \Pi_1(p)^* f_2^{(p)} \right\rangle \;.
    \end{align*}
    From \eqref{eq:condition2} it is implied that
    \begin{align} \label{eq:easy_if_D_bounded}
         &\left\lvert \left\langle \Pi_{-1}(p)^* f_1^{(p)}, V_\infty^\tau \Pi_{-1}(p)^* f_2^{(p)} \right\rangle \right\rvert \\
         & =\left\lvert \int_0^\infty \left\langle B^* D(p)^* e^{A_0(p)^* s} (A_0(p)^*-q) e^{A_0(p)^* \tau} \Pi_{-1}(p)^*  f_1^{(p)}, B^* D(p)^* e^{A_0(p)^* s} (A_0(p)^*-q)  \Pi_{-1}(p)^* f_2^{(p)} \right\rangle_{L^2(\partial \mathcal{X})} \text{d}s  \right\rvert <+\infty \nonumber
    \end{align}
    for any $p\leq\lambda$. In the following we use the fact that, for any $i\in\mathbb{N}_{>0}$, $E_i(p)^*$ is an invariant subspace of $\mathcal{D}(A_0(p)^*)$ under the action of $A_0(p)^*$ and that $A_0(p)^*\lvert_{E_i}$ is a bounded non-positive operator in $\mathcal{D}(A_0(p)^*)$ by construction. Also, condition \eqref{eq:condition} and \eqref{eq:condition2} imply that, for $p$ approaching $\lambda$ from below,
    \begin{align} \label{eq:easy_2}
         & \left\lvert \left\langle \Pi_{-1}(p)^* f_1^{(p)}, V_\infty^\tau \Pi_1(p)^* f_2^{(p)} \right\rangle \right\rvert \\
         & =\left\lvert \int_0^\infty \left\langle B^* D(p)^* e^{A_0(p)^* s} (A_0(p)^*-q) e^{A_0(p)^* \tau} \Pi_{-1}(p)^* f_1^{(p)}, B^* D(p)^* e^{A_0(p)^* s} (A_0(p)^*-q)  \Pi_1(p)^* f_2^{(p)} \right\rangle_{L^2(\partial \mathcal{X})} \text{d}s \right\rvert \nonumber \\
         & =\left\lvert
         \int_0^\infty \left\langle B^* D(p)^* e^{A_0(p)^* s} (A_0(p)^*-q) \Pi_{-1}(p)^* e^{A_0(p)^* \tau} f_1^{(p)}, B^* D(p)^* e^{A_0(p)^*\lvert_{E_1} s} (A_0(p)^*\lvert_{E_1}-q) \Pi_{1}(p)^* f_2^{(p)} \right\rangle_{L^2(\partial \mathcal{X})} \text{d}s \right\rvert \nonumber \\
         & \leq C
         \int_0^\infty \Big\lvert\Big\lvert B^* D(p)^* e^{A_0(p)^* s} (A_0(p)^*-q) \Pi_{-1}(p)^* e^{A_0(p)^* \tau} f_1^{(p)} \Big\rvert\Big\rvert_{L^2(\partial \mathcal{X})} \; \text{d}s \;
         \sum_{k=1}^{M_1}\Big\lvert\Big\lvert B^* D(p)^* {e_{1,k}^{(p)}}^* \Big\rvert\Big\rvert_{L^2(\partial \mathcal{X})} = \Theta \left(1\right) \nonumber \;.
    \end{align} 
    Equivalently,
    \begin{align} \label{eq:easy_3}
         \left\lvert \left\langle \Pi_1(p)^* f_1^{(p)}, V_\infty^\tau \Pi_{-1}(p)^* f_2^{(p)} \right\rangle \right\rvert
         & = \Theta \left(1\right)
    \end{align}    
    for $p \to \lambda^-$. Lastly, from \eqref{eq:second_EWS} we obtain
    \begin{align} \label{eq:tool_abs_sum}
         \left\lvert \left\langle \Pi_1(p)^* f_1^{(p)}, V_\infty^\tau \Pi_1(p)^* f_2^{(p)} \right\rangle \right\rvert
         & = \left\lvert \underset{k_1,k_2\in\left\{1,\dots,M_1\right\}}{\mathlarger{\mathlarger\sum}} \left\langle f_1^{(p)}, e_{1,k_1}^{(p)} \right\rangle  \;
         \left\langle {e_{1,k_1}^{(p)}}^*, V_\infty^\tau {e_{1,k_2}^{(p)}}^* \right\rangle \;
         \left\langle f_2^{(p)}, e_{1,k_2}^{(p)} \right\rangle \right\rvert \;.
    \end{align}
    From \eqref{eq:thm1_d} and the continuity of the generalized eigenfunctions on variable $p$ follows the divergence.
    
    \item[c)]
    The previous item and \eqref{eq:tool_abs_sum} imply 
    \begin{align*}
         \left\lvert \left\langle \Pi_1(p)^* f_1^{(p)}, V_\infty^\tau \Pi_1(p)^* f_2^{(p)} \right\rangle \right\rvert
         & = \Theta\left(\operatorname{Re}\left(- \lambda_1^{(p)}\right)^{-k_1-k_2+1} \right)
    \end{align*}
    and the conclusion of the proof.
    \end{itemize} 
\end{proof}

Theorem \ref{thm:second} presents an early-warning sign whose numerical approximation (see Section \ref{sec:examples}) does not rely on the precise computation of the elements in $E_1(p)$ and $E_1(p)^*$ for $p\leq\lambda$. Simulations and implementations of such results are more practical than the early-warning signs obtained in Theorem \ref{thm:general}.

\begin{remark}
The assumption of boundary noise can affect the rate of divergence of the autocovariance along certain directions in $\mathcal{D}(A_0(p))^*$. Furthermore, the omission of assumption \eqref{eq:request2} can remove the early-warning sign. For example, in the case 
\begin{equation*}
    \left\langle {e_{1,k_1}^{(p)}}^*,  D(p) B B^* D(p)^* {e_{1,k_2}^{(p)}}^* \right\rangle = 0
\end{equation*}
for any $p\leq \lambda$ and $k_1,k_2\in\{1,\dots,M_1\}$, it is implied that the observable $\left\lvert\left\langle f_1^{(p)}, V_\infty^\tau f_2^{(p)}\right\rangle \right\rvert$, for $\left\{f_1^{(p)}\right\}$ and $\left\{f_2^{(p)}\right\}$ that satisfy the assumptions in Theorem \ref{thm:second}, does not display divergence as $p\to\lambda^-$. This is caused by the interplay of the linear operator $A(p)$ that defines the drift component, the boundary operator $\gamma(p)$ and the operator $B$ associated to the diffusion. In detail, the first two define $A_0(p)$, $A_0(p)^*$ and their eigenfunctions, along with the operators $D(p)$ and $D(p)^*$. The observable can display convergence also in the case $\left\{f_1^{(p)}\right\}$ or $\left\{f_2^{(p)}\right\}$ are orthogonal to $E_1(p)$, as implied by \eqref{eq:second_EWS}. A similar property is employed in Section \ref{ssec:Bous}. The role of the choice of directions in $\mathcal{H}$, along which the time-asymptotic autocovariance is studied, is discussed further in the following corollary. 
\end{remark}

\begin{cor} \label{cor:autocorrelation}
We set $\tau\geq0$. 
\begin{itemize}
\item[a)] We assume the existence of $C^->0$ and $q=q(p)$ that satisfy \eqref{eq:request1} for all $i,j\in\mathbb{N}_{>0}$ and \eqref{eq:request2} for $i,j=1$. We assume also that the generalized eigenfunctions of $A_0(p)^*$ are complete in $\mathcal{D}(A_0(p)^*)$ for any $p\leq\lambda$. Lastly, we set the sequences $\left\{f_1^{(p)}\right\},\left\{f_2^{(p)}\right\}$ continuous in $\mathcal{H}$ for $p\leq\lambda$. Then for any $\delta>0$ there exist two sequences $\left\{g_1^{(p)}\right\},\left\{g_2^{(p)}\right\}$ continuous in $\mathcal{H}$ such that $g_1^{(p)},g_2^{(p)}\in\mathcal{D}(A_0(p)^*)$,
\begin{equation} \label{eq:shadow_cor}
    \left\lvert\left\lvert f_1^{(p)}-g_1^{(p)} \right\rvert\right\rvert<\delta \quad,\quad
    \left\lvert\left\lvert f_2^{(p)}-g_2^{(p)} \right\rvert\right\rvert<\delta\;,
\end{equation}
for any $p\leq\lambda$, and
\begin{equation*}
    \left\lvert\left\langle g_1^{(p)}, V_\infty^\tau g_2^{(p)}\right\rangle \right\rvert
    = \Theta\left(\operatorname{Re}\left(- \lambda_1^{(p)}\right)^{-2 M_1 +1} \right)
\end{equation*}
for $p\to\lambda^-$.
\item[b)]
We set $p<\lambda$ and assume that the generalized eigenfunctions of $A_0(p)^*$ are complete in $\mathcal{D}(A_0(p)^*)$. We define the autocorrelation nonlinear operator at time $t>0$ and lag $\tau\geq0$ as
\begin{equation*}
    \hat{V}_t^\tau(v, w)
    := \frac{\left\langle v, V_t^\tau w \right\rangle}{\left\langle v, V_t w \right\rangle}
\end{equation*}
for any $v,w\in\mathcal{D}(A_0(p)^*)$ such that $\left\langle v, V_t w \right\rangle\neq0$. The time-asymptotic autocorrelation nonlinear operator of lag $\tau\geq0$, labeled $\hat{V}_\infty^\tau$ and defined as
\begin{equation*}
    \hat{V}_\infty^\tau(v, w)
    = \underset{t\to\infty}{\lim}\hat{V}_t^\tau(v, w)
\end{equation*}
for any $v,w\in\mathcal{D}(A_0(p)^*)$ such that $\left\langle v, V_\infty w \right\rangle\neq0$, satisfies
\begin{equation} \label{eq:autocorr_final}
    \hat{V}_\infty^\tau \left({e_{i,1}^{(p)}}^*, f\right) = e^{\overline{\lambda_i^{(p)}}\tau},
\end{equation}
for any $i\in\mathbb{N}_{>0}$ and $f$ in a dense subset $\mathcal{H}'$ of $\mathcal{H}$ such that $\left\langle {e_{i,1}^{(p)}}^*, V_\infty f \right\rangle\neq0$.
\end{itemize}
\end{cor}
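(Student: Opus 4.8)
\emph{Proof plan.} Item b) is an almost immediate consequence of Proposition~\ref{prop:auto_cov}, while item a) requires a perturbation argument built on Theorem~\ref{thm:second}. For item b), the plan is to exploit the factorisation $V_\infty^\tau=e^{A_0(p)\tau}V_\infty$ supplied by \eqref{eq:tool_for_autocorrelation}. For $\tau\geq0$ the operator $e^{A_0(p)\tau}$ is bounded with adjoint $e^{A_0(p)^*\tau}$, and the $k=1$ case of the generalized‑eigenfunction relations says that ${e_{i,1}^{(p)}}^*$ is a genuine eigenfunction of $A_0(p)^*$ for the eigenvalue $\overline{\lambda_i^{(p)}}$, whence $e^{A_0(p)^*\tau}{e_{i,1}^{(p)}}^*=e^{\overline{\lambda_i^{(p)}}\tau}{e_{i,1}^{(p)}}^*$. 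Pushing the semigroup onto the first slot of the inner product and using linearity there, one gets for every $f\in\mathcal{H}$ the identity $\langle {e_{i,1}^{(p)}}^*,V_\infty^\tau f\rangle=e^{\overline{\lambda_i^{(p)}}\tau}\langle {e_{i,1}^{(p)}}^*,V_\infty f\rangle$ (for $f={e_{j,1}^{(p)}}^*$ this is exactly \eqref{eq:general_EWS} at those indices). Since $\langle {e_{i,1}^{(p)}}^*,V_t^\tau f\rangle\to\langle {e_{i,1}^{(p)}}^*,V_\infty^\tau f\rangle$ and $\langle {e_{i,1}^{(p)}}^*,V_t f\rangle\to\langle {e_{i,1}^{(p)}}^*,V_\infty f\rangle$ as $t\to\infty$, on the set $\mathcal{H}'$ of $f\in\mathcal{D}(A_0(p)^*)$ with $\langle {e_{i,1}^{(p)}}^*,V_\infty f\rangle\neq0$ the quotient defining $\hat V_\infty^\tau$ converges to $e^{\overline{\lambda_i^{(p)}}\tau}$, which is \eqref{eq:autocorr_final}. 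Finally $\mathcal{H}'$ is the complement in $\mathcal{D}(A_0(p)^*)$ of the kernel of the bounded linear functional $f\mapsto\langle {e_{i,1}^{(p)}}^*,V_\infty f\rangle$; that kernel is a proper subspace — from \eqref{eq:general_EWS} at $\tau=0$ one reads off $\langle {e_{i,1}^{(p)}}^*,V_\infty {e_{i,1}^{(p)}}^*\rangle\neq0$ in the nondegenerate case where the noise couples to the $i$‑th mode — so, using density of $\mathcal{D}(A_0(p)^*)$, $\mathcal{H}'$ is dense in $\mathcal{H}$.

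For item a), the idea is to nudge the given $f_1^{(p)},f_2^{(p)}$ into nearby functions $g_1^{(p)},g_2^{(p)}$ that (i) lie in a fixed finite spectral subspace $O_M(p)^*$ and in $\mathcal{D}(A_0(p)^*)$, (ii) depend continuously on $p$, and (iii) have a nonvanishing component along the top ${e_{1,M_1}^{(p)}}^*$ of the first Jordan chain at $p=\lambda$, so that items b)--c) of Theorem~\ref{thm:second} apply with the maximal Jordan indices $k_1=k_2=M_1$. Concretely I would fix $M$, let $Q_M(p)^*$ be the bounded, $p$‑continuous projection of $\mathcal{H}$ onto $O_M(p)^*$ built from the biorthogonal coefficients of \eqref{eq:coeff} (its range lying in $\mathcal{D}(A_0(p)^*)$), and choose $M$ large enough that $\|f_m^{(p)}-Q_M(p)^* f_m^{(p)}\|<\delta/2$ for all $p\in[\lambda_0,\lambda]$ and $m\in\{1,2\}$; then set $g_m^{(p)}:=Q_M(p)^* f_m^{(p)}+\eta_m\,{e_{1,M_1}^{(p)}}^*$ with $\eta_m\in(0,\delta/2)$ chosen so that $\langle g_m^{(\lambda)},e_{1,1}^{(\lambda)}\rangle\neq0$ — possible because this quantity is affine in $\eta_m$ with coefficient $\langle {e_{1,M_1}^{(\lambda)}}^*,e_{1,1}^{(\lambda)}\rangle\neq0$ by biorthogonality. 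By construction the $g_m^{(p)}$ are continuous in $p$, lie in $\mathcal{D}(A_0(p)^*)\cap O_M(p)^*$ for every $p\leq\lambda$, satisfy \eqref{eq:shadow_cor}, and have $\Pi_{1,M_1}(\lambda)^* g_m^{(\lambda)}\neq0$, so the index in \eqref{eq:request_4} equals $M_1$ for both $m$ (in particular $\Pi_1(\lambda)^* g_m^{(\lambda)}\not\equiv0$). The hypotheses of items b)--c) of Theorem~\ref{thm:second} are then met, and \eqref{eq:main_order_cor} with $k_1=k_2=M_1$ gives $\bigl|\langle g_1^{(p)},V_\infty^\tau g_2^{(p)}\rangle\bigr|=\Theta\bigl(\operatorname{Re}(-\lambda_1^{(p)})^{-2M_1+1}\bigr)$, as claimed.

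The hard part is the middle step of item a): producing a \emph{single} truncation index $M$ that works uniformly over the compact interval $p\in[\lambda_0,\lambda]$ while keeping the approximants continuous in $\mathcal{H}$. For each fixed $p$ this follows from completeness of the generalized eigenfunctions, and one then wants to upgrade the pointwise‑in‑$p$ estimate to a uniform one by a Dini‑type argument using continuity of $p\mapsto f_m^{(p)}$ and of the generalized eigenfunctions; care is needed because the eigenfunction expansion is in general only biorthogonal, not orthogonal, so the truncation error need not decay monotonically in $M$ without an extra Riesz‑basis‑type input or a uniform tail bound. Everything else — item b), the tilt by $\eta_m{e_{1,M_1}^{(p)}}^*$, and the invocation of Theorem~\ref{thm:second} — is routine bookkeeping.
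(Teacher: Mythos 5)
Your item b) follows essentially the paper's route: the whole content is the identity $\langle {e_{i,1}^{(p)}}^*,V_\infty^\tau f\rangle=e^{\overline{\lambda_i^{(p)}}\tau}\langle {e_{i,1}^{(p)}}^*,V_\infty f\rangle$ obtained from the factorisation \eqref{eq:tool_for_autocorrelation} of Proposition~\ref{prop:auto_cov} together with the fact that ${e_{i,1}^{(p)}}^*$ is a genuine eigenfunction of $A_0(p)^*$, and then dividing. Your version is in fact slightly more direct than the paper's, which instead takes $\mathcal{H}'=\mathcal{H}_0(p)$ (finite linear combinations of the generalized eigenfunctions, which is where the completeness hypothesis is used) and invokes \eqref{eq:second_EWS}; your argument gives the identity for every $f\in\mathcal{D}(A_0(p)^*)$ with nonzero denominator. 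One caveat: in the paper's reading the condition $\langle {e_{i,1}^{(p)}}^*,V_\infty f\rangle\neq0$ is a hypothesis on $f$, so your additional argument that the set of admissible $f$ is dense — which quietly needs a nondegeneracy of the type \eqref{eq:request2} for the $i$-th mode, not assumed in b) — is not required and should not be presented as if it followed from the stated hypotheses.

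For item a) your architecture again matches the paper (approximants lying in $O_M(p)^*$ with a nonvanishing component on ${e_{1,M_1}^{(\lambda)}}^*$, so that \eqref{eq:request_4} yields $k_1=k_2=M_1$ and Theorem~\ref{thm:second}~c) gives the exponent $-2M_1+1$; the tilt by $\eta_m\,{e_{1,M_1}^{(p)}}^*$ and the biorthogonality computation $\Pi_{1,M_1}(\lambda)^*v=\langle v,e_{1,1}^{(\lambda)}\rangle{e_{1,M_1}^{(\lambda)}}^*$ are fine). The genuine gap is the approximation step you yourself flag, and it is worse than a uniformity-in-$p$ issue: you take $g_m^{(p)}$ to be the canonical biorthogonal truncation $Q_M(p)^*f_m^{(p)}$ and require $\lVert f_m^{(p)}-Q_M(p)^*f_m^{(p)}\rVert<\delta/2$, but completeness of the generalized eigenfunctions only says that finite linear combinations are dense; it does not imply that the spectral partial sums of a given $f$ converge to $f$ (that would need a Riesz-basis or unconditional-decomposition type hypothesis not assumed here), so this step can fail already for a single fixed $p$. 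The paper's proof avoids this by never using the truncation of $f_m^{(p)}$ itself: it picks \emph{any} element of $\mathcal{H}_0(p)\cap\mathcal{H}_1(p)$ (with $\mathcal{H}_1(p)$ the open dense set of functions with nonzero $\Pi_{1,M_1}(p)^*$ component) within $\delta$ of $f_m^{(p)}$, which is pure density, and then asserts — using continuity in $p$ of $f_m^{(p)}$ and of the eigenfunctions — that the selection can be made continuous with a single $M$ for all $p\le\lambda$. So the uniform-$M$/continuous-selection point you raise is real but is handled only at the level of an assertion in the paper as well; the convergence-of-truncations problem, however, is specific to your construction and should be repaired by replacing $Q_M(p)^*f_m^{(p)}$ with a density-based approximant before adding the tilt.
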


\begin{proof}
\begin{itemize}
\item[a)] For $p\leq \lambda$, the fact that the generalized eigenfunctions of $A_0(p)^*$ are complete in its domain implies that the set of finite linear combinations of such functions, labeled $\mathcal{H}_0(p)$, is dense in $\mathcal{D}(A_0(p)^*)$. We also denote the set of functions $f\in\mathcal{H}$ such that $\Pi_{1,M_1}(p)^* f\not\equiv0$ as $\mathcal{H}_1(p)$. By construction, $\mathcal{H}_1(p)$ is dense in $\mathcal{H}$ and open. Therefore $\mathcal{H}'(p):=\mathcal{H}_0(p)\cap\mathcal{H}_1(p)$ is dense in $\mathcal{D}(A_0(p)^*)$, which in return is dense in $\mathcal{H}$.\\
From the continuity of $\left\{f_1^{(p)}\right\},\left\{f_2^{(p)}\right\}$ and the generalized eigenfunctions of $A_0(p)$ and $A_0(p)^*$, we can construct $\left\{g_1^{(p)}\right\},\left\{g_2^{(p)}\right\}$ continuous in $\mathcal{H}$ such that $g_1^{(p)},g_2^{(p)}\in\mathcal{H}'(p)$ for any $p\leq\lambda$. In particular, there exists $M$, dependent on $\delta$, such that $g_1^{(p)},g_2^{(p)}\in O_M(p)^*$ for any $p\leq\lambda$.
The conclusion is obtained from Theorem \ref{thm:second}.

\item[b)] We construct $\mathcal{H}'=\mathcal{H}_0(p)$, as in the previous part of the proof, for the chosen $p<\lambda$. The conclusion holds directly from \eqref{eq:tool_for_autocorrelation} and \eqref{eq:second_EWS}, fixing $f_1={e_{i,1}^{(p)}}^*$ and $f_2=f\in\mathcal{H}'$. 
\end{itemize}
\end{proof}

Corollary \ref{cor:autocorrelation} for $\tau>0$ presents an early-warning sign of the approach to the bifurcation threshold $\lambda$ as the time-asymptotic autocorrelation, along ${{e_{1,1}}^{(p)}}^*$ and another general function, reaches a value $1$ with an exponential rate. The behaviour of the autocorrelation is not trivial to observe numerically since such a sign is given by a quantitative change, in contrast to the time-asymptotic autocovariance, which is expected to diverge and hence change qualitatively. Such divergence is observed for a wide variety of directions in $\mathcal{H}$. This is implied by the assumption that the generalized eigenfunctions of $A_0(\lambda)^*$ are dense in $\mathcal{H}$. Such property has been proven for different non-self-adjoint operators with compact resolvent \cite{agmon1962eigenfunctions, inbook,zhang2001completeness}.

\begin{remark} \label{rmk:autocorrelation}
The time-asymptotic autocorrelation is a well-known early-warning sign for finite dimensional systems \cite{ditlevsen2010tipping}. Setting $\tau>0$ and assuming $M_1=1$, the autocorrelation $\hat{V}_\infty^\tau\left(f_1^{(p)},f_2^{(p)}\right)$, for functions $f_1^{(p)},f_2^{(p)}\in O_M(p)^*$ and $M\in\mathbb{N}_{>0}$, is yet not a straightforward object to determine analytically. From the sum in \eqref{eq:second_EWS}, we know that, for ${\Pi_1(p)}^* f_1^{(p)} \not\equiv 0 \not\equiv {\Pi_1(p)}^* f_2^{(p)}$, the leading term in the outer sum as $p\to\lambda^-$ is the component associated to $i=j=1$. This implies that for values of $p$ such that the mentioned element assumes a higher order of magnitude than the rest, and consequently the rates in Theorem \ref{thm:second} $(c)$ are observed, the time-asymptotic autocorrelation behaves similarly to $e^{\overline{\lambda_1^{(p)}}\tau}$ and assumes absolute value equal to $1$ on the threshold.
\end{remark}

\begin{remark} \label{rem:generalization}
In Theorem \ref{thm:general}, Theorem \ref{thm:second} and Corollary \ref{cor:autocorrelation}, we assume the operator $A_0(p)$ to have discrete spectrum and $\lambda_1^{(p)}$ to be the only eigenvalue whose real part changes sign as $p\to\lambda^-$. Such properties can be generalized by considering that the set of eigenvalues $\Lambda=\left\{\lambda_i^{(p)}\right\}_{i}$ such that
\begin{equation*}
    \operatorname{Re}\left(\lambda_i^{(p)}\right)\to 0 \qquad \text{as} \qquad p\to\lambda^-
\end{equation*}
is finite and isolated. This implies an asymptotic gap between the elements of such a set and the rest of the spectrum. Under such assumption, the divergence in Theorem \ref{thm:general} (b) is observed along the corresponding simple eigenfunctions of any element in $\Lambda$ and the divergence in Theorem \ref{thm:second} (b) is obtained along $\left\{f_1^{(p)}\right\}$ and $\left\{f_2^{(p)}\right\}$ whose projection on the generalized eigenspace of any element in $\Lambda$ is not null for $p=\lambda$. The rates Theorem \ref{thm:second} (c) and Corollary \ref{cor:autocorrelation} (a) depend on the rate of convergence of the real part of the elements in $\Lambda$. Certain elliptic (differential) operators are known to display discrete spectrum and completeness of the generalized eigenfunctions \cite{agmon1962eigenfunctions,inbook}. For the property of boundedness of $D(p)$ for such operators, we refer to \cite{lasiecka1980unified}. We emphasize that Theorem \ref{thm:second} (b) and (c) do not require the spectrum of $A_0(p)^*$ to be discrete, but rely on the fact that the set $\left\{\overline{\lambda_i^{(p)}}\right\}_{i}$ is composed by isolated points from the rest of the spectrum.
\end{remark}

\section{Examples and applications} \label{sec:examples}

The current section provides several examples of models on which the constructed early-warning signs are applicable.

\subsection{The heat equation with noise boundary conditions}

\begin{example} \label{ex:neumann}
We set $\mathcal{X}=[0,L]^N$, for $N\in\left\{1,2,3\right\}$, and $\gamma(p)=\gamma_\text{N}$ in \eqref{eq:fast_linear} that defines homogeneous Neumann boundary conditions on $\mathcal{X}$. We also consider $A(p)=\Delta+p$ and $A_0(p)=\Delta_\text{N}+p$ for $\Delta$ the Laplace operator on $\mathcal{X}$ and $\Delta_\text{N}$ the Laplace operator with homogeneous Neumann boundary conditions, i.e.,
\begin{equation*}
    \Delta_\text{N} f = \Delta f \text{, for any }
    f\in\mathcal{D}(\Delta_\text{N}):=\left\{v\in \mathcal{H}^2(\mathcal{X}):\; \gamma_\text{N}\; v(x)=0 \;,\;x\in\partial\mathcal{X}\right\}\;.
\end{equation*}
It is well known that the non-positive self-adjoint operator $\Delta_\text{N}$ has discrete spectrum $\left\{-\hat{\lambda}_i\right\}_{i\in\mathbb{N}_{>0}}$ and that its eigenfunctions form a basis in $\mathcal{H}$.
Due to the construction of $A_0(p)$, the domain $\mathcal{D}(A_0(p))$ does not depend on $p$. We note that the smallest value in $\left\{\hat{\lambda}_i\right\}_{i\in\mathbb{N}_{>0}}$ is $\hat{\lambda}_1=0$ and that it has multiplicity $1$, hence in this case $\lambda=0$ and $M_1=1$. Furthermore, for any $p<0$ we set $q=q(p)=p+c$ for a fixed $c>0$. Such choice implies that $A(p)-q=\Delta_\text{N}-c$ and that the operator $D$ is independent of $p$. From the shape of $\mathcal{X}$ we know that $D$ is also bounded \cite{da1996ergodicity}.\\
It is also known, from \cite[Theorem 13.3.6]{da1996ergodicity}, that, for certain operators $B$, the conditions \eqref{eq:condition} and \eqref{eq:condition2} hold. It follows that the solution of \eqref{eq:fast_linear} assumes values in $\mathcal{H}$. We can then write the covariance operator at time $t>0$ as
 \begin{equation*}
    V_t:=\int_0^t (\Delta_\text{N}-c) e^{(\Delta_\text{N}+p) s} D(p) B B^* D(p)^* e^{(\Delta_\text{N}+p) s} (\Delta_\text{N}-c) \text{d}s\;.
\end{equation*}
We can also state that Theorem \ref{thm:general}, Theorem \ref{thm:second} and Corollary \ref{cor:autocorrelation} hold. In particular for any $f_1,f_2\in\mathcal{D}\left(\Delta_\text{N}\right)$ such that $\Pi_{1}(\lambda)^* f_1 \not\equiv 0 \not\equiv \Pi_{1}(\lambda)^* f_2$ and $\tau\geq0$,
\begin{equation*}
    \left\lvert\left\langle f_1, V_\infty^\tau f_2\right\rangle \right\rvert
    = \Theta\left(-\frac{1}{ p} \right)
\end{equation*}
as $p\to 0^{-}$.
\end{example}

\begin{example} \label{ex: dirichlet}

We fix $\mathcal{X}=[0,L]$ and $\gamma(p)=\gamma_\text{D}$ in \eqref{eq:fast_linear} such that $\gamma_\text{D}$ requires the solutions of the system to satisfy homogeneous Dirichlet boundary conditions on the interval. Similarly to the previous example, we set $A(p)=\Delta+p$ and $A_0(p)=\Delta_\text{D}+p$, for 
\begin{equation*}
    \Delta_\text{D} f = \Delta f \text{, for any }
    f\in\mathcal{D}(\Delta_\text{D}):=\left\{v\in \mathcal{H}^2(\mathcal{X}):\; \gamma_\text{D}\; v(x)=0 \;,\;x\in\partial\mathcal{X}\right\}\;.
\end{equation*}
The negative self-adjoint operator $\Delta_\text{D}$ has discrete spectrum $\left\{-\hat{\lambda}_i\right\}_{i\in\mathbb{N}_{>0}}$ coupled to the eigenbasis $\left\{\hat{e}_i\right\}_{i\in\mathbb{N}_{>0}}$ in $\mathcal{H}$. The domain $\mathcal{D}(A_0(p))$ is independent of $p$. We label the smallest value in $\left\{\hat{\lambda}_i\right\}_{i\in\mathbb{N}_{>0}}$ as $\hat{\lambda}_1$, which has multiplicity $1$. Hence, in this case, the threshold is $\lambda=\hat{\lambda}_1$ and $M_1=1$. For any $p<\hat{\lambda}_1$ we set $q=q(p)=p+c$ for a fixed $c>0$, which implies that $A(p)-q=\Delta_\text{D}-c$ and that the operator $D$ is independent of $p$. We know from \cite{da1993evolution} that $D$ is bounded, yet \eqref{eq:fast_linear} does not have solutions in $\mathcal{H}$. The system admits nevertheless solutions in $\mathcal{H}^\alpha(\mathcal{X})$, the Sobolev space of degree $\alpha$, for $\alpha<-\frac{1}{2}$. The fact that $\alpha$ is required to be negative implies that \eqref{eq:condition} does not hold. Following the same step as the proof of Lemma \ref{lm:lyap}, a generalized Lyapunov equation of the form
\begin{align*} 
    & \left\langle (-A_0(p)) U(p) v, V_\infty^\tau U(p) w \right\rangle
    + \left\langle U(p) v, V_\infty^\tau (-A_0(p)) U(p) w \right\rangle \\
    &= - \left\langle U(p) v, e^{A_0(p) \tau} (A_0(p)-q) D(p) B B^* D(p)^* (A_0(p)-q) U(p) w \right\rangle\;, \nonumber
\end{align*}
can be proven for weight $U(p)=(-A_0(p))^{\frac{\alpha}{2}}$, functions $U(p) v, U(p) w \in \mathcal{D}(A_0(p))$ and any $\tau\geq0$. We can then obtain
\begin{equation*}
    \left\langle U(p) \hat{e}_i, V_\infty^\tau U(p) \hat{e}_j \right\rangle =-\frac{e^{-\hat\lambda_i \tau} \left(-\hat{\lambda}_i-c\right)\left(-\hat{\lambda}_i-c\right)
    \left((-\hat{\lambda}_i +p) (-\hat{\lambda}_j +p)\right)^{\frac{\alpha}{2}}
    }{
    \left(-\hat{\lambda}_i-\hat{\lambda}_i+2p\right)}
    \left\langle \hat{e}_i,  D(p) B B^* D(p)^* \hat{e}_j \right\rangle \;.
\end{equation*}
Following the steps in Theorem \ref{thm:general} and Theorem \ref{thm:second}, it follows
\begin{equation} \label{eq:dirichlet_1}
    \left\lvert\left\langle U(p) \hat{e}_i, V_\infty^\tau U(p) \hat{e}_j\right\rangle \right\rvert 
    = \Theta\left(\left\lvert 
    \frac{\left((-\hat{\lambda}_i +p) (-\hat{\lambda}_j +p)\right)^{\frac{\alpha}{2}}
    }{
    \left(-\hat{\lambda}_i-\hat{\lambda}_i+2p\right)}
    \right\rvert\right)
\end{equation}
and, for any $U(p) f_1, U(p) f_2\in\mathcal{D}(A_0(p))$ such that $\Pi_1(\lambda)^* f_1 \not\equiv 0 \not\equiv \Pi_1(\lambda)^* f_2$, $f_1$ and $f_2$ are finite linear combinations of eigenfunctions $\left\{\hat{e}_i\right\}_{i\in\mathbb{N}_{>0}}$ and $\tau\geq0$,
\begin{equation} \label{eq:dirichlet_2}
    \left\lvert\left\langle U(p) f_1, V_\infty^\tau U(p) f_2\right\rangle \right\rvert
    = \Theta\left(\left\lvert 
    \left( \hat{\lambda}_1 -p \right)^{-1+\alpha}
    \right\rvert\right)
\end{equation}
as $p\to\hat{\lambda}_1^-$.
\end{example}

\begin{remark}
Weighting the norm in $\mathcal{H}$ through the use of the operator $U(p)=(-A_0(p))^{\frac{\alpha}{2}}$ leads to early-warning signs in \eqref{eq:dirichlet_1} and \eqref{eq:dirichlet_2} with a higher rate of divergence than the ones presented in Theorem \ref{thm:general} and Theorem \ref{thm:second}. In order to avoid such behaviour induced by the convergence of the highest eigenvalue of $A_0(p)$ to the imaginary axis, the operator $(-A_0(p)+q)^{\frac{\alpha}{2}}=(-\Delta_\text{D}+c)^{\frac{\alpha}{2}}=U$ can be chosen as a weight. From \cite{da1993evolution} and assuming
\begin{align*}
    & g_1,g_2\in \mathcal{H}\\
    &f_1=(-\Delta_\text{D}+c)^{-\frac{1}{4}} g_1\in\mathcal{D}(A_0(p))\;,\\
    &f_2=(-\Delta_\text{D}+c)^{-\frac{1}{4}}g_2\in\mathcal{D}(A_0(p))\;,
\end{align*} 
it holds $\left\langle f_1, V_\infty^\tau f_2 \right\rangle<\infty$ for any $p<\lambda$. A numerical analysis of the early-warning signs is possible as the functions $f_1$ and $f_2$ can be assumed to approximate elements in $O_M(p)^*$, for a sufficiently large $M\in\mathbb{N}_{>0}$ and any $p<\lambda$, due to the completeness of the eigenfunctions of $A_0(p)$ in $\mathcal{H}$.
\end{remark}

\subsection{From theory to practical applications}
The early-warning signs presented in Corollary \ref{cor:autocorrelation} generalize the tools employed in finite-dimensional models as they consider the effect of spatial components and noise on the boundary of the domain. As an example, such early-warning signs are employed in climate science. Specific models study the collapse of the Atlantic Meridional Overturning Circulation \cite{boers2021observation,ditlevsen2023warning,van2024physics} and the melting of the Western Greenland Ice Sheet \cite{boers2021critical}. Such tipping points are often associated to fold bifurcations.\medskip

The application of the time-asymptotic autocovariance and autocorrelation as early-warning signs on real-life models relies on Birkhoff's Ergodic Theorem, which states that, for fixed $p<\lambda$ and ergodic solutions $u$ of \eqref{eq:fast_linear}, the time-asymptotic autocovariance equals the asymptotic temporal autocovariance, i.e.,
\begin{equation*}
    \left\langle f_1, V_\infty^\tau f_2 \right\rangle 
    = \int_0^\infty \left\langle u(\cdot,s+\tau),f_1\right\rangle 
    \left\langle u(\cdot,s),f_2\right\rangle
    \text{d}s\;,
\end{equation*}
and the time-asymptotic autocorrelation corresponds to the asymptotic temporal autocorrelation, i.e.,
\begin{equation*}
    \hat{V}_\infty^\tau\left( f_1, f_2 \right) 
    = \frac{\int_0^\infty \left\langle u(\cdot,s+\tau),f_1\right\rangle 
    \left\langle u(\cdot,s),f_2\right\rangle
    \text{d}s}{\int_0^\infty \left\langle u(\cdot,s),f_1\right\rangle 
    \left\langle u(\cdot,s),f_2\right\rangle
    \text{d}s}
    \;,
\end{equation*}
for any $f_1,f_2\in\mathcal{D}(A_0(p)^*)$ such that $\left\langle f_1, V_\infty f_2 \right\rangle\neq0$. The temporal autocovariance and autocorrelation along functions $f_1$ and $f_2$ provide practical observables as such quantities can be extracted from time series and real-life data. As an example, the choice of $f_1$ and $f_2$ as indicator functions implies the correlation and autocorrelation of the average of some of the components in $u$ on the supports of the observed functions \cite{bernuzzi2023early}. A concrete example of this approach is described in Section~\ref{ssec:Bous}. For any fixed $p\leq\lambda$ and under the assumption of completeness of the generalized eigenfunctions of $A_0(p)$, the functions $f_1$ and $f_2$ can also be assumed to be in $O_M(p)^*$, for a sufficiently large $M\in\mathbb{N}_{>0}$.

\subsection{A Boussinesq model with noise boundary conditions}\label{ssec:Bous}
The Boussinesq model, studied in \cite{dijkstra1997symmetry}, describes flows in a two-dimensional region of the ocean. Such area is defined by the spatial variables $(x_1,x_2)\in[-H,0]\times[0,L]$, for depth $H$ and latitude length $L$. The scaled and non-dimensionalized variables that define the model are the salinity $S$, the temperature $T$, the vorticity $\omega$ and the streamfunction $\psi$. The two-dimensional model is described as follows:
\begin{align} \label{bous}
    Pr^{-1} \left( \frac{\partial \omega}{\partial t} + u \frac{\partial \omega}{\partial x_2} + w \frac{\partial \omega}{\partial x_1} \right) &= \Delta \omega + Ra \left( \frac{\partial T}{\partial x_2} - \frac{\partial S}{\partial x_2} \right) \;,\nonumber\\
    \omega = - \Delta \psi \qquad , \qquad u &= \frac{\partial \psi}{\partial x_1} \qquad , \qquad w = -\frac{\partial \psi}{\partial x_2} \;,\nonumber\\
    \frac{\partial T}{\partial t} + u\frac{\partial T}{\partial x_2} + w\frac{\partial T}{\partial x_1} &= \Delta T \;,\\
    \frac{\partial S}{\partial t} + u\frac{\partial S}{\partial x_2} + w\frac{\partial S}{\partial x_1} &= Le^{-1}\Delta \nonumber S \;;
\end{align}
lateral boundary conditions for any $x_1\in(-H,0)$ and $t\geq0$ as
\begin{align}
    \label{bound1}
    x_2=0,L\;&: \qquad \psi(x_1,x_2,t)=\omega(x_1,x_2,t)=\frac{\partial S}{\partial x_2}(x_1,x_2,t)=\frac{\partial T}{\partial x_2}(x_1,x_2,t)=0\;;
\end{align}
conditions at the ocean floor for any $x_2\in(0,L)$ and $t\geq0$ as
\begin{align}
    \label{bound2}
    x_1=-H\;&: \qquad \psi(x_1,x_2,t)=\omega(x_1,x_2,t)=\frac{\partial S}{\partial x_1}(x_1,x_2,t)=\frac{\partial T}{\partial x_1}(x_1,x_2,t)=0\;;
\end{align}
conditions at the surface for any $x_2\in(0,L)$ and $t\geq0$ as
\begin{align}
    \label{bound3}
    &\psi(x_1,x_2,t)=\omega(x_1,x_2,t)=0\;, \nonumber\\
    x_1=0\;: \qquad &\frac{\partial S}{\partial x_1}(x_1,x_2,t)=p \left(Q_S(x_2) + \nu V_S(x_2) \right)+\sigma Q_S(x_2) \dot{W}(x_2,t)\;, \\
    &\frac{\partial T}{\partial x_1}(x_1,x_2,t)=-\kappa(T(x_1,x_2,t)-T_S(x_2)+\delta)\;.\nonumber
\end{align}
The initial conditions of the model are denoted as
\begin{equation*}
    u_0(x_1,x_2)=\left( \psi(x_1,x_2,0),\omega(x_1,x_2,0),T(x_1,x_2,0),S(x_1,x_2,0)\right)\;,
\end{equation*}
for any $x_1\in[-H,0]$ and $x_2\in[0,L]$. The term $\dot{W}$ indicates Gaussian white noise, in space and time, on the surface boundary. In the following SPDEs, the noise intensity is set as $\sigma=0.01$, unless stated otherwise. The Prandtl number $Pr$ and the Lewis number $Le$ are fixed as $Pr=2.25$ and $Le=1$. Following \cite{dijkstra1997symmetry}, we set $H=1$. We use $\Delta$ to denote the two-dimensional Laplace operator on $[-H,0]\times[0,L]$. For clarity, Figure \ref{fig:shape_bounds} displays the model set-up. The forcing functions are
\begin{align*}
    Q_S(x_2)&=3 \text{cos} \left( 2 \pi \left(\frac{x_2}{L}-\frac{1}{2}\right) \right) \;,\\
    V_S(x_2)&= -\text{sin}\left( \pi \left(\frac{x_2}{L} - \frac{1}{2} \right) \right),
\end{align*}
and
\begin{equation*}
    T_S(x_2)=\frac{1}{2} \left( \text{cos} \left( 2 \pi \left( \frac{x_2}{L}-\frac{1}{2} \right) \right) +1 \right)
\end{equation*}
for $x_2\in[0,L]$. The shape of functions $Q_S$, $V_S$ and $T_S$ are shown in Figure \ref{fig:shape_bounds}.

\begin{figure}[h!]
    \centering   
    \subfloat[Functions defining surface conditions in model (S1).]{\begin{overpic}[width= 0.45\textwidth]{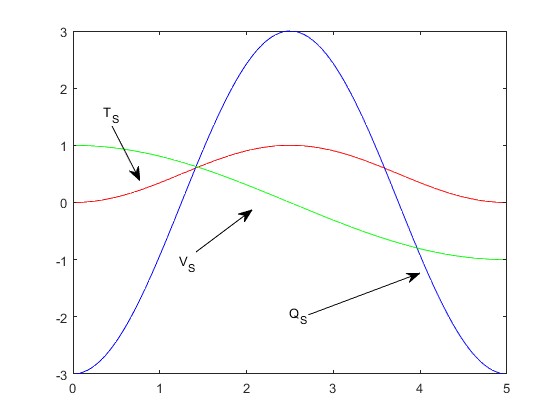}
    \put(500,20){\footnotesize{$x_2$}}
    \end{overpic}}
    \hspace{0mm}
    \subfloat[Boundary conditions of model (S1).]{\begin{overpic}[width= 0.45\textwidth]{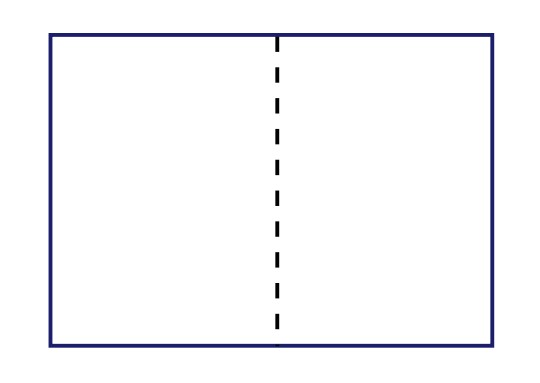}
                \put(250,610){\footnotesize{Surface}}
                \put(250,345){\footnotesize{Ocean}}
                \put(250,80){\footnotesize{Bottom}}
                \put(520,420){\rotatebox{270}{\footnotesize{Equator}}}
                \put(-70,640){\footnotesize{$x_1=0$}}
                \put(-70,60){\footnotesize{$x_1=-H$}}
                \put(50,20){\footnotesize{$x_2=0$}}
                \put(880,20){\footnotesize{$x_2=L$}}
                \put(-70,410){\footnotesize{$S_{x_2}=0$}}
                \put(-70,370){\footnotesize{$T_{x_2}=0$}}
                \put(-70,330){\footnotesize{$\omega=0$}}
                \put(-70,290){\footnotesize{$\psi=0$}}
                \put(930,410){\footnotesize{$S_{x_2}=0$}}
                \put(930,370){\footnotesize{$T_{x_2}=0$}}
                \put(930,330){\footnotesize{$\omega=0$}}
                \put(930,290){\footnotesize{$\psi=0$}}
                \put(200,30){\footnotesize{$S_{x_1}=0$}}
                \put(375,30){\footnotesize{$T_{x_1}=0$}}
                \put(575,30){\footnotesize{$\omega=0$}}
                \put(725,30){\footnotesize{$\psi=0$}}
                \put(100,700){\footnotesize{$S_{x_1}=p (Q_S+\nu V_S)+\sigma Q_S \dot{W}$}}
                \put(650,700){\footnotesize{$T_{x_1}+\kappa T=\kappa T_S$}}
                \put(250,660){\footnotesize{$\omega=0$}}
                \put(700,660){\footnotesize{$\psi=0$}}
    \end{overpic}}
    
    \caption{Panel (a) shows the shape of the forcing functions as functions of $x_2$ for $L=5$. The choice of function $T_S$ affects the temperature forcing, and the freshwater flux forcing is related to $Q_S$ and $V_S$. The function $V_S$ provides asymmetry in $x_2$ on the boundary conditions.\\
    The Boussinesq model (S1) setup and its boundary conditions are displayed in panel (b). The partial derivatives are indicated as $S_{x_1},T_{x_1},S_{x_2},T_{x_2}$. The boundary condition for the temperature at the surface is a Newtonian cooling condition, or Robin condition. Following \cite{baars2017continuation}, we insert white noise on the surface boundary condition of the salinity variable.}
    \label{fig:shape_bounds}
\end{figure}

We set a bifurcation threshold $\lambda$. For fixed $p<\lambda$ we pick a deterministically stable equilibrium solution $(\psi_{\ast},\omega_{\ast},T_{\ast},S_{\ast})$ of \eqref{bous}, \eqref{bound1}, \eqref{bound2} and \eqref{bound3} with $\sigma=0$. We can then linearize the system locally, thus obtaining
\begin{align} \label{bous_lin_syst}
    \begin{pmatrix}
    0 \\\\ \frac{\partial \omega}{\partial t} \\\\ \frac{\partial T}{\partial t} \\\\ \frac{\partial S}{\partial t}
    \end{pmatrix}
    = A(\psi_{\ast},\omega_{\ast},T_{\ast},S_{\ast})
    \begin{pmatrix}
    \psi \\\\ \omega \\\\ T \\\\ S
    \end{pmatrix}
\end{align}
and linearized boundary conditions \eqref{bound1}, \eqref{bound2} and
\begin{align}
    \label{new_bound3}
    &\psi(x_1,x_2,t)=\omega(x_1,x_2,t)=0\;, \nonumber\\
    x_1=0\;: \qquad &\frac{\partial S}{\partial x_1}(x_1,x_2,t)= \sigma Q_S(x_2) \dot{W}(x_2,t)\;, \\
    &\frac{\partial T}{\partial x_1}(x_1,x_2,t)+\kappa T(x_1,x_2,t)=0\;.\nonumber
\end{align}
for any $x_2\in(0,L)$ and $t\geq0$. The original system \eqref{bous} with boundary conditions \eqref{bound1}, \eqref{bound2} and \eqref{bound3} is labeled (S0) under the assumption $\sigma=0$ and (S1) when we use as an initial condition $u_0=(\psi_{\ast},\omega_{\ast},T_{\ast},S_{\ast})$. The linearized system \eqref{bous_lin_syst} with boundary conditions \eqref{bound1}, \eqref{bound2}, \eqref{new_bound3} and initial condition in the null function $u_0\equiv0$ is denoted as (S2).

The operator $A(\psi_{\ast},\omega_{\ast},T_{\ast},S_{\ast})$ is of the form
\begin{align*}
    A(\psi_{\ast},\omega_{\ast},T_{\ast},S_{\ast})
    =\left(\begin{array}{c|c}
    A_{11} & A_{12}\\
    \hline
    A_{21}(\omega_{\ast},T_{\ast},S_{\ast}) & A_{22}(\psi_{\ast})
    \end{array}\right)
    \;,
\end{align*}
for: $A_{11}=\Delta$, the Laplace operator;
\begin{align*}
    A_{12} 
    = \left(\begin{array}{c c c}
        \operatorname{I} & 0 & 0
    \end{array} \right)\;,
\end{align*}
with $\operatorname{I}$, the identity operator, and $0$, the null operator;
\begin{align*}
    A_{21}(\omega_{\ast},T_{\ast},S_{\ast})
    = \begin{pmatrix}
    -\frac{\partial \omega_{\ast}}{\partial x_2} \frac{\partial}{\partial x_1} + \frac{\partial \omega_{\ast}}{\partial x_1} \frac{\partial}{\partial x_2}\\\\
    -\frac{\partial T_{\ast}}{\partial x_2} \frac{\partial}{\partial x_1} + \frac{\partial T_{\ast}}{\partial x_1} \frac{\partial}{\partial x_2}\\\\
    -\frac{\partial S_{\ast}}{\partial x_2} \frac{\partial}{\partial x_1} + \frac{\partial S_{\ast}}{\partial x_1} \frac{\partial}{\partial x_2}
    \end{pmatrix} \;;
\end{align*}
for the operator
\begin{align*}
    A_{22}(\psi_{\ast})
    &=\begin{pmatrix}
    -\frac{\partial \psi_{\ast}}{\partial x_1} \frac{\partial}{\partial x_2} + \frac{\partial \psi_{\ast}}{\partial x_2} \frac{\partial}{\partial x_1} + Pr \Delta
    & Pr \; Ra \frac{\partial}{\partial x_2}
    & - Pr \; Ra \frac{\partial}{\partial x_2} \\
    & & \\
    0
    & -\frac{\partial \psi_{\ast}}{\partial x_1} \frac{\partial}{\partial x_2} + \frac{\partial \psi_{\ast}}{\partial x_2} \frac{\partial}{\partial x_1} + \Delta
    & 0 \\
    & & \\
    0
    & 0
    & -\frac{\partial \psi_{\ast}}{\partial x_1} \frac{\partial}{\partial x_2} + \frac{\partial \psi_{\ast}}{\partial x_2} \frac{\partial}{\partial x_1} + Le^{-1} \Delta
    \end{pmatrix} \;.
\end{align*}
Solutions $(\psi,\omega,T,S)$ of \eqref{bous_lin_syst} satisfy
\begin{align*}
    \begin{cases}
    -A_{11} \psi = A_{12} \begin{pmatrix}
    \omega\\T\\S
    \end{pmatrix} \;, \\
    \frac{\partial}{\partial t} \begin{pmatrix}
    \omega\\T\\S
    \end{pmatrix} = A_{21}(\omega_{\ast},T_{\ast},S_{\ast}) \psi + A_{22}(\psi_{\ast}) \begin{pmatrix}
    \omega\\T\\S
    \end{pmatrix} \;.
    \end{cases}
\end{align*}
Under boundary conditions \eqref{bound1}, \eqref{bound2} and \eqref{new_bound3}, $A_{11}$ is invertible and the Schur complement of $A(\psi_{\ast},\omega_{\ast},T_{\ast},S_{\ast})$ is defined as
\begin{align} \label{eq:def_AS}
    &A_\text{S}(\psi_{\ast},\omega_{\ast},T_{\ast},S_{\ast}):=A_{22}(\psi_{\ast})-A_{21}(\omega_{\ast},T_{\ast},S_{\ast}) A_{11}^{-1} A_{12} \\
    &=\begin{pmatrix}
    -\frac{\partial \psi_{\ast}}{\partial x_1} \frac{\partial}{\partial x_2} + \frac{\partial \psi_{\ast}}{\partial x_2} \frac{\partial}{\partial x_1} + Pr \Delta + \left( \frac{\partial \omega_{\ast}}{\partial x_2} \frac{\partial}{\partial x_1}  - \frac{\partial \omega_{\ast}}{\partial x_1} \frac{\partial}{\partial x_2} \right) \Delta^{-1}
    & Pr \; Ra \frac{\partial}{\partial x_2}
    & - Pr \; Ra \frac{\partial}{\partial x_2} \\
    & & \\
    \left(\frac{\partial T_{\ast}}{\partial x_2} \frac{\partial}{\partial x_1} -  \frac{\partial T_{\ast}}{\partial x_1} \frac{\partial}{\partial x_2} \right) \Delta^{-1}
    & -\frac{\partial \psi_{\ast}}{\partial x_1} \frac{\partial}{\partial x_2} + \frac{\partial \psi_{\ast}}{\partial x_2} \frac{\partial}{\partial x_1} + \Delta
    & 0 \\
    & & & \\
    \left(\frac{\partial S_{\ast}}{\partial x_2} \frac{\partial}{\partial x_1} - \frac{\partial S_{\ast}}{\partial x_1} \frac{\partial}{\partial x_2} \right) \Delta^{-1}
    & 0
    & -\frac{\partial \psi_{\ast}}{\partial x_1} \frac{\partial}{\partial x_2} + \frac{\partial \psi_{\ast}}{\partial x_2} \frac{\partial}{\partial x_1} + Le^{-1} \Delta
    \end{pmatrix} \;. \nonumber
\end{align}
We introduce also 
\begin{equation} \label{eq:A0_1}
    A_0(\psi_{\ast},\omega_{\ast},T_{\ast},S_{\ast})=A_\text{S}(\psi_{\ast},\omega_{\ast},T_{\ast},S_{\ast})
\end{equation} 
for 
\begin{equation} \label{eq:A0_2}
    \mathcal{D}\left(A_0(\psi_{\ast},\omega_{\ast},T_{\ast},S_{\ast})\right)
    =\mathcal{D}\left(A_S(\psi_{\ast},\omega_{\ast},T_{\ast},S_{\ast})\right)\cap
    \left\{\left(\omega,T,S\right) \quad \text{that satisfy \eqref{bound1}, \eqref{bound2} and \eqref{new_bound3} for $\sigma=0$}\; \right\}\;,
\end{equation}
and any steady solution $(\psi_{\ast},\omega_{\ast},T_{\ast},S_{\ast})$ of (S0).
We can then consider the linearized problem 
\begin{align*}
    \frac{\partial}{\partial t} \begin{pmatrix}
    \omega\\T\\S
    \end{pmatrix} = A_\text{S}(\psi_{\ast},\omega_{\ast},T_{\ast},S_{\ast}) \begin{pmatrix}
    \omega\\T\\S
    \end{pmatrix}
\end{align*}
with boundary conditions \eqref{bound1}, \eqref{bound2} and \eqref{new_bound3}. Setting an initial condition in $\mathcal{D}(A_\text{S}(\psi_{\ast},\omega_{\ast},T_{\ast},S_{\ast}))$, the system is of the form \eqref{eq:fast_linear}. In particular, we can now apply Theorem \ref{thm:general}, Theorem \ref{thm:second} and Corollary \ref{cor:autocorrelation}. The results are discussed and compared to numerical methods in the examples to follow.\medskip

The variable $S$ appears in the model (S1) only under the application of derivative operators, thus indicating the invariance of the time derivatives and boundary conditions under shifting by a constant along the $S$ component. For all stable solutions of system (S0), $(\psi_{\ast},\omega_{\ast},T_{\ast},S_{\ast})$, the eigenvalue with highest real part of $A_0(\psi_{\ast},\omega_{\ast},T_{\ast},S_{\ast})$ is $\lambda_1^{(p)}=0$ for any $p<\lambda$. Its corresponding eigenfunction $e_{1,1}^{(p)}$ is characterized by
\begin{align} \label{eq:e_11}
    e_{1,1}^{(p)}(x_1,x_2)=
    \begin{pmatrix}
        0\\0\\c
    \end{pmatrix}\;,
\end{align}
for $c\in\mathbb{R}$, for any $p\leq\lambda$ and $(x_1,x_2)\in[-H,0]\times[0,L]$. In the following examples, we ensure that the early-warning sign is not affected by the presence of an eigenvalue with zero real part.

\begin{figure}[h!]
    \centering
    \subfloat{
        \begin{overpic}[scale=0.15]{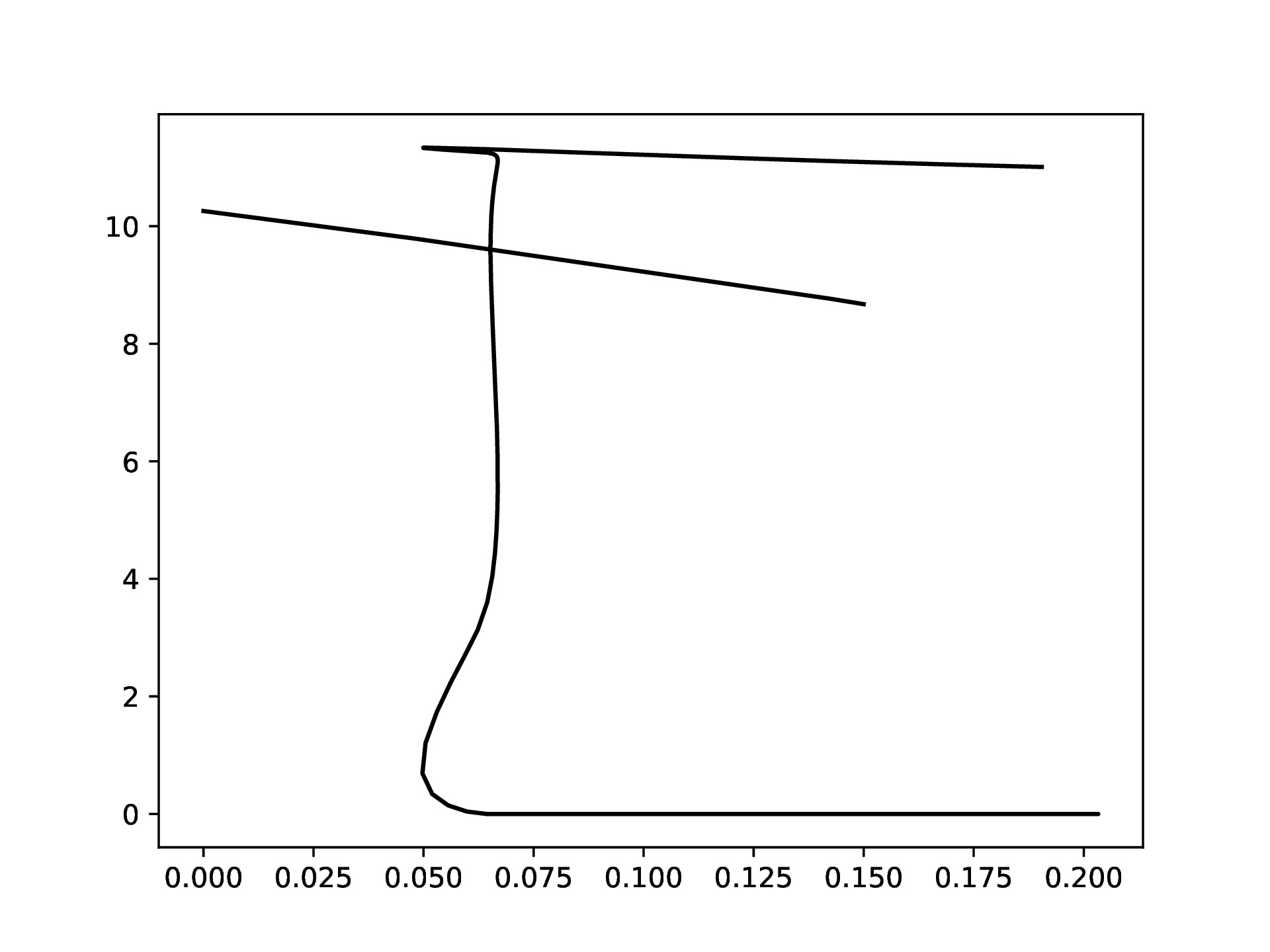}
        \put(190,520){\footnotesize{TH}}
        \put(550,590){\footnotesize{S}}
        \put(550,120){\footnotesize{N}}
        \put(650,170){\footnotesize{\textit{stable}}}
        \put(680,160){\vector(0.1,-1){4}}
        \put(650,540){\footnotesize{\textit{stable}}}
        \put(680,570){\vector(0.1,1){4}}
        \put(260,500){\footnotesize{\textit{stable}}}
        \put(290,520){\vector(0.1,1){4}}
        \put(450,560){\footnotesize{\textit{stable}}}
        \put(440,570){\vector(-1,0.1){50}}
        \put(450,360){\footnotesize{\textit{stable}}}
        \put(440,360){\vector(-1,-0.1){50}}
        \put(480,460){\footnotesize{\textit{unstable}}}
        \put(520,480){\vector(0.1,1){4}}
        \put(210,600){\footnotesize{\textit{unstable}}}
        \put(320,620){\vector(1,0.1){50}}
        \put(210,240){\footnotesize{\textit{unstable}}}
        \put(320,240){\vector(1,-0.1){50}}
        \put(0,400){\rotatebox{270}{\footnotesize{max$\left(\psi_\ast\right)$}}}
        \put(500,0){\footnotesize{$p$}}
        \end{overpic}}
    \caption{Section of the bifurcation diagram of (S0) under the assumptions $Ra=10^4$, $\kappa=10^2$, $L=10$ and $\nu=\delta=0$, for the parameter $p$. The thermally dominated 2-cell state, TH-solution, is stable for $p<\lambda\approx 0.0638$ and unstable for $p>\lambda$. The value $\lambda$ indicates a supercritical pitchfork threshold. The system is yet multistable for $p>\tilde{\lambda}\approx0.049$, as two saddle-node bifurcations indicate the presence of the southward sinking solution, S-solution, and the northward sinking solution, N-solution.\\
    The diagram is obtained with the Python library Transiflow \cite{transiflow} through a pseudo-arclength continuation method and uniform resolution grid $(\mathtt{M},\mathtt{N})=(30,100)$, similar as \cite{dijkstra1997symmetry}.}
    \label{fig:bif_pitchfork}
\end{figure}

\begin{example}
In this example, we assume that $L=10$, $\nu=\delta=0$, and $Ra=10^4$. The value $\kappa$, a ratio of the diffusive time scale of vertical heat transfer and the relaxation time scale, is assumed as $\kappa=100$. The bifurcation diagram of system (S0) is displayed in Figure \ref{fig:bif_pitchfork}.  It is shown that a supercritical pitchfork bifurcation threshold is present at $p=\lambda\approx0.0638$, where a stable solution with components $T$ and $S$ symmetric along the equator $x_2=\frac{L}{2}$, the thermally dominated 2-cell state, loses its stability. Such solution is labeled as $(\psi_\text{TH},\omega_\text{TH},T_\text{TH},S_\text{TH})$, chosen as $(\psi_{\ast},\omega_{\ast},T_{\ast},S_{\ast})$ and shown in the first row of Figure \ref{fig:solution_fixed} for $p=0.055$. At $p=\tilde{\lambda}\approx0.049$, two pairs of solutions appear from saddle-node bifurcations, inducing multistability in the system for $\tilde{\lambda}<p\leq\lambda$.  One of the stable solutions that arises from such threshold, the southward sinking solution, is shown in the second row of Figure \ref{fig:solution_fixed}. The northward sinking solution can be constructed from the latter. Proof of such a statement is provided in Appendix \ref{app:math_appendix}.

\begin{figure}[h!]
    \centering
    \subfloat[Streamfunction $\psi_\text{TH}$]{\begin{overpic}[scale=0.22]{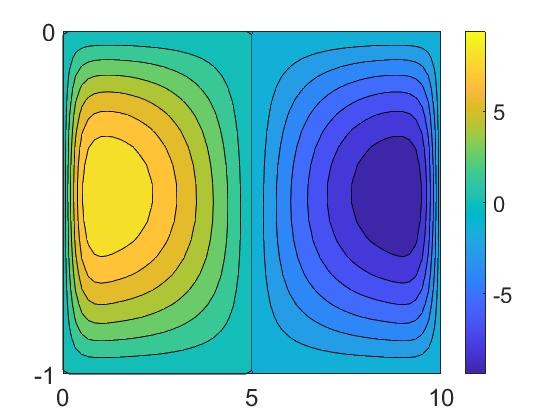}
    \put(-20,380){\rotatebox{270}{\footnotesize{$x_1$}}}
    \put(530,-20){\footnotesize{$x_2$}}
    \end{overpic}}
    \hspace{0mm}
    \subfloat[Vorticity $\omega_\text{TH}$]{\begin{overpic}[scale=0.22]{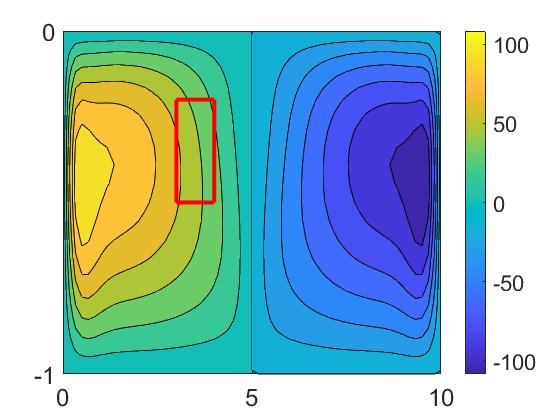}
    \put(-20,380){\rotatebox{270}{\footnotesize{$x_1$}}}
    \put(530,-20){\footnotesize{$x_2$}}
    \end{overpic}}
    \hspace{0mm}
    \subfloat[Temperature $T_\text{TH}$]{\begin{overpic}[scale=0.22]{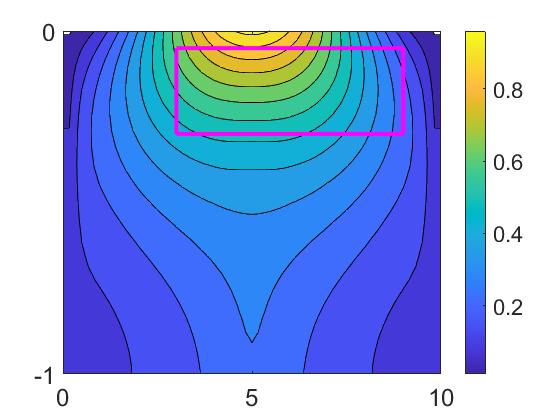}
    \put(-20,380){\rotatebox{270}{\footnotesize{$x_1$}}}
    \put(530,-20){\footnotesize{$x_2$}}
    \end{overpic}}
    \hspace{0mm}
    \subfloat[Salinity $S_\text{TH}$]{\begin{overpic}[scale=0.22]{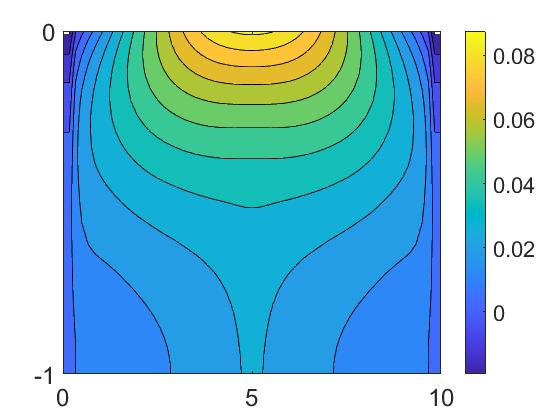}
    \put(-20,380){\rotatebox{270}{\footnotesize{$x_1$}}}
    \put(530,-20){\footnotesize{$x_2$}}
    \end{overpic}}
    
    \vspace{0mm}
    
    \subfloat[Streamfunction $\psi_\text{S}$]{\begin{overpic}[scale=0.22]{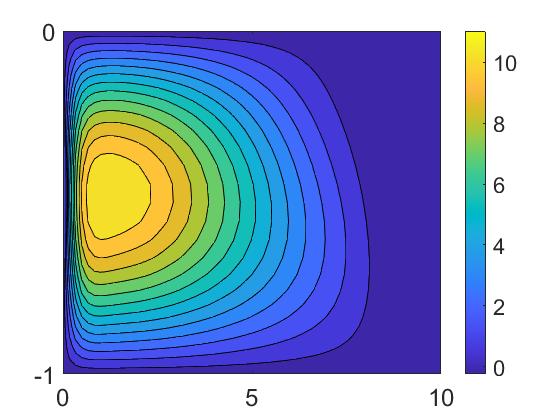}
    \put(-20,380){\rotatebox{270}{\footnotesize{$x_1$}}}
    \put(530,-20){\footnotesize{$x_2$}}
    \end{overpic}}
    \hspace{0mm}
    \subfloat[Vorticity $\omega_\text{S}$]{\begin{overpic}[scale=0.22]{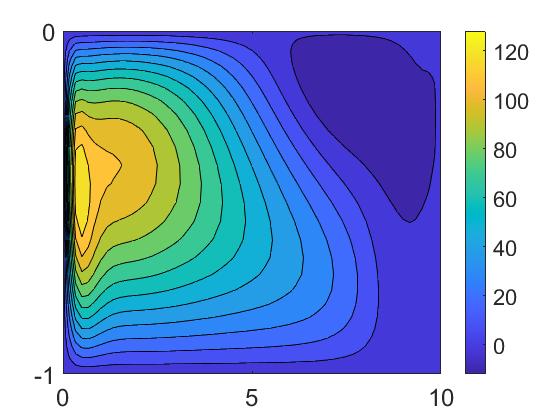}
    \put(-20,380){\rotatebox{270}{\footnotesize{$x_1$}}}
    \put(530,-20){\footnotesize{$x_2$}}
    \end{overpic}}
    \hspace{0mm}
    \subfloat[Temperature $T_\text{S}$]{\begin{overpic}[scale=0.22]{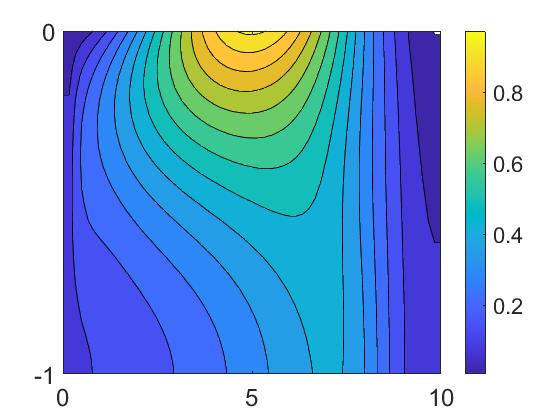}
    \put(-20,380){\rotatebox{270}{\footnotesize{$x_1$}}}
    \put(530,-20){\footnotesize{$x_2$}}
    \end{overpic}}
    \hspace{0mm}
    \subfloat[Salinity $S_\text{S}$]{\begin{overpic}[scale=0.22]{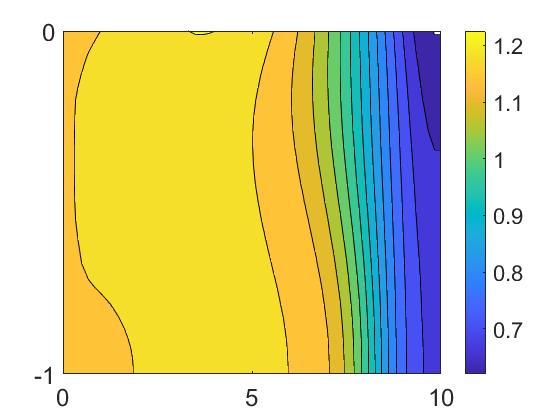}
    \put(-20,380){\rotatebox{270}{\footnotesize{$x_1$}}}
    \put(530,-20){\footnotesize{$x_2$}}
    \end{overpic}}
        \caption{The first row of contour plots presents the simulation of a stable solution $(\psi_\text{TH},\omega_\text{TH},T_\text{TH},S_\text{TH})$ of the system defined by (S0) and assumptions $Ra=10^4$, $\kappa=10^2$, $L=10$ and $\nu=\delta=0$. The parameter leading to a pitchfork bifurcation is taken as $p=0.055$, hence less than the threshold value $\lambda$. The coloured boxes delimit supports of indicator functions, chosen further in the example as $f_1^{(p)}$ and $f_2^{(p)}$, for any $p<\lambda$, in \eqref{eq:main_order_cor} in Theorem \ref{thm:second}. The red box, in the plot of $\omega_\text{TH}$, defines the rectangle $[-0.5,-0.2]\times[3,4]\subset[-H,0]\times[0,L]$ on the domain of $\omega_\text{TH}$. Such shape delimits the support of the indicator function $\mathbbm{1}_{\omega}$. Similarly, the magenta box, in the plot of $T_\text{TH}$, defines the rectangle $[-0.3,-0.05]\times[3,9]\subset[-H,0]\times[0,L]$ on the domain of $T_\text{TH}$, which indicates the support of the indicator function $\mathbbm{1}_{T}$.\\
        The second row presents the southward sinking steady solution $(\psi_\text{S},\omega_\text{S},T_\text{S},S_\text{S})$ at $p=0.055$, under equivalent assumptions. Such a stable solution arises from a saddle-node bifurcation at $p=\tilde{\lambda}\approx0.049$. }
    \label{fig:solution_fixed}
\end{figure} 

The solutions of the original system (S1) and of the linearized system (S2) are obtained through an implicit Euler finite difference method with time step $10^{-2}$ and final time $t_\text{end}$. Similarly to \cite{dijkstra1997symmetry}, $\mathtt{M}=29$ and $\mathtt{N}=59$ internal resolution points have been considered for the intervals $(-H,0)$ and $(0,L)$ respectively. The grid is built by assuming the resolution coordinates $\left\{\mathtt{x}_i\right\}_{i\in\left\{1,\dots,\mathtt{N}\right\}}=\left\{\frac{i\;A}{\mathtt{N}+1}\right\}_{i\in\left\{1,\dots,\mathtt{N}\right\}}$, along the $x_2$ direction, and $\left\{\mathtt{z}_i\right\}_{i\in\left\{1,\dots,\mathtt{M}\right\}}$, along the $x_1$ direction, for
\begin{equation*}
    \mathtt{z}_i:=-\frac{1}{2}-\frac{\text{tanh}\left(-3\left(\mathtt{y}_i+\frac{1}{2}\right)\right)}{2\text{tanh}\left(\frac{3}{2}\right)}
\end{equation*}
and $\left\{\mathtt{y}_i\right\}_{i\in\left\{1,\dots,\mathtt{M}\right\}}=\left\{-1+\frac{i}{\mathtt{M}+1}\right\}_{i\in\left\{1,\dots,\mathtt{M}\right\}}$. Such choice of the grid, similar to the one described in \cite{dijkstra1992structure}, is selected to achieve higher resolution in the proximity to the surface and ocean floor boundary. The numerical techniques employed to obtain first- and second-order derivatives in a non-equidistant grid are described in \cite{veldman1992playing}.

\begin{figure}[h!]
    \centering
    \subfloat{\begin{overpic}[scale=0.5]{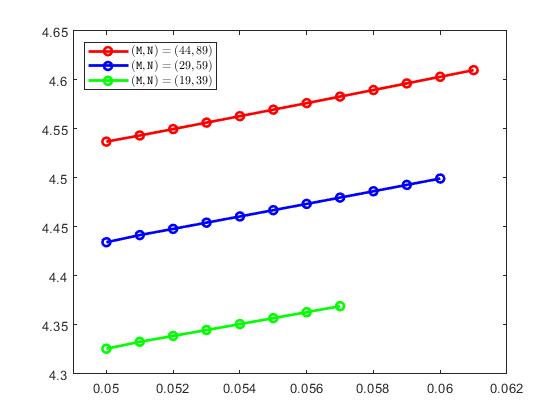}
    \put(500,10){\footnotesize{$p$}}
    \put(10,450){\rotatebox{270}{\footnotesize{$\lambda_2^{(p)}-\lambda_3^{(p)}$}}}
    \end{overpic}}
    \caption{Plot of $\lambda_2^{(p)}-\lambda_3^{(p)}$, the difference between the second and third eigenvalues of $A_0(\psi_\text{TH},\omega_\text{TH},T_\text{TH},S_\text{TH})$ with highest real part, for various resolution grid choices. In green the resolution is assumed to be $(\mathtt{M},\mathtt{N})=(19,39)$, in blue $(\mathtt{M},\mathtt{N})=(29,59)$ and in red $(\mathtt{M},\mathtt{N})=(44,89)$. The differences appear to increase in the proximity of the bifurcation threshold, which assumes lower values for coarse resolution grids, as discussed in \cite{dijkstra1997symmetry}.}
    \label{fig:diff_eigs}
\end{figure}

We consider $(\psi_\text{TH},\omega_\text{TH},T_\text{TH},S_\text{TH})$ as a family of stable steady solutions of (S0) as $p$ approaches a supercritical pitchfork bifurcation at $\lambda$. The shape of the spectrum of the operator $A_0(\psi_\text{TH},\omega_\text{TH},T_\text{TH},S_\text{TH})$ is not known; in particular, its discreteness is not trivial to prove through analytic methods. Following Remark \ref{rem:generalization}, we study the behaviour of the spectrum region with the highest real part as $p$ approaches the bifurcation threshold. The threshold value is obtained numerically with more precision for increased grid resolution of the space. We show in Figure \ref{fig:diff_eigs} the difference between the second and third eigenvalues with highest real part of the numerical approximation, by finite difference method, of $A_0(\psi_\text{TH},\omega_\text{TH},T_\text{TH},S_\text{TH})$ previous to the registered bifurcation threshold, for different values of $\mathtt{N}$ and $\mathtt{M}$. The difference appears to be steadily distant from $0$, indicating a gap between such eigenvalues, and increasing as $p$ approaches the detected bifurcation point. We can, therefore, compare the results of Corollary \ref{cor:autocorrelation} with simulations obtained from the model.

\begin{figure}[h!]
    \centering
    
    \subfloat[Comparison of variance and expected scaling law under white noise.]{\begin{overpic}[scale=0.45]{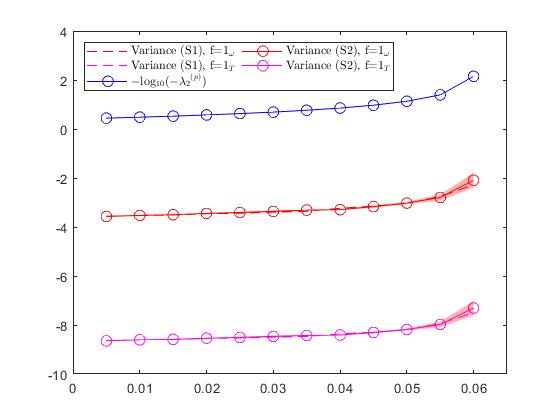}
        \put(500,10){\footnotesize{$p$}}
        \put(30,500){\rotatebox{270}{\footnotesize{$\text{log}_{10}\left(\left\langle f, V_{t_\text{end}} f \right\rangle\right)$}}}
    \end{overpic}
    \label{fig:5a}}
    \hfill
    \subfloat[Coefficients associated to the choice of the observables.]{\begin{overpic}[scale=0.45]{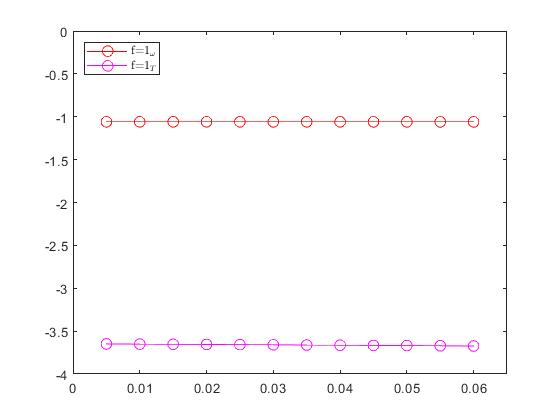}
        \put(500,10){\footnotesize{$p$}}
        \put(0,500){\rotatebox{270}{\footnotesize{$\text{log}_{10}\left(\left\langle f, e_{2,1}^{(p)} \right\rangle\right)$}}}
    \end{overpic}
    \label{fig:5b}}
    \caption{The blue line shows the behaviour of $\text{log}_{10}\left(-\frac{1}{\lambda_2^{(p)}}\right)$, for $\lambda_2^{(p)}$ the second eigenvalue of $A_0(\psi_\text{TH},\omega_\text{TH},T_\text{TH},S_\text{TH})$ with highest real part.\\
    In Figure \ref{fig:5a}, the red and magenta lines refer to means of observables obtained from $5$ run samples, differing from noise realizations. Such lines display respectively the mean of the logarithm of the temporal autocovariance, for time ${t_\text{end}}=10^3$ and $\tau=0$, of solutions of a numerically approximated system projected on indicator functions. The red lines indicate projection on $f=\mathbbm{1}_{\omega}$ and the magenta lines on $f=\mathbbm{1}_{T}$. Such functions are defined in the caption of Figure \ref{fig:solution_fixed}. The solutions in Figure \ref{fig:5a} associated with the dashed lines satisfy (S1), and those related to the solid lines solve (S2).\\
    In Figure \ref{fig:5a}, the circles on the red and magenta solid lines indicate the mean values of observables from the linearized systems. The shaded areas have widths equal to double the corresponding standard deviations. The stable solutions are obtained through the natural parameter continuation method, and their stability is observed by solving the eigenvalue problem corresponding to the numerical approximation of $A_0(\psi_\text{TH},\omega_\text{TH},T_\text{TH},S_\text{TH})$ through the finite difference method. For $p>\tilde{\lambda}\approx0.049$, the sampled solutions of (S1) do not present a jump to a neighbourhood of another stable solution before ${t_\text{end}}$, which suggests that they remain in the basin of attraction of $(\psi_\text{TH},\omega_\text{TH},T_\text{TH},S_\text{TH})$ for such interval of time.\\
    Figure \ref{fig:5b} presents the scalar product in $\mathcal{H}$ of $f=\mathbbm{1}_{\omega}$, in red, and $f=\mathbbm{1}_{T}$, in magenta, with $e_{2,1}^{(p)}$. The values are not heavily dependent on $p$ and do not affect the observation of the rate in Figure \ref{fig:5a}.}
    \label{fig:rate_bous}
\end{figure}

In Figure \ref{fig:5a}, we observe in blue the behaviour of $\text{log}_{10}\left(-\frac{1}{\lambda_2^{(p)}}\right)$, for $\lambda_2^{(p)}$ being the second eigenvalue of $A_0(\psi_\text{TH},\omega_\text{TH},T_\text{TH},S_\text{TH})$ with largest real part. Depending on the grid resolution, such value increases as $p$ approaches the registered bifurcation threshold in $p\approx0.0618$. On the circles shown along the red and magenta solid lines in Figure \ref{fig:5a}, we observe the mean logarithm of the temporal variance, for time ${t_\text{end}}=10^3$, of $5$ solutions, differing from noise realizations, of the numerically approximated linearized system (S2), projected respectively on the indicator functions $\mathbbm{1}_{\omega}$ and $\mathbbm{1}_{T}$ with supports defined by the boxes in Figure \ref{fig:solution_fixed}. From ergodicity, implied by the additive noise term and the diffusion of the system, we can expect the time-asymptotic temporal variance of an observable of a simulation to be equal to the time-asymptotic variance in function space of the solution of the system along the function that defines the observable \cite{da1996ergodicity,KU2}. On the red and magenta dashed lines in Figure \ref{fig:5a}, we display the mean logarithm of the temporal variance in time ${t_\text{end}}$ for $5$ solutions, under different noise samples, of the numerically approximated original system (S1), projected on $\mathbbm{1}_{\omega}$ and $\mathbbm{1}_{T}$ respectively.

\begin{figure}[h!]
    \centering
    
    \subfloat[Vorticity $\omega$ in $e_{2,1}^{(0.055)}$]{\begin{overpic}[scale=0.22]{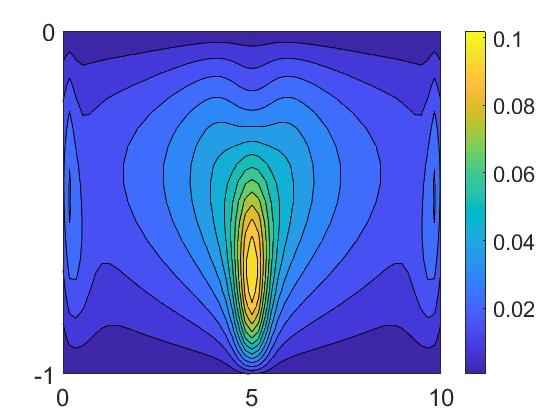}
    \put(-20,380){\rotatebox{270}{\footnotesize{$x_1$}}}
    \put(530,-20){\footnotesize{$x_2$}}
    \end{overpic}}
    \hspace{0mm}
    \subfloat[Temperature $T$ in $e_{2,1}^{(0.055)}$]{\begin{overpic}[scale=0.22]{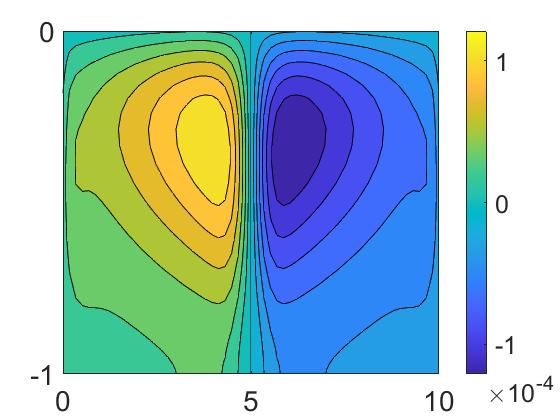}
    \put(-20,380){\rotatebox{270}{\footnotesize{$x_1$}}}
    \put(530,-20){\footnotesize{$x_2$}}
    \end{overpic}}
    \hspace{0mm}
    \subfloat[Salinity $S$ in $e_{2,1}^{(0.055)}$]{\begin{overpic}[scale=0.22]{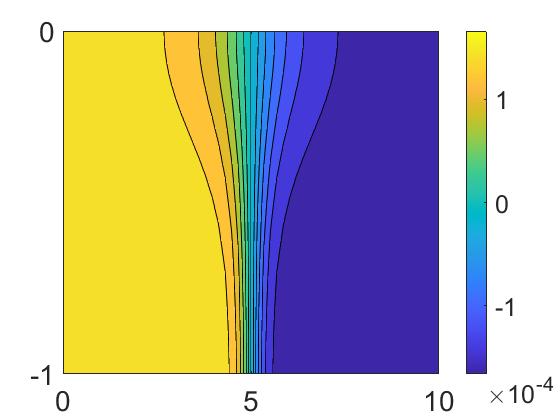}
    \put(-20,380){\rotatebox{270}{\footnotesize{$x_1$}}}
    \put(530,-20){\footnotesize{$x_2$}}
    \end{overpic}}
    
    \vspace{0mm}
    
    \subfloat[Vorticity $\omega$ in ${e_{2,1}^{(0.055)}}^*$]{\begin{overpic}[scale=0.22]{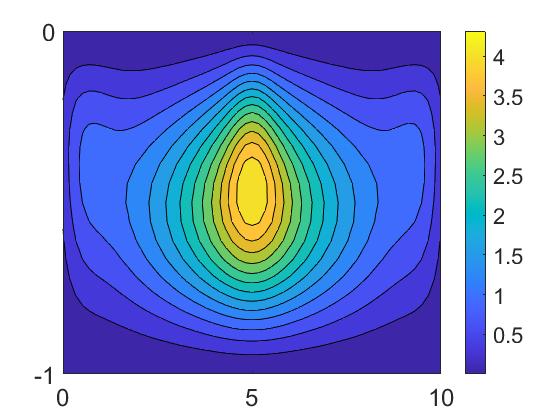}
    \put(-20,380){\rotatebox{270}{\footnotesize{$x_1$}}}
    \put(530,-20){\footnotesize{$x_2$}}
    \end{overpic}}
    \hspace{0mm}
    \subfloat[Temperature $T$ in ${e_{2,1}^{(0.055)}}^*$]{\begin{overpic}[scale=0.22]{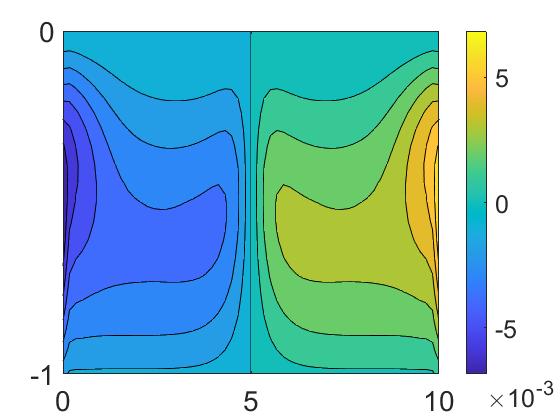}
    \put(-20,380){\rotatebox{270}{\footnotesize{$x_1$}}}
    \put(530,-20){\footnotesize{$x_2$}}
    \end{overpic}}
    \hspace{0mm}
    \subfloat[Salinity $S$ in ${e_{2,1}^{(0.055)}}^*$]{\begin{overpic}[scale=0.22]{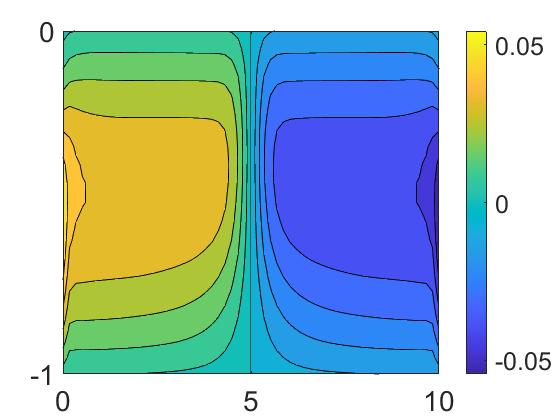}
    \put(-20,380){\rotatebox{270}{\footnotesize{$x_1$}}}
    \put(530,-20){\footnotesize{$x_2$}}
    \end{overpic}}
    \caption{The first and second row of contour plots display the components of the approximations of $e_{2,1}^{(0.055)}$ and ${e_{2,1}^{(0.055)}}^*$, which are respectively the (unique) eigenfunctions of $A_0(\psi_\text{TH},\omega_\text{TH},T_\text{TH},S_\text{TH})$ and $A_0(\psi_\text{TH},\omega_\text{TH},T_\text{TH},S_\text{TH})^*$ associated to the second eigenvalues with highest real part.}
    \label{fig:eigs}
\end{figure} 

The behaviour of the solid and dashed lines of the same colour is similar, as expected up to proximity to the bifurcation. They also resemble, as $p$ approaches the approximated bifurcation threshold, the blue line with the difference of a constant, suggesting an order of divergence as presented in \eqref{eq:main_order_cor}. Such resemblance is yet affected by the choices of functions $\mathbbm{1}_{\omega}$ and $\mathbbm{1}_{T}$. As described in Corollary \ref{cor:autocorrelation}, the time-asymptotic autocovariance with $\tau=0$, diverges for a dense set of functions in $\mathcal{H}$ as $\operatorname{Re}\left( -\lambda_1^{(p)}\right)^{-2 M_1 +1}$, but the chosen direction functions are orthogonal to the unique generalized eigenfunction, $e_{1,1}^{(p)}$, of $A_0(\psi_\text{TH},\omega_\text{TH},T_\text{TH},S_\text{TH})$ associated to $\lambda_1^{(p)}$, as described in \eqref{eq:e_11}. The leading term in the series \eqref{eq:second_EWS} is therefore associated to $\lambda_2^{(p)}$ and the corresponding generalized eigenfunctions of $A_0(\psi_\text{TH},\omega_\text{TH},T_\text{TH},S_\text{TH})$ and its adjoint. The rate displayed by the solid lines in Figure \ref{fig:5a} is, in fact, $\operatorname{Re}\left(- \lambda_2^{(p)}\right)^{-2 M_2 +1}$. In Figure \ref{fig:5b}, the scalar products in $\mathcal{H}$ of $\mathbbm{1}_{\omega}$ and $\mathbbm{1}_{T}$ with the second simple eigenfunction, $e_{2,1}^{(p)}$, of $A_0(\psi_\text{TH},\omega_\text{TH},T_\text{TH},S_\text{TH})$ are presented. Such value does not appear to be highly dependent on $p$; therefore, the choice of observables does not impact greatly the rate of the early-warning sign. For each observed value $p$, there exists only a simple eigenfunction related to $\lambda_2^{(p)}$, which is displayed in Figure \ref{fig:eigs} for $p=0.055$, and $M_2=1$.\\
The divergences in Figure \ref{fig:5a} can also be affected by

\begin{align*}
    \left\lvert\left\langle {e_{2,1}^{(p)}}^*,  D(p) B B^* D(p)^* {e_{2,1}^{(p)}}^* \right\rangle\right\rvert \;.
\end{align*}

The multistability of the system (S0) for $p$ close to the pitchfork bifurcation threshold $\lambda$ requires a careful choice of ${t_\text{end}}$. Such time needs to capture the time-asymptotic property of Lemma \ref{lm:lyap} and still has to be associated with a small probability of jumping from $(\psi_\text{TH},\omega_\text{TH},T_\text{TH},S_\text{TH})$ to another stable solution for any $p<\lambda$ observed. If the second property is not fulfilled, attracting properties in other basins of attraction can be observed, affecting the results in Figure \ref{fig:rate_bous}. The small intensity of the noise in the simulations of the solution of the system (S1) makes such an occurrence rare and not observed in the simulations for $\tilde{\lambda}< p<\lambda$.

\end{example}

\begin{figure}[h!]
    \centering
    \subfloat{
        \begin{overpic}[scale=0.15]{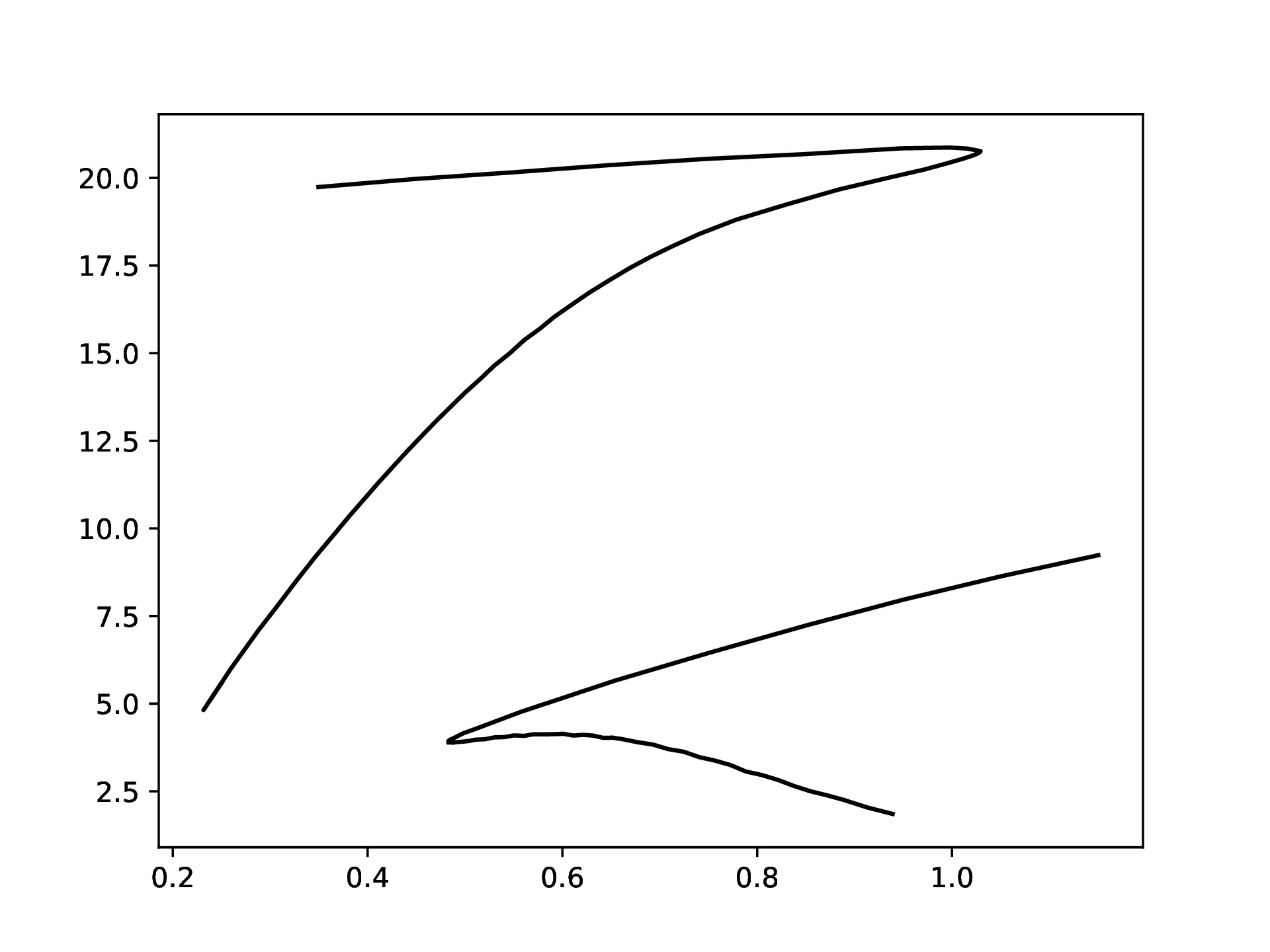}
        \put(490,590){\footnotesize{SS}}
        \put(490,250){\footnotesize{SA}}
        \put(200,550){\footnotesize{\textit{stable}}}
        \put(290,570){\vector(1,0.5){50}}
        \put(750,220){\footnotesize{\textit{stable}}}
        \put(730,230){\vector(-1,0.5){50}}
        \put(750,150){\footnotesize{\textit{unstable}}}
        \put(730,150){\vector(-1,-0.5){50}}
        \put(200,480){\footnotesize{\textit{unstable}}}
        \put(290,460){\vector(1,-0.5){50}}
        \put(800,600){\footnotesize{$L_1$}}
        \put(300,150){\footnotesize{$L_2$}}
        \put(0,400){\rotatebox{270}{\footnotesize{ max$\left(\psi_\ast\right)$}}}
        \put(500,0){\footnotesize{$p$}}
        \end{overpic}}
    \caption{Section of the bifurcation diagram of system (S0) under the assumptions $Ra=4\;10^4$, $\kappa=+\infty$, $L=5$, $\nu=-0.2$ and $\delta=0.5$, for parameter $p$. The skewed southward sinking solution, SS-solution, exists and is stable for $p<\lambda\approx 1.03$. The bifurcation threshold on $p=\lambda$ is associated to a saddle-node bifurcation.\\
    The system is multistable for $p$ close to $\lambda$. In the figure we indicate the salinity-dominated stable solution, SA-solution.\\
    The lines are obtained with the Python library Transiflow \cite{transiflow} through the pseudo-arclength continuation method and uniform resolution grid $(\mathtt{M},\mathtt{N})=(30,60)$.}
    \label{fig:bif_saddle}
\end{figure}

\begin{example}
We consider the model (S1) for parameters $L=5$, $\nu=-0.2$, $\delta=0.5$, $Ra=4\;10^4$. For such values, the surface boundary conditions \eqref{bound3} on $S$ are not symmetric with respect to the mid-axis. We also consider the limit case $\kappa=+\infty$, thus enforcing heterogeneous Dirichlet boundary conditions on $T$. Under such assumption, part of the bifurcation diagram in Figure \ref{fig:bif_saddle} shows a saddle-node bifurcation at $L_1=\lambda\approx1.03$.

\begin{figure}[h!]
    \centering
    \subfloat[Streamfunction $\psi_\text{SS}$]{\begin{overpic}[scale=0.22]{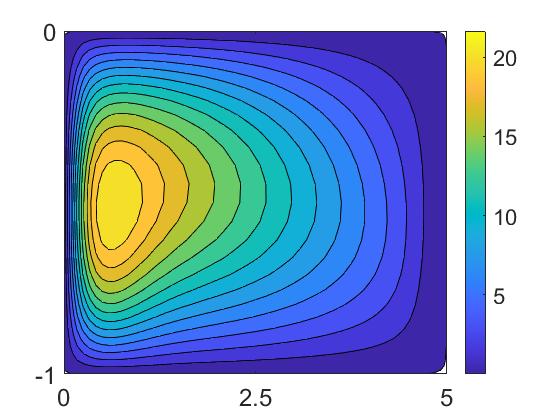}
    \put(-20,380){\rotatebox{270}{\footnotesize{$x_1$}}}
    \put(530,-20){\footnotesize{$x_2$}}
    \end{overpic}}
    \hspace{0mm}
    \subfloat[Vorticity $\omega_\text{SS}$]{\begin{overpic}[scale=0.22]{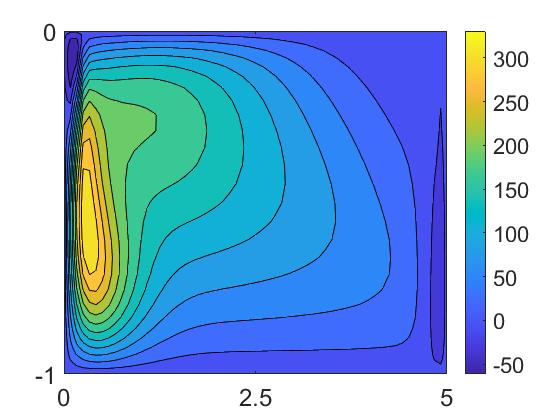}
    \put(-20,380){\rotatebox{270}{\footnotesize{$x_1$}}}
    \put(530,-20){\footnotesize{$x_2$}}
    \end{overpic}}
    \hspace{0mm}
    \subfloat[Temperature $T_\text{SS}$]{\begin{overpic}[scale=0.22]{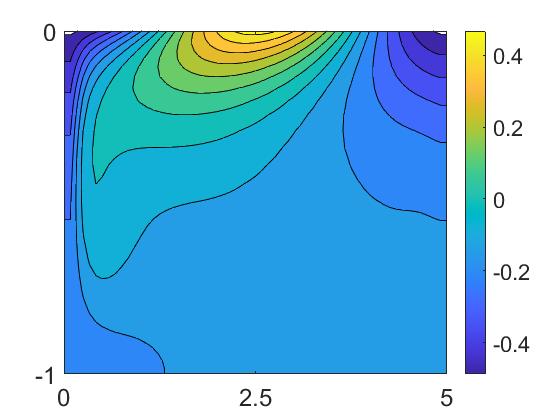}
    \put(-20,380){\rotatebox{270}{\footnotesize{$x_1$}}}
    \put(530,-20){\footnotesize{$x_2$}}
    \end{overpic}}
    \hspace{0mm}
    \subfloat[Salinity $S_\text{SS}$]{\begin{overpic}[scale=0.22]{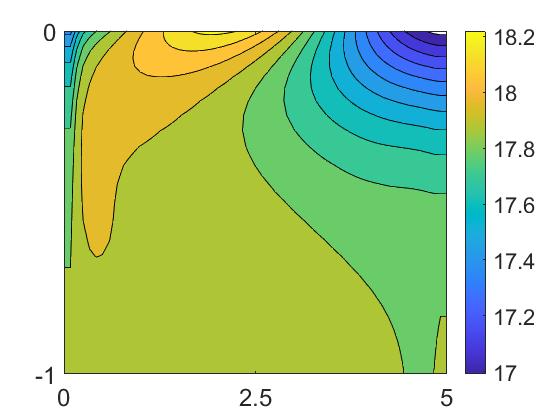}
    \put(-20,380){\rotatebox{270}{\footnotesize{$x_1$}}}
    \put(530,-20){\footnotesize{$x_2$}}
    \end{overpic}}
    
    \vspace{0mm}
    
    \subfloat[Streamfunction $\psi_\text{SA}$]{\begin{overpic}[scale=0.22]{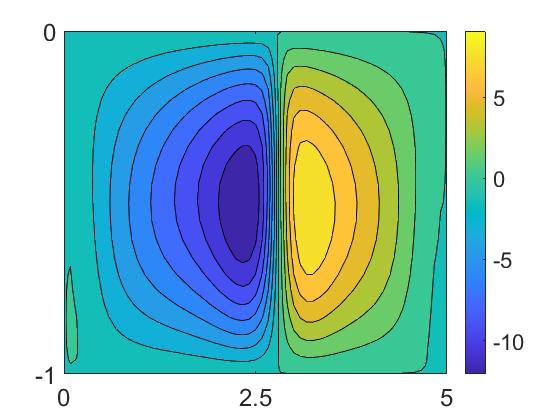}
    \put(-20,380){\rotatebox{270}{\footnotesize{$x_1$}}}
    \put(530,-20){\footnotesize{$x_2$}}
    \end{overpic}}
    \hspace{0mm}
    \subfloat[Vorticity $\omega_\text{SA}$]{\begin{overpic}[scale=0.22]{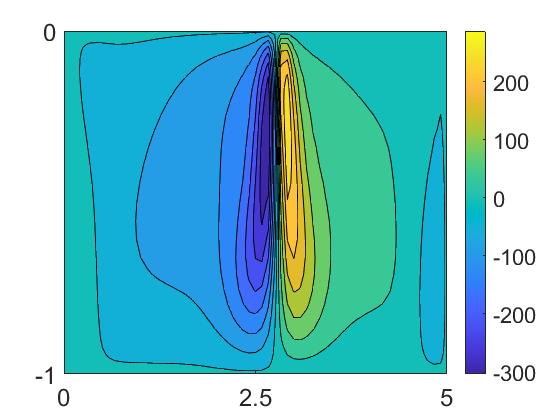}
    \put(-20,380){\rotatebox{270}{\footnotesize{$x_1$}}}
    \put(530,-20){\footnotesize{$x_2$}}
    \end{overpic}}
    \hspace{0mm}
    \subfloat[Temperature $T_\text{SA}$]{\begin{overpic}[scale=0.22]{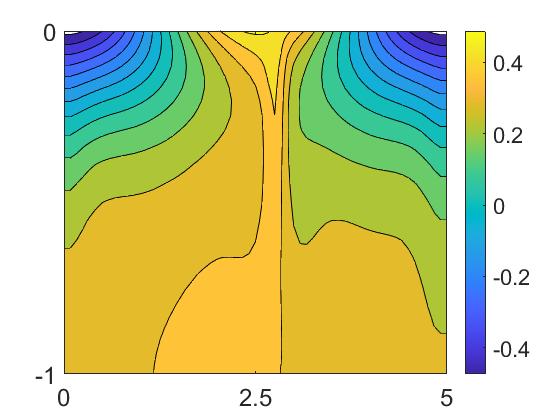}
    \put(-20,380){\rotatebox{270}{\footnotesize{$x_1$}}}
    \put(530,-20){\footnotesize{$x_2$}}
    \end{overpic}}
    \hspace{0mm}
    \subfloat[Salinity $S_\text{SA}$]{\begin{overpic}[scale=0.22]{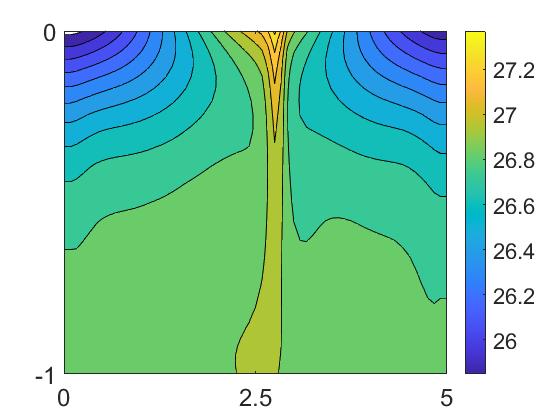}
    \put(-20,380){\rotatebox{270}{\footnotesize{$x_1$}}}
    \put(530,-20){\footnotesize{$x_2$}}
    \end{overpic}}
    \caption{The first row of contour plots presents the simulation of the skewed sinking southward solution $(\psi_\text{SS},\omega_\text{SS},T_\text{SS},S_\text{SS})$ of the system (S0) and assumptions $Ra=4\;10^4$, $\kappa=+\infty$, $L=5$, $\nu=-0.2$ and $\delta=0.5$. We fix $p=1$, therefore less than the saddle-node bifurcation value. The second row displays the salinity-dominated stable solution $(\psi_{\text{SA}},\omega_{\text{SA}},T_{\text{SA}},S_{\text{SA}})$ at $p=1$, under equivalent assumptions.}
    \label{fig:solutions_second}
\end{figure}

Similarly to the previous example, we study a stable solution of (S0), the skewed sinking southward solution $(\psi_\text{SS},\omega_\text{SS},T_\text{SS},S_\text{SS})$ displayed in the first row of Figure \ref{fig:solutions_second}, as $(\psi_{\ast},\omega_{\ast},T_{\ast},S_{\ast})$. We also observe early-warning signs able to predict the approach of $p$ to $\lambda$ from the behaviour of solutions of the corresponding linearized system (S2). The salinity-dominated solution \cite{dijkstra1997symmetry}, shown in the second row of Figure \ref{fig:solutions_second}, is another stable solution for $p$ in a neighbourhood of $\lambda$. The early-warning signs are, hence, able to predict the jump of a solution of the system (S1) to the proximity of such equilibrium. Yet, the fact that the bifurcation is of saddle-node type implies that, under noise, the jump may happen before the threshold approach. The reliability of the signs is therefore dependent on the noise intensity, which is known to affect the probability of metastable jump before a given time \cite{berglund2023stochastic,bernuzzi2023bifurcations}.

\begin{figure}[h!]
    \centering
    \subfloat{\begin{overpic}[scale=0.5]{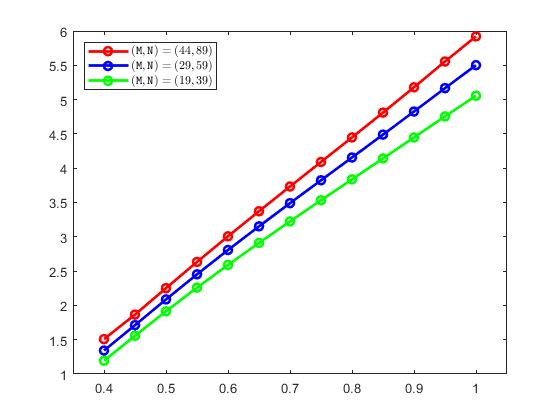}
    \put(500,10){\footnotesize{$p$}}
    \put(10,500){\rotatebox{270}{\footnotesize{$\operatorname{Re}\left(\lambda_2^{(p)}-\lambda_4^{(p)}\right)$}}}
    \end{overpic}}
    \caption{Plot of $\operatorname{Re}\left(\lambda_2^{(p)}-\lambda_4^{(p)}\right)$, for $\lambda_2^{(p)}$ and $\lambda_4^{(p)}$ respectively the second and fourth eigenvalues of $A_0(\psi_\text{SS},\omega_\text{SS},T_\text{SS},S_\text{SS})$ with highest real part for various resolution grid choices. In green the resolution is assumed to be $(\mathtt{M},\mathtt{N})=(19,39)$, in blue to be $(\mathtt{M},\mathtt{N})=(29,59)$ and in red to be $(\mathtt{M},\mathtt{N})=(44,89)$.}
    \label{fig:diff_eigs_second}
\end{figure}

The four eigenvalues of the matrix $A_0(\psi_\text{SS},\omega_\text{SS},T_\text{SS},S_\text{SS})$ with largest real part, for the studied values of $p$, are $\lambda_1^{(p)}=0$, $\lambda_2^{(p)}\neq\overline{\lambda_2^{(p)}}=\overline{\lambda_3^{(p)}}$ and $\lambda_4^{(p)}\in\mathbb{R}$. Figure \ref{fig:diff_eigs_second} displays $\operatorname{Re}\left(\lambda_2^{(p)}-\lambda_4^{(p)} \right)$ for different non-uniform resolution grids, constructed as in the previous example. It is apparent that for $p>0.4$, the difference increases with $p$ and the resolution of the chosen grids. In particular, the difference strays from $0$. This justifies the application of the results of Theorem \ref{thm:general}, Theorem \ref{thm:second}, and Corollary \ref{cor:autocorrelation} on the linearized stochastic system (S2) and the observation of its temporal autocovariance and temporal autocorrelation for long time intervals \cite{KU2}. \\
Figure \ref{fig:rate_bous_second} is obtained from $5$ sample solutions of the approximated linearized system (S2), final time ${t_\text{end}}=10^4$ and the implicit Euler finite difference method, as in the previous example. Figure \ref{fig:10a} and Figure \ref{fig:10b} display in blue respectively $\text{log}_{10}\left(-\frac{1}{\operatorname{Re}(\lambda_2^{(p)})}\right)$ and $\text{log}_{10}\left(-\frac{1}{\operatorname{Re}(\lambda_4^{(p)})}\right)$. Such lines are compared with the mean logarithm of the temporal variance of the solutions projected on the corresponding eigenfunctions of $A_0(\psi_\text{SS},\omega_\text{SS},T_\text{SS},S_\text{SS})^*$, in red. Figure \ref{fig:10a} presents a similar behaviour of the two lines for $p$ close to $\lambda$, aside from the difference of a constant. Such similarity is yet affected by the shape of the noise and of the corresponding eigenfunction, as described in \eqref{eq:general_EWS}. This effect can be observed in Figure \ref{fig:10b}. In such a case, the red line does not indicate divergence of the early-warning sign, similarly to the blue line. 

\begin{figure}[h!]
    \centering
    \subfloat[Comparison of variance and expected scaling law along a sensible direction.]{\begin{overpic}[scale=0.45]{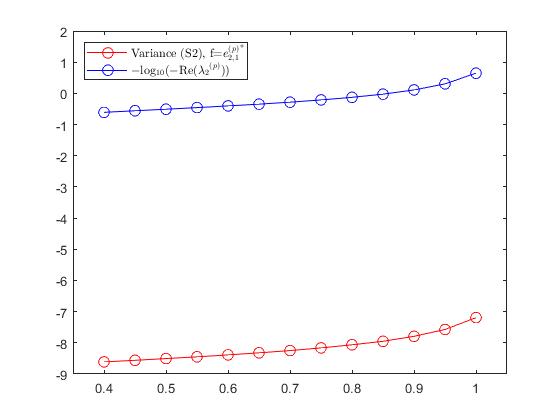}
        \put(500,10){\footnotesize{$p$}}
        \put(10,500){\rotatebox{270}{\footnotesize{$\text{log}_{10}\left(\left\langle f, V_{t_\text{end}} f \right\rangle\right)$}}}
    \end{overpic}
    \label{fig:10a}}
    \hfill
    \subfloat[Comparison of variance and expected scaling law along a not sensible direction.]{\begin{overpic}[scale=0.45]{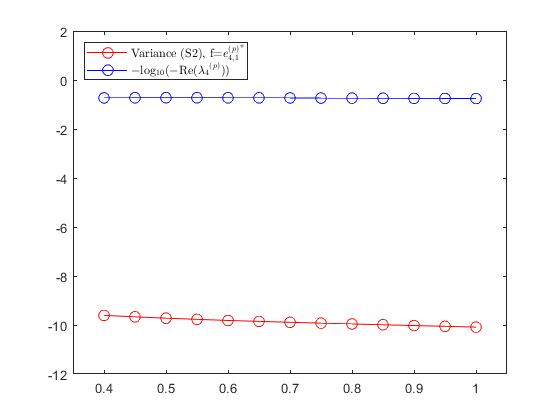}
        \put(500,10){\footnotesize{$p$}}
        \put(10,500){\rotatebox{270}{\footnotesize{$\text{log}_{10}\left(\left\langle f, V_{t_\text{end}} f \right\rangle\right)$}}}
    \end{overpic}
    \label{fig:10b}}
    \caption{Figure \ref{fig:10a} and Figure \ref{fig:10b} show in blue $\text{log}_{10}\left(-\frac{1}{\operatorname{Re}\left(\lambda_2^{(p)}\right)}\right)$ and $\text{log}_{10}\left(-\frac{1}{\operatorname{Re}\left(\lambda_4^{(p)}\right)}\right)$ respectively, for $\lambda_2^{(p)}$ and $\lambda_4^{(p)}$ the second and fourth eigenvalue of $A_0(\psi_\text{SS},\omega_\text{SS},T_\text{SS},S_\text{SS})$ with highest real part. The red lines refer to the mean logarithm of the temporal variance, for time ${t_\text{end}}=10^4$ and $\tau=0$, of solutions of the numerically approximated linearized system (S2), projected on ${e_{2,1}^{(p)}}^*$, in Figure \ref{fig:10a}, and on ${e_{4,1}^{(p)}}^*$, in Figure \ref{fig:10b}. Such lines are obtained from $5$ run samples, which are different from noise realization. 
    }
    \label{fig:rate_bous_second}
\end{figure}
In Figure \ref{fig:eigs_second}, the shape of the simple eigenfunction ${e_{2,1}^{(p)}}^*$ of $A_0(\psi_\text{SS},\omega_\text{SS},T_\text{SS},S_\text{SS})^*$ is displayed for $p=1$. It generates the corresponding generalized eigenspace $E_2(p)^*$, for any observed $p$, and $M_2=1$.\\
For any $p\leq\lambda$, we assume the sequences $\left\{f_1^{(p)}\right\},\left\{f_2^{(p)}\right\}$ continuous in $\mathcal{H}$ and such that the integral of the component on $S$ is $0$, i.e., it is orthogonal to the only generalized eigenfunction, $e_{1,1}^{(p)}$, of $A_0(\psi_\text{SS},\omega_\text{SS},T_\text{SS},S_\text{SS})$ correspondent to $\lambda_1^{(p)}$, as defined in \eqref{eq:e_11}. Under the assumptions of Corollary \ref{cor:autocorrelation}, there exist for any $\delta>0$ the sequences $\left\{g_1^{(p)}\right\}$ and $\left\{g_2^{(p)}\right\}$, generated by a finite number of generalized eigenfunctions of $A_0(\psi_\text{SS},\omega_\text{SS},T_\text{SS},S_\text{SS})^*$, that satisfy \eqref{eq:shadow_cor} and
\begin{equation} \label{eq:EWS_saddle_cov}
    \left\lvert\left\langle g_1^{(p)}, V_\infty^\tau g_2^{(p)}\right\rangle \right\rvert
    = \Theta\left(\operatorname{Re}\left(- \lambda_2^{(p)}\right)^{-1} \right)
\end{equation}
for $p$ that approaches the bifurcation threshold. We can therefore conclude that, although the shape of the function ${e_{2,1}^{(p)}}^*$ is non-trivial, the simple shape of $e_{1,1}^{(p)}$, described in \eqref{eq:e_11}, facilitates the search of other sequences $\left\{g_1^{(p)}\right\}$ and $\left\{g_2^{(p)}\right\}$ that satisfy
\begin{align}
    \begin{alignedat}{2}
    &\left\langle g_1^{(p)}, e_{1,1}^{(p)} \right\rangle = 0 \qquad , \qquad
    &&\left\langle g_2^{(p)}, e_{1,1}^{(p)} \right\rangle = 0 \qquad ,\\
    &\left\langle g_1^{(p)}, e_{2,1}^{(p)} \right\rangle \neq 0 \qquad  \text{and} \qquad
    &&\left\langle g_2^{(p)}, e_{2,1}^{(p)} \right\rangle \neq 0 \qquad ,
    \end{alignedat}
\end{align}
for any $p\leq\lambda$, and, consequently, imply \eqref{eq:EWS_saddle_cov}.

\begin{figure}[h!]
    \centering
    \subfloat[Vorticity $\omega$ in $\operatorname{Re}\left({e_{2,1}^{(1)}}^*\right)$]{\begin{overpic}[scale=0.22]{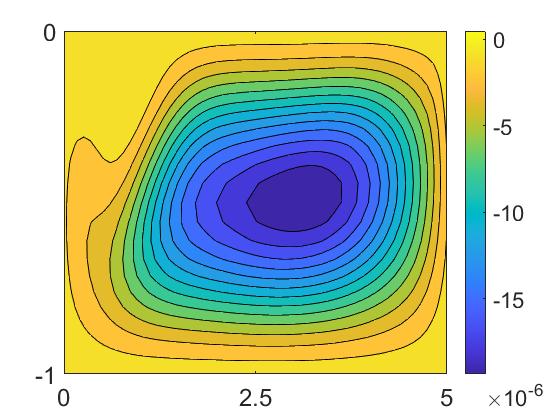}
    \put(-20,380){\rotatebox{270}{\footnotesize{$x_1$}}}
    \put(530,-20){\footnotesize{$x_2$}}
    \end{overpic}}
    \hspace{0mm}
    \subfloat[Temperature $T$ in $\operatorname{Re}\left({e_{2,1}^{(1)}}^*\right)$]{\begin{overpic}[scale=0.22]{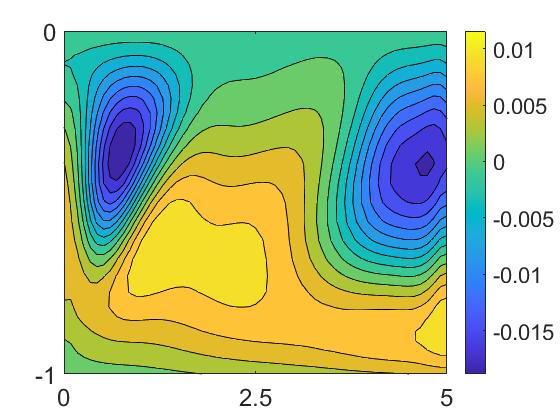}
    \put(-20,380){\rotatebox{270}{\footnotesize{$x_1$}}}
    \put(530,-20){\footnotesize{$x_2$}}
    \end{overpic}}
    \hspace{0mm}
    \subfloat[Salinity $S$ in $\operatorname{Re}\left({e_{2,1}^{(1)}}^*\right)$]{\begin{overpic}[scale=0.22]{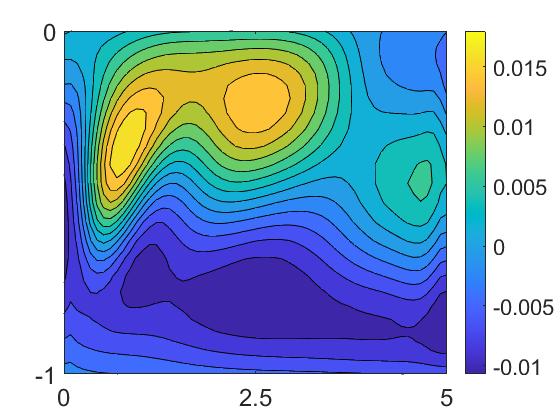}
    \put(-20,380){\rotatebox{270}{\footnotesize{$x_1$}}}
    \put(530,-20){\footnotesize{$x_2$}}
    \end{overpic}}
    
    \vspace{0mm}
    
    \subfloat[Vorticity $\omega$ in $\operatorname{Im}\left({e_{2,1}^{(1)}}^*\right)$]{\begin{overpic}[scale=0.22]{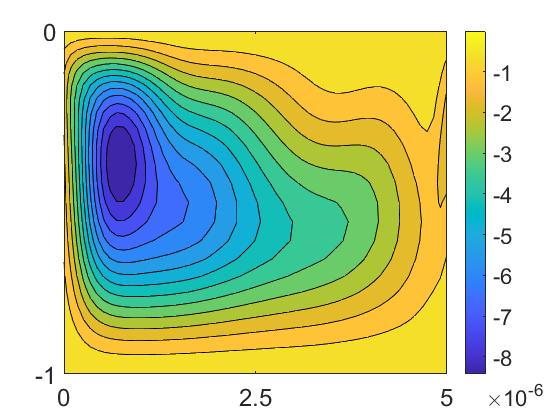}
    \put(-20,380){\rotatebox{270}{\footnotesize{$x_1$}}}
    \put(530,-20){\footnotesize{$x_2$}}
    \end{overpic}}
    \hspace{0mm}
    \subfloat[Temperature $T$ in $\operatorname{Im}\left({e_{2,1}^{(1)}}^*\right)$]{\begin{overpic}[scale=0.22]{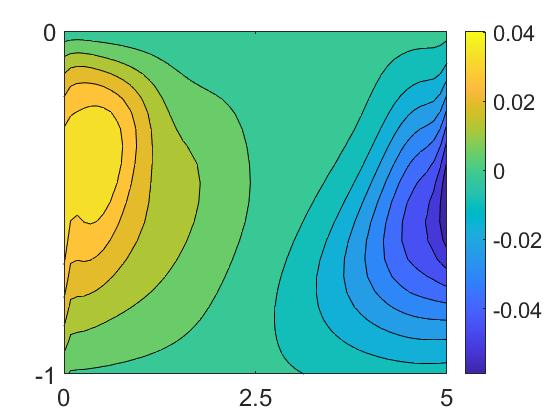}
    \put(-20,380){\rotatebox{270}{\footnotesize{$x_1$}}}
    \put(530,-20){\footnotesize{$x_2$}}
    \end{overpic}}
    \hspace{0mm}
    \subfloat[Salinity $S$ in $\operatorname{Im}\left({e_{2,1}^{(1)}}^*\right)$]{\begin{overpic}[scale=0.22]{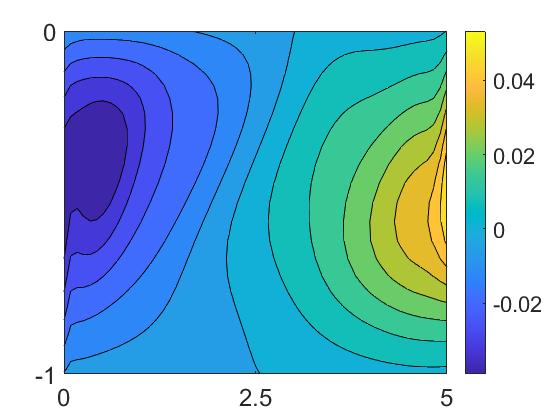}
    \put(-20,380){\rotatebox{270}{\footnotesize{$x_1$}}}
    \put(530,-20){\footnotesize{$x_2$}}
    \end{overpic}}
    \caption{The two rows of contour plots display respectively the real and imaginary parts of the components of the numerical approximations of ${e_{2,1}^{(1)}}^*$. Such function is the unique generalized eigenfunction of $A_0(\psi_\text{SS},\omega_\text{SS},T_\text{SS},S_\text{SS})^*$ associated to $\overline{\lambda_2^{(p)}}$. We label its real and imaginary parts as $\operatorname{Re}\left({e_{2,1}^{(1)}}^*\right)$ and $\operatorname{Im}\left({e_{2,1}^{(1)}}^*\right)$.}
    \label{fig:eigs_second}
\end{figure} 

In Figure \ref{fig:autocorr}, the mean temporal autocorrelation of $5$ solutions of the linearized system (S2) in time $t_{\text{end}}=10^4$ is studied in relation to the lag $0\leq\tau\leq10$ and fixed parameter values $p$. For $i\in\{2,4\}$, the plots refer to the absolute values of the numerical approximation of $\hat{V}_{t_{\text{end}}}^\tau \left({e_{i,1}^{(p)}}^*, {e_{i,1}^{(p)}}^*\right)$, in solid lines, and the absolute values of the function $e^{\overline{\lambda_i^{(p)}}\tau}$, denoted by circles. Corollary \ref{cor:autocorrelation} expects the $L^2$-norm of the differences of $\hat{V}_\infty^\tau \left({e_{i,1}^{(p)}}^*, {e_{i,1}^{(p)}}^*\right)$ and $e^{\overline{\lambda_i^{(p)}}\tau}$ on the interval $\tau\in[0,10]$ to be $0$ for any $0.4\leq p\leq1$. Such numerical errors are of order $10^{-3}$ for all values $i\in\{2,4\}$ and $p\in\{0.4,0.6,0.8,1\}$ reported in the figure.

\begin{figure}[h!]

    \centering

    \subfloat[Comparison of autocorrelation and expected exponential behaviour along a sensible direction.]{\begin{overpic}[scale=0.45]{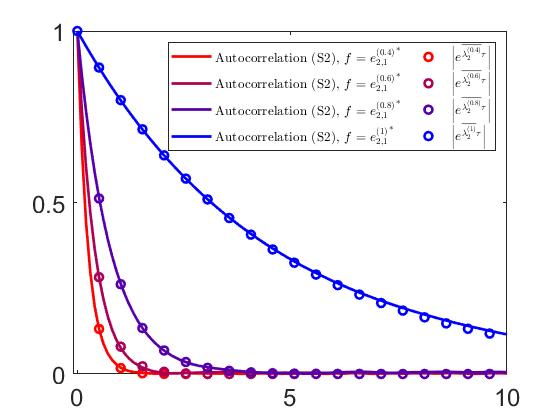}
        \put(590,20){\footnotesize{$\tau$}}
        \put(10,450){\rotatebox{270}{\footnotesize{$\left\lvert\hat{V}_{t_\text{end}}^{\tau}\left(f,f \right)\right\rvert$}}}
    \end{overpic}
    \label{fig:12a}}
    \hfill
    \subfloat[Comparison of autocorrelation and expected exponential behaviour along a not sensible direction.]{\begin{overpic}[scale=0.45]{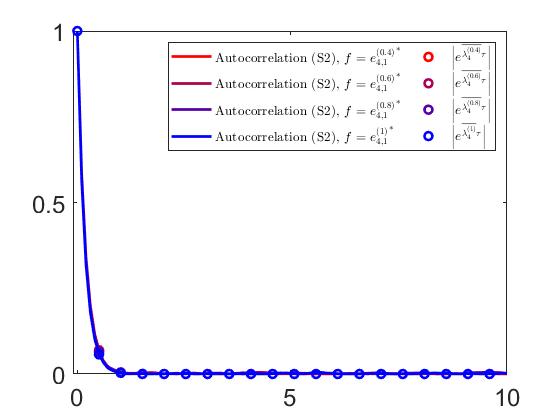}
        \put(590,20){\footnotesize{$\tau$}}
        \put(10,450){\rotatebox{270}{\footnotesize{$\left\lvert\hat{V}_{t_\text{end}}^{\tau}\left(f,f \right)\right\rvert$}}}
    \end{overpic}
    \label{fig:12b}}
    
    \caption{
    The plots are obtained from $5$ solutions, differing from noise samples, of the numerically approximated linearized system (S2) until ${t_\text{end}}=10^4$. The solid lines indicate the absolute value of the mean temporal autocorrelation of the solutions projected on ${e_{i,1}^{(p)}}^*$. In contrast, the circles display the absolute value of $e^{\overline{\lambda_i^{(p)}} \tau}$. The functions are presented with the fixed parameter $p$ and under the dependence of the lag time $\tau$.}
    \label{fig:autocorr}
\end{figure}

\end{example}

\section*{Conclusion and Outlook}

In many applications, stochastic models show solutions that are characterized by sudden critical transitions. The prediction of such events holds particular importance from a theoretical perspective and for the impact on the corresponding real-life phenomena. In this work, we focus on systems affected on the boundary by white noise. We construct two early-warning signs as the time-asymptotic autocovariance and autocorrelation of the solution of the corresponding linearized fast system. We also provide a description of the directions in the square-integrable functions space along which such objects can predict the crossing of a bifurcation threshold, and discuss the observation of the signs in several examples. In detail, under the referred assumptions, we prove that the divergent time-asymptotic autocovariance, along a general choice of functions, is able to forecast the bifurcation in the linearized fast system. Conversely, the time-asymptotic autocorrelation provides a more precise prediction if observed along a restrictive set of direction functions.\\
Particular focus is given to applying the tools to a Boussinesq ocean model, such as examining the projection of the solution on certain eigendirections in the proximity of bifurcation thresholds and the rates adopted by the observables. The study is carried out on a supercritical pitchfork bifurcation and a saddle-node bifurcation. The role of multistability and linearization in the warning of the bifurcation event is examined. The simulations cross-validate the analytic results and provide further insight into the correct practical use of the tools. The results expand the theory of early-warning signs for SPDEs \cite{bernuzzi2023early,bernuzzi2023bifurcations, KU2, KU}. The construction of similar tools under different types of noise may present interesting mathematical implications and increase the applicability of the field \cite{layritz2023early}.

\appendix

\section{Appendix: Properties of the symmetric Boussinesq model.} \label{app:math_appendix}

We fix $\nu=0$, $Ra,L,\kappa>0$ and $p,\delta\in\mathbb{R}$. This appendix presents proof of the fact that for any steady solution of the system (S0), there exists another equilibrium that shares its stability. This is meant in the sense that the operator $A_0$, defined in \eqref{eq:A0_1} and \eqref{eq:A0_2}, applied on each corresponding solution of (S0), has the same spectrum.\\
We introduce the idempotent reflection operator 
\begin{equation*}
  \Upsilon:L^2([-H,0]\times[0,L])\mapsto L^2([-H,0]\times[0,L])\;,
\end{equation*}
such that
\begin{align*}
    &\Upsilon f(x_1,x_2)=f(x_1,L-x_2) \;,
\end{align*}
for any $f\in L^2([-H,0]\times[0,L])$ and $(x_1,x_2)\in[-H,0]\times[0,L]$. For a steady solution $(\psi_1,\omega_1,T_1,S_1)$ of the original system (S0), we define the new proposed solution $(\psi_2,\omega_2,T_2,S_2)$ as follows:
\begin{align*}
    \psi_2=-\Upsilon\psi_1\;,
    \quad \omega_2=-\Upsilon\omega_1\;,
    \quad S_2=\Upsilon S_1\quad \text{and} 
    \quad T_2=\Upsilon T_1 \;.
\end{align*}
Then the following hold:
\begin{align*}
    &\lozenge\qquad\qquad -\frac{\partial \psi_2}{\partial x_1} \frac{\partial \omega_2}{\partial x_2} +\frac{\partial \psi_2}{\partial x_2} \frac{\partial \omega_2}{\partial x_1}+ Pr \Delta \omega_2 + Pr\;Ra \left( \frac{\partial T_2}{\partial x_2} - \frac{\partial S_2}{\partial x_2} \right) \\
    &\;\;\;\qquad\qquad=-\Upsilon\left(-\frac{\partial \psi_1}{\partial x_1} \frac{\partial \omega_1}{\partial x_2} +\frac{\partial \psi_1}{\partial x_2} \frac{\partial \omega_1}{\partial x_1}+ Pr \Delta \omega_1 + Pr\;Ra \left( \frac{\partial T_1}{\partial x_2} - \frac{\partial S_1}{\partial x_2} \right)\right)=0\;,\\
    &\lozenge\qquad\qquad \omega_2 =-\Upsilon \omega_1 = \Upsilon \Delta \psi_1 = - \Upsilon \Delta \Upsilon \psi_2 = -\Delta \psi_2 \;,\\
    &\lozenge\qquad\qquad-\frac{\partial \psi_2}{\partial x_1} \frac{\partial T_2}{\partial x_2} +\frac{\partial \psi_2}{\partial x_2} \frac{\partial T_2}{\partial x_1}+ \Delta T_2 
    = \Upsilon\left(-\frac{\partial \psi_1}{\partial x_1} \frac{\partial T_1}{\partial x_2} +\frac{\partial \psi_1}{\partial x_2} \frac{\partial T_1}{\partial x_1}+ \Delta T_1 \right)=0 \;,\\
    &\lozenge\qquad\qquad -\frac{\partial \psi_2}{\partial x_1} \frac{\partial S_2}{\partial x_2} +\frac{\partial \psi_2}{\partial x_2} \frac{\partial S_2}{\partial x_1}+ Le^{-1} \Delta S_2 
    = \Upsilon\left(-\frac{\partial \psi_1}{\partial x_1} \frac{\partial S_1}{\partial x_2} +\frac{\partial \psi_1}{\partial x_2} \frac{\partial S_1}{\partial x_1}+ Le^{-1} \Delta S_1 \right)=0 \;.
\end{align*}
Such equations are given by $\Upsilon \Delta \Upsilon =\Delta$ under homogeneous Dirichlet boundary conditions. The boundary conditions are satisfied since $Q_S$ and $T_S$ are symmetric functions in respect to $x_2=\frac{L}{2}$.\\
The stability of $(\psi_2,\omega_2,T_2,S_2)$ is studied by showing that $A_0(\psi_2,\omega_2,T_2,S_2)$ shares the spectrum of $A_0(\psi_1,\omega_1,T_1,S_1)$. This is given by the fact that 
\begin{align*}
A_\text{S}(\psi_1,\omega_1,T_1,S_1)=
\begin{pmatrix}
-\Upsilon & 0 & 0 \\ 0 & \Upsilon & 0 \\ 0 & 0 & \Upsilon
\end{pmatrix}
A_\text{S}(\psi_2,\omega_2,T_2,S_2)
\begin{pmatrix}
-\Upsilon & 0 & 0 \\ 0 & \Upsilon & 0 \\ 0 & 0 & \Upsilon
\end{pmatrix}
\end{align*}
and by noting that the operator $\begin{pmatrix}
-\Upsilon & 0 & 0 \\ 0 & \Upsilon & 0 \\ 0 & 0 & \Upsilon
\end{pmatrix}$ is unitary, or a change of basis.

\newpage
\printbibliography

\end{document}